\DeclareMathAlphabet{\mathpzc}{OT1}{pzc}{m}{it}
\DeclareMathOperator*{\esssup}{ess\,sup}
\colorlet{darkblue}{blue!50!black}
\newtheorem{theorem}{Theorem}[section]
\newtheorem{lemma}[theorem]{Lemma}
\newtheorem{proposition}[theorem]{Proposition}
\newtheorem{definition}[theorem]{Definition}
\newtheorem{remark}[theorem]{Remark}
\newtheorem{hypothesis}[theorem]{Hypothesis}
\let\originalleft\left
\let\originalright\right
\renewcommand{\left}{\mathopen{}\mathclose\bgroup\originalleft}
\renewcommand{\right}{\aftergroup\egroup\originalright}
\renewcommand{\d}{\/\mathrm{d}\/}
\def\L{\mathbb{L}}
\def\A{\mathrm{A}}
\def\I{\mathrm{I}}
\def\C{\mathrm{C}}
\def\f{\boldsymbol{f}}
\def\D{\mathrm{D}}
\def\y{\mathbf{y}}
\def\q{\boldsymbol{q}}
\def\Y{\mathbf{Y}}
\def\Z{\boldsymbol{Z}}
\def\E{\mathbb{E}}
\def\x{\boldsymbol{x}}
\def\g{\boldsymbol{g}}
\def\p{\boldsymbol{p}}
\def\y{\boldsymbol{y}}
\def\z{\boldsymbol{z}}
\def\v{\boldsymbol{v}}
\def\W{\mathrm{W}}
\def\V{\mathbb{V}}
\def\R{\mathbb{R}}
\def\wi{\widetilde}
\def\U{\mathbb{U}}
\def\a{\boldsymbol{a}}
\def\H{\mathbb{H}}
\renewcommand{\d}{\/\mathrm{d}\/}
\newcommand{\Addresses}{{
		\footnote{
			
		\noindent \textsuperscript{1,2}Department of Mathematics, Indian Institute of Technology Roorkee-IIT Roorkee,
		Haridwar Highway, Roorkee, Uttarakhand 247667, INDIA.\par\nopagebreak
		\noindent  \textit{e-mail:} \texttt{Manil T. Mohan: maniltmohan@ma.iitr.ac.in, maniltmohan@gmail.com}
		
		\textit{e-mail:} \texttt{Sagar Gautam: sagar\_g@ma.iitr.ac.in, sagargautamkm@gmail.com}
		
		\noindent \textsuperscript{*}Corresponding author.

			\textit{Key words:} convective Brinkman-Forchheimer equations, tamed Navier-Stokes equations, viscosity solutions, Hamilton-Jacobi-Bellman equations, dynamic programming principle. 
			
			Mathematics Subject Classification (2010): Primary 35D40, 49L25; Secondary 76B75, 76D05

}}}
\begin{document}
	
	
	\title[Viscosity solutions of 2D and 3D CBF equations]{Optimal control of convective Brinkman-Forchheimer equations: Dynamic programming equation and Viscosity solutions
		\Addresses}
		\author[S. Gautam and M. T. Mohan]
	{Sagar Gautam\textsuperscript{1} and Manil T. Mohan\textsuperscript{2*}}

	\maketitle
   	
	\begin{abstract}
It has been pointed out in the work [F. Gozzi et.al., \emph{Arch. Ration. Mech. Anal.} {163}(4) (2002), 295--327] that the existence and uniqueness of viscosity solutions to the first-order Hamilton-Jacobi-Bellman equation (HJBE) associated with the three-dimensional Navier-Stokes equations (NSE) have not been resolved due to the lack of global solvability and continuous dependence results. However, by adding a damping term to NSE, the so-called \emph{damped Navier-Stokes equations} fulfills the  requirement of existence and uniqueness of global strong solutions. In this work, we address this issue in the context of the following two- and three-dimensional convective Brinkman-Forchheimer (CBF) equations (damped NSE)  in $\mathbb{T}^d,\ d\in\{2,3\}$: 
	\begin{align*}
		\frac{\partial\mathpzc{U}}{\partial t}-\mu \Delta\mathpzc{U}+(\mathpzc{U}\cdot\nabla)\mathpzc{U}+\alpha\mathpzc{U}+\beta|\mathpzc{U}|^{r-1}\mathpzc{U}+\nabla p=\boldsymbol{f}, \ \nabla\cdot\mathpzc{U}=0, 
	\end{align*}
	where $\mu,\alpha,\beta>0$, $r\in[1,\infty)$. 
	We study the global well-posedness of the infinite-dimensional first-order HJBE arising from an optimal control problem for CBF equations. Employing the dynamic programming approach, we prove the existence and uniqueness of viscosity solutions in both two and three-dimensions.   
	We first prove the existence of a viscosity solution to the infinite-dimensional HJBE in the supercritical regime. For spatial dimension $d=2$, we consider the nonlinearity exponent $r\in(3,\infty)$, while for $d=3$, due to some technical difficulty, we focus on $r\in(3,5]$. In the case $r=3$, we require the condition $2\beta\mu\geq 1$ for both $d=2$ and $d=3$. Next, we derive a comparison principle for the HJB equation covering the ranges $r\in(3,\infty)$ and $r=3$ with $2\beta\mu\geq 1$ in $d\in\{2,3\}$. It ensures the uniqueness of the viscosity solution. 
	\end{abstract}

	
	\section{Introduction}\setcounter{equation}{0}
This study employs a dynamic programming method to analyse the Hamilton-Jacobi-Bellman (or dynamic programming) equation that arises in the optimal control problem of turbulent flows. This control problem has numerous industrial and engineering applications (see \cite{MDG,SSS}). In the context of this work, fluid flows refer to those described by the convective Brinkman-Forchheimer (CBF) equations, also known as  damped Navier-Stokes equations (NSE).  In the high Reynolds number regime, where turbulence appears in NSE, the CBF equations can help to stabilize numerical solutions. These equations characterize the motion of an incompressible fluid flowing through a saturated porous medium.
 First, we provide the mathematical formulation of CBF equations. Let us denote the initial and terminal time by $t$ and $T$, respectively, with $t\in[0,\infty)$ and $T\in[t,\infty)$. We denote by $\mathbb{T}^{d}=\big(\R/\mathbb{Z}\big)^{d}$, a $d$-dimensional torus with dimension $d\in\{2,3\}$. 
Then the unknowns are the velocity vector field $\mathpzc{U}(\cdot):[t, T]\times\mathbb{T}^d\to\R^d$ (the state variable) and pressure $p(\cdot):[t, T]\times\mathbb{T}^d\to\R$ which satisfies the following set of equations (known as CBF equations):

\begin{equation}\label{CBF}
	\left\{
	\begin{aligned}
		\frac{\partial\mathpzc{U}}{\partial s}-\mu \Delta\mathpzc{U}+(\mathpzc{U}\cdot\nabla)\mathpzc{U}+\alpha\mathpzc{U}+\beta|\mathpzc{U}|^{r-1}\mathpzc{U}+\nabla p&=\g(s,\a(s)), \ \text{ in } \ (t,T]\times\mathbb{T}^{d}, \\ \nabla\cdot\mathpzc{U}&=0, \ \text{ in } \ [t,T]\times\mathbb{T}^{d}, \\
		\mathpzc{U}(t)&=\mathpzc{U}_0 \ \text{in} \ \mathbb{T}^{d},\\
		\int_{\mathbb{T}^d} p(x,t)\d x&=0, \ \text{ in } \ (t,T),
	\end{aligned}
	\right.
\end{equation}
where $\a(\cdot):[0, T]\to\Theta$ is a given measurable function that acts as a control taking values in a complete metric space $\Theta$ (the control set), and $\g(\cdot,\cdot):[0, T]\times\Theta\to\R^d$ represents an external forcing. The final condition in \eqref{CBF} is included to ensure the uniqueness of the pressure.
 Furthermore, the velocity field $\mathpzc{U}(\cdot,\cdot)$ and pressure $p(\cdot,\cdot)$ are subject to the following periodic boundary conditions:
\begin{align}\label{pbc}
	\mathpzc{U}(x+e_{i},\cdot) = \mathpzc{U}(x,\cdot)  \ \text{and}  \ p(x+e_{i},\cdot) = p(x,\cdot),
\end{align}
for all $x\in\R^{d}$ and $i=1,\ldots,d,$ where $\{e_i\}_{i=1}^d$ denotes the standard basis of $\R^{d}.$ The constant $\mu>0$ is referred to as the \emph{Brinkman coefficient} and represents the effective viscosity. The constants $\alpha>0$ and $\beta>0$ represents the \emph{Darcy} and \emph{Forchheimer} coefficients, modelling the permeability of porous medium and nonlinear drag due to porosity, respectively. The term $\beta|\mathpzc{U}|^{r-1}\mathpzc{U}$, where $r\geq1$, is commonly referred to as the absorption or damping term, and the parameter $r$ is known as the absorption exponent (cf. \cite{SNA}). The case $r=3$ is called the \emph{critical exponent} because, when $\alpha=0$, the CBF equations exhibit the same scaling properties as the classical NSE (\cite[Proposition 1.1]{KWH}). The case when $r<3$ is referred to as \emph{subcritical}, while those with $r>3$ are called \emph{supercritical}. The supercritical case is also associated with the fast-growing nonlinearities. When both $\alpha$ and $\beta$ are set to zero, the system of equations described in \eqref{CBF} simplifies to the classical NSE.

   \subsection{Literature survey}
  We will now explore the existing literature on the CBF equations and the viscosity solutions of the HJBE related to fluid dynamic models. 
 
      The existence of at least one weak solution of 3D NSE was first established by Leary and Hopf (cf. \cite{Hopf, Leray}). However, the question of uniqueness for such solutions remains a major problem for the mathematical community. To address this, several mathematicians have proposed modifications to the 3D NSE (cf. \cite{SNA, SNA1, ZCQJ, KT2}). In \cite{SNA, SNA1}, the authors proposed a modified version of the NSE by incorporating an absorption term $\beta|\mathpzc{U}|^{r-1}\mathpzc{U}$, where $r>0$. They proved the existence of a weak solution for all $d\geq2$, and established uniqueness in dimension $d=2$. In the literature, the classical NSE modified by the damping term $\alpha\mathpzc{U}+\beta|\mathpzc{U}|^{r-1}\mathpzc{U}$ are commonly referred to as the CBF equations (cf. \cite{SMTM,KWH}). The authors in \cite{MTT} studied the 3D NSE with damping $\beta|\mathpzc{U}|^{r-1}\mathpzc{U}$ alongside with a pumping term  $\gamma|\mathpzc{U}|^{q-1}\mathpzc{U}$, but without linear damping $\alpha\mathpzc{U}$. They proved the existence of weak solutions for the case $r>q$ and established uniqueness when $r>3$. Furthermore, they showed the global existence of a unique strong solution for $r>3$, assuming the initial in $\mathbb{H}^1$. As with the 3D NSE, the question of the existence of a unique global (in time) weak solution for the 3D CBF equations remains open for $r\in[1,3)$ with any $\beta,\mu>0$, and also for the case $r=3$ when $2\beta\mu<1$.
	
The primary goal of this work is to investigate the infinite-dimensional HJBE of first order related to an optimal control problem for the CBF equations in two and three dimensions using a dynamic programming approach. Such an optimal control problem arises from the need to reduce turbulence in fluid flows (see \cite{SSS}). Turbulence, characterized by chaotic and irregular motion, can be quantified using enstrophy.
The enstrophy of the flow is an important quantity which determines the rate of dissipation of kinetic energy (see \cite{FMRT}). It is defined as
\begin{align*}
	\mathcal{E}(\mathpzc{U}):=\int_{\mathbb{T}^d} |\nabla\mathpzc{U}(x)|^2\d x.
\end{align*}
For the incompressible flow, one can express the enstrophy in the following form:
\begin{align*}
	\mathcal{E}(\mathpzc{U}):=\int_{\mathbb{T}^d} |\boldsymbol{\omega}(x)|^2\d x,
\end{align*}
where $\boldsymbol{\omega}:=\nabla\times\mathpzc{U}$ is the vorticity vector.

The HJBE is a nonlinear partial differential equation of first-order that generally does not have a smooth solution, even in finite-dimension (see \cite[Example 8.1, Chapter 8]{ABBP}). Therefore, mathematicians approach these problems by looking into the non-smooth or generalized solution, usually the solution in the Sobolev space $\W^{1,\infty}_{loc}$, which satisfies the HJBE in almost everywhere sense. 
In this direction, plenty of literature is available concerning a solution for such an equation; for instance, see \cite{SHB,AD,WHF,AVF,PLL}. 
Unfortunately, this solution concept is too weak and is not necessarily unique (see \cite[Chapter 8]{ABBP} for counterexamples). 

To overcome such difficulties, Crandall and Lions introduced the notion of viscosity solution in \cite{MGL}. They proved the existence of a unique viscosity solution under various hypotheses. Later, Crandall, Evans and Lions in \cite{MGEL} systematically explore several equivalent formulations of this notion in a simplified form. Lions first established in \cite{PLL} that the dynamic programming principle in the optimal control problem implies that the value function serves as the viscosity solution to the associated HJBE. This fact, combined with the uniqueness of the viscosity solution, provides a complete characterization of the value function. It is important to note that the uniqueness of the viscosity solution discussed in the works \cite{MGL, MGEL} pertains to bounded continuous functions. However, the value function may be unbounded in the context of optimal control problems. Consequently, establishing the uniqueness of the viscosity solution in these cases is more complicated. Ishii first addressed this in \cite{Hsh1} (see also \cite{MGHL1}), where he resolved this issue by establishing the comparison principle. Consequently, this proves the uniqueness of the viscosity solution, which is uniformly continuous in the spatial variable (and so possibly unbounded). Further, the authors in \cite{MGL6} obtained the existence and uniqueness of viscosity solutions in this class by using the tool of modulus of continuity of the solutions. 

There has been growing interest in and an expanding literature on the HJBE in the infinite-dimensional space in recent years. The analysis of the HJBE in infinite dimensions is more challenging because several mathematical arguments do not hold, unlike in finite dimensions. Barbu and Da Prato pioneered the foundational work in this area (for instance, see \cite{VBGDP}). They approached the problem using the semigroup and perturbation techniques within the specific class of convex functions. Later, Crandall and Lions developed the theory of viscosity solutions for infinite-dimensional HJBE with bounded Hamiltonian in a series of articles \cite{MGL1, MGL2, MGL3}. 
The existence and uniqueness of viscosity solutions for the HJB equation
with an unbounded Hamiltonian are proven in \cite{MGL4} and further explored in \cite{DT1, DT2, DT3}. Due to the non-smooth nature of these solutions, deriving an explicit feedback formula for optimal control remains a challenge.
However, in the context of optimal control  problems related to two-dimensional CBF equations, the author in \cite{TiOp2, Op1, Op3}  obtained the existence of  optimal controls and established the first-order necessary optimality conditions. Interested readers can refer \cite{MBCD} for the comprehensive study of first order HJBE and viscosity solution. 
 
Let us compare our work from \cite{FGSSA1} and \cite{SSS1}.
The HJBE for NSE involves the unbounded and bilinear operators arising from the diffusion and convective terms (see \eqref{CBF}), respectively. Consequently, finding a viscosity solution for such an HJBE is more challenging. As noted in \cite{SSS1}, the question of global solvability for the infinite-dimensional HJBE related to the NSE remained unsolved. This problem was first addressed in \cite{FGSSA1} but only in the two-dimensional case. However, the solvability of HJBE associated with 3D-NSE is not yet resolved due to the lack of global well-posedness results for a strong solution. In the current work, we are focusing on CBF equations, which include an absorption term $|\mathpzc{U}|^{r-1}\mathpzc{U}$ (as outlined in \eqref{CBF}), which significantly complicates the system and making the associated HJBE much more challenging to solve than those related to the NSE, which omits this term. 
  However, in dimensions $d\in\{2,3\}$, for supercritical case $(r>3)$ and for critical case $r=3$ with $2\beta\mu\geq1$, the absorption term $|\mathpzc{U}|^{r-1}\mathpzc{U}$, along with the  diffusion term (that is, $-\Delta\mathpzc{U}$), dominates the convective nonlinearity (see Remark \ref{BLrem}). This dominance ensures the global solvability of the CBF equations, allowing us to extend the findings of \cite{FGSSA1} to the 3D-CBF equations.

%

\subsection{Difficulties, strategies and approaches}\label{sub-tech}
Unlike the case of the whole space or a periodic domain, working with a bounded domain presents additional challenges. One of the main difficulties is that the term $\mathcal{P}(|\mathpzc{U}|^{r-1}\mathpzc{U})$  generally does not vanish on the boundary. Moreover, the Helmholtz-Hodge projection operator $\mathcal{P}$ does not necessarily commute with the Laplacian $-\Delta$ (see \cite{JCR}). Therefore, the identity
\begin{align}\label{toreq}
	&\int_{\mathcal{O}}(-\Delta\mathpzc{U}(\xi))\cdot|\mathpzc{U}(\xi)|^{r-1}\mathpzc{U}(\xi)\d \xi \nonumber\\&=\int_{\mathcal{O}}|\nabla\mathpzc{U}(\xi)|^2|\mathpzc{U}(\xi)|^{r-1}\d \xi+ \frac{r-1}{4}\int_{\mathcal{O}}|\mathpzc{U}(\xi)|^{r-3}|\nabla|\mathpzc{U}(\xi)|^2|^2\d \xi,
\end{align}
may not be directly applicable in the bounded domains. However, in $\mathbb{T}^d$, the projection $\mathcal{P}$ and the Laplacian $-\Delta$ do commute (cf. \cite[Theorem 2.22]{JCR}), making the identity \eqref{toreq} useful in deriving regularity results.

In this work, our significant difficulty is establishing the existence and uniqueness of the viscosity solutions, identified with value function, of the associated first-order HJBE for the optimal control problem of CBF equations \eqref{CBF}. The existence of a viscosity solution followed by using the dynamic programming techniques of optimal control problems; meanwhile, the uniqueness follows from the comparison principle. We follow the approach of the works \cite{FGSSA, FGSSA1}.

While proving the existence of viscosity solution, the main challenge is demonstrating the \emph{supersolution inequality}. Unlike NSE, we have to deal with the extra terms arising from convective and absorption terms in the CBF model \eqref{CBF}. 
 As compared with \cite{FGSSA, FGSSA1}, the following are the main mathematical issues that we have to face additionally:
\begin{itemize}
	\item We first emphasize that the definition of the test function (Definition \ref{testD}) considered here, slightly modified from the one considered in \cite{FGSSA}, plays an essential role in the analysis of our work.
	\item The authors in \cite{FGSSA} consider the $\|\cdot\|_{\V}-$norm in the definition of the test function for viscosity solution. Due to the zero average condition,  the $\|\cdot\|_{\V}-$norm is equivalent to $\|\nabla\cdot\|_{\H}-$ norm (see Subsection \ref{zerofunc} for the definition of $\H$ and $\V$-space). Meanwhile, for CBF equations zero-average condition does not hold in $\mathbb{T}^d$ (see Subsection \ref{zerofunc} on functional setting), and therefore, we have to deal with
	 the full $\V-$norm, that is, $\|\z\|_{\V}^2=\|\z\|_{\H} ^2 + \|\nabla\z\|_{\H}^2$.  
	\item  Therefore, it is essential to note that, while applying the chain rule to the function $\delta(\cdot)\|\Z_\lambda(\cdot)\|_{\V}^2$ (see \eqref{vdp3}-\eqref{vdp3.1}) in the proof of the Theorem \ref{extunqvisc}, the derivative of $\|\cdot\|_{\V}^2-$norm is different from the case of NSE.
	In particular, we have following Fr\'echet derivatives in $\mathbb{T}^d$:
	\begin{align}
		\mathcal{D}_{\z}(\|\z\|_{\V}^2)&=2\mathcal{A}\z, \ \text{ for NSE }, 
		\label{frA}\\
		\mathcal{D}_{\z}(\|\z\|_{\V}^2)&=2(\mathcal{A}+\I)\z, \ \text{ for CBF equations},\label{frAA}
	\end{align}
	where $\mathcal{A}$ is the linear operator defined in \eqref{stokesope} (see Subsection \ref{linope}).
	\item Moreover, in the work \cite{FGSSA1}, authors consider the $\H$-norm in the test function definition. Therefore, the only contribution comes from the Fr\'echet derivative of $\|\cdot\|_{\H}^2$, is the identity operator, that is, $\D_{\z}(\|\z\|_{\H}^2)=2\z$. Therefore, while applying the chain rule, no extra term appears due to the fact \eqref{syymB}.
\end{itemize}

We now mention how the above issues influence the analysis of our work. 
First, since zero average condition does not hold for the CBF system \eqref{CBF}, therefore $\mathcal{A}$ is not invertible as $0$ is an eigenvalue. However, $\mathcal{A}+\I$ is invertible. We use this fact to get
the weak convergence \eqref{wknm2} (see \eqref{wknmdiff}), which is the most important step in the proof of Theorem \ref{extunqvisc}.

As pointed out above, the additional difficulty arising in our work is to handle the following integrals while proving Theorem \ref{extunqvisc} (existence of viscosity solution):
\begin{itemize}
	\item The integral involving bilinear operator $\mathfrak{B}(\cdot)$, that is
	\begin{align}\label{extunqdff1}
	\frac{1}{\lambda}\int_{t_0}^{t_0+\lambda}\delta(s)(\mathfrak{B}(\Z_\lambda(s)),(\mathcal{A}+\I)\Z_\lambda(s))\d s.
	\end{align}
	\item The integral involving nonlinear operator $\mathfrak{C}(\cdot)$, that is
	\begin{align}\label{extunqdff2}
	 \frac{1}{\lambda}\int_{t_0}^{t_0+\lambda}\delta(s)(\mathfrak{C}(\Z_\lambda(s)),(\mathcal{A}+\I)\Z_\lambda(s))\d s.
	\end{align}
\end{itemize}
For NSE on $\mathbb{T}^2$, the terms \eqref{extunqdff1} and \eqref{extunqdff2} do not appear since the condition $(\mathfrak{B}(\Z),\mathcal{A}\Z)=0$ holds (see \cite{FGSSA}). Furthermore, in the work \cite{FGSSA1}, due to the presence of $\H$-norm in the definition of test function, such term does not appear since then we have $(\mathfrak{B}(\Z),\Z)=0$.
 Unfortunately, $(\mathfrak{B}(\Z),\mathcal{A}\Z)\neq0$ in 3D, and therefore, in our case, for CBF equations, we need to address and include the terms \eqref{extunqdff1} and \eqref{extunqdff2}.
 We use the following approach to sort out this difficulty:
 \begin{itemize}
 	\item We can justify the integral \eqref{extunqdff1} as `$\lambda\to0$' with the help of energy estimates \eqref{eqn-conv-1}, \eqref{eqn-conv-2} and continuous dependence estimates \eqref{ctsdep0.1}-\eqref{ctsdep0.2} (see Appendix \ref{wknBA1}) in both two and three-dimensions.
 	\item  Meanwhile, the integral \eqref{extunqdff2} is hard to justify as `$\lambda\to0$' for all $r>3$. This difficulty appears while applying H\"older's inequality to the time integral, and then the exponent $\frac{4(r+1)}{5-r}$ pops up, which forces us to restrict $3\leq r<5$ in three dimensions (see Case-II of Appendix \ref{wknCA}). While in two dimensions, such restriction is not there due to the Sobolev embedding $\V\hookrightarrow\wi\L^{r+1}$ for any $r\geq1$. 
 	\item The case $r=5$ is the most important in our work, and we discussed it separately in Appendix \ref{casere5}. Note that the definition of viscosity solution \ref{viscsoLndef} requires 
 	$\z\in\V_2=\D (\I+\mathcal{A})$. Once we establish this (see the first part of Theorem \ref{extunqvisc}), we then resolve this case by masking the use of energy estimates in $\D(\mathcal{A})$ (Proposition \ref{DAENG}) and the Gr\"onwall's inequality (see Subsection \ref{dar5}). \emph{However, in the case of stochastic CBF, where the associated HJBE is of second-order, we cannot justify the case $r=5$. It is due to the appearance of the expectation in the calculations (while deriving energy estimates) and lack of Gr\"onwall' inequality (see \cite{smtm1}).}
 \end{itemize}

Finally, we mention that the well-posedness results for strong as well as weak solutions of the problem \eqref{CBF} in dimensions $d\in\{2,3\}$, for supercritical case $(r>3)$ and for critical case $r=3$ with $2\beta\mu\geq1$, is known in the literature. 
\begin{table}[ht]
	\begin{tabular}{|c|c|c|c|c|}
		\hline
		\textbf{Case}&Dimension &$ r$& Conditions on 
		$\mu$ \& $\beta$ \\
		\hline
		\textbf{I}&$d=2,3$ &$r\in(3,\infty)$&  for any  
		$\mu>0$ and $\beta>0$  \\
		\hline
		\textbf{II}&$d=2,3$ &$r=3$&for $\mu>0$ and
		$\beta>0$ with $2\beta\mu\geq1$ \\
		\hline
	\end{tabular}
	\vskip 0.1 cm
	\caption{Values of $\mu,\beta$ and $r$ for the comparison principle.}
	\label{Table1}
\end{table}
Due to some technical difficulties (see Subsection \ref{sub-tech} below), we restrict the values of $r$ to prove \emph{the comparison principle} (Table \ref{Table1}) and to prove the \emph{existence of viscosity solutions} (Table \ref{Table2}), which are the main goals of this work.
\begin{table}[ht]
	\begin{tabular}{|c|c|c|c|c|}
		\hline
		\textbf{Case}&Dimension &$ r$& Conditions on 
		$\mu$ \& $\beta$ \\
		\hline
		\textbf{I}&$d=2$ &$r\in(3,\infty)$&  for any  
		$\mu>0$ and $\beta>0$  \\
		\hline
		\textbf{II}&$d=3$ &$r\in(3,5)$&  for any  
		$\mu>0$ and $\beta>0$  \\
		\hline
		\textbf{III}&$d=3$ &$r=5$&for any  
		$\mu>0$ and $\beta>0$ \\
		\hline
	\end{tabular}
	\vskip 0.1 cm
	\caption{Values of $\mu,\beta$ and $r$ for the existence of viscosity solution.}
	\label{Table2}
\end{table}

\subsection{Organization of the paper} 
The remainder of the paper is organised as follows: In the next section, we introduce the functional framework and the necessary function spaces to analyse CBF equations and the associated HJBE. Section \ref{abscon} is devoted to the abstract formulation of the CBF equations \eqref{CBF}, where we define both weak and strong solutions (Definition \ref{def-var-strong}) of the system \eqref{stap}. We derive energy estimates (Proposition \ref{weLLp}) and continuous dependence estimates for the solution $\Z(\cdot)$ of the system \eqref{stap} (Proposition \ref{cts-dep-soln}). In Section \ref{detstchjb}, we formulate the optimal control problem and state the continuous dependence result for the value function and dynamic programming principle (Proposition \ref{valueP}). We then define viscosity solution for the HJBE \eqref{HJBE} (Subsection \ref{viscPR}). Section \ref{compPR} is the core of this work, and we prove the \emph{comparison principle} (Theorem \ref{comparison}). In the last Section \ref{extunqvisc1}, we establish the existence and uniqueness of the viscosity solution for the HJBE \eqref{detHJB1} (Theorem \ref{extunqvisc}).

	\section{Functional Framework}\label{Sec-2}\setcounter{equation}{0}
	In this section, we introduce some necessary function spaces that will be used throughout the paper, along with the linear and nonlinear operators required to obtain the abstract formulation of the CBF system \eqref{CBF}. The functional framework employed here, is adapted from the work \cite{JCR1}.

	\subsection{Function spaces}\label{zerofunc}
	Let us denote by $\C_{\mathrm{p}}^{\infty}(\mathbb{T}^d;\R^d)$, the space of all infinitely differentiable  functions $\mathpzc{U}$ satisfying \eqref{pbc}. \emph{We do not impose the zero mean condition on $\mathpzc{U}$ because the absorption term $\beta|\mathpzc{U}|^{r-1}\mathpzc{U}$ does not satisfies this property (see \cite{MTT}). As a result, the standard Poincar\'e inequality is not applicable, and we must work with the full $\H^1$-norm instead.} 
	The completion of $\C_{\mathrm{p}}^{\infty}(\mathbb{T}^d;\R^d)$  with respect to the $\H^s$-norm is the Sobolev space $\H_{\mathrm{p}}^s(\mathbb{T}^d):=\mathrm{H}_{\mathrm{p}}^s(\mathbb{T}^d;\mathbb{R}^d)$.  From \cite[Proposition 5.39]{JCR1}, the Sobolev space of periodic functions, that is $\H_{\mathrm{p}}^s(\mathbb{T}^d)$, is same as the following: 
	$$\left\{\mathpzc{U}:\mathpzc{U}=\sum_{k\in\mathbb{Z}^d}\mathpzc{U}_{k}\mathrm{e}^{2\pi i k\cdot x /  \mathrm{L}},\ \overline{\mathpzc{U}}_{k}=\mathpzc{U}_{-k}, \  \|\mathpzc{U}\|_{{\H}^s_f}:=\left(\sum_{k\in\mathbb{Z}^d}(1+|k|^{2s})|\mathpzc{U}_{k}|^2\right)^{1/2}<\infty\right\}.$$ 
	Let us define 
	\begin{align*} 
		\mathcal{V}:=\{\mathpzc{U}\in\C_{\mathrm{p}}^{\infty}(\mathbb{T}^d;\R^d):\nabla\cdot\mathpzc{U}=0\}.
	\end{align*}
	We denote by $\H$, the closure of $\mathcal{V}$ in the Lebesgue space $\L^2(\mathbb{T}^d):=\mathrm{L}^2(\mathbb{T}^d;\R^d)$ and by $\widetilde{\L}^{p}$, the closure of $\mathcal{V}$ in the Lebesgue space $\L^p(\mathbb{T}^d):=\mathrm{L}^p(\mathbb{T}^d;\R^d)$ for $p\in(2,\infty]$, respectively. We endow the space $\H$ with the inner product and norm of $\L^2(\mathbb{T}^d),$ and are denoted by 
	\begin{align*}
		&(\mathpzc{U},\mathpzc{Y}):=(\mathpzc{U},\mathpzc{Y})_{\L^2(\mathbb{T}^d)}=\int_{\mathbb{T}^d}\mathpzc{U}(x)\cdot\mathpzc{Y}(x)\d x\\ \text{ and } \  &\|\mathpzc{U}\|_{\H}^2:=\|\mathpzc{U}\|_{\L^2(\mathbb{T}^d)}^2=\int_{\mathbb{T}^d}|\mathpzc{U}(x)|^2\d x, \ \text{ for } \ \mathpzc{U},\v\in\H.
	\end{align*}
	For $p\in(2,\infty)$, the space $\widetilde{\L}^{p}$ is equipped with the following norm of $\L^p(\mathbb{T}^d)$:
	\begin{align*}
		\|\mathpzc{U}\|_{\widetilde{\L}^p}^p:
		=\int_{\mathbb{T}^d}|\mathpzc{U}(x)|^p\d x \  \text{ for } \ \mathpzc{U}\in\wi\L^p.
	\end{align*}
	For $p=\infty$, the space $\widetilde{\L}^{\infty}$ is equipped with the following norm of $\L^{\infty}(\mathbb{T}^d)$:
		\begin{align*}
		\|\mathpzc{U}\|_{\widetilde{\L}^{\infty}}:
		=\esssup_{x\in\mathbb{T}^d}|\mathpzc{U}(x)| \  \text{ for } \ \mathpzc{U}\in\wi\L^{\infty}.
	\end{align*}
	We also define the space $\V$ as the closure of $\mathcal{V}$ in the Sobolev space $\H^1_{\mathrm{p}}(\mathbb{T}^d)$. We equip the space $\V$ with the inner product
	\begin{align*}
		(\mathpzc{U},\mathpzc{Y})_{\V}&:=(\mathpzc{U},\mathpzc{Y})_{\L^2(\mathbb{T}^d)}+(\nabla\mathpzc{U},\nabla\mathpzc{Y})_{\L^2(\mathbb{T}^d)}\nonumber\\&=
		\int_{\mathbb{T}^d}\mathpzc{U}(x)\cdot\mathpzc{Y}(x)\d x+\int_{\mathbb{T}^d}\nabla\mathpzc{U}(x)\cdot\nabla\mathpzc{Y}(x)\d x\ \text{ for } \ \mathpzc{U},\mathpzc{Y}\in\V,
     \end{align*}
     and the norm
     \begin{align*}
		\|\mathpzc{U}\|_{\V}^2:=\|\mathpzc{U}\|_{\L^2(\mathbb{T}^d)}^2+\|\nabla\mathpzc{U}\|_{\L^2(\mathbb{T}^d)}^2=\int_{\mathbb{T}^d}|\mathpzc{U}(x)|^2\d x+\int_{\mathbb{T}^d}|\nabla\mathpzc{U}(x)|^2\d x \ \text{ for } \ \mathpzc{U},\v\in\V.
	\end{align*}
%
	Let $\langle \cdot,\cdot\rangle $ denote the duality pairing between the spaces $\V$  and its dual $\V^*$, as well as between $\widetilde{\L}^p$ and its dual $\widetilde{\L}^{p'}$, where $\frac{1}{p}+\frac{1}{p'}=1$. Note that $\H$ can be identified with its own dual $\H^*$. According to \cite[Subsection 2.1]{FKS}, the sum space $\V^*+\widetilde{\L}^{p'}$ is well defined and forms a Banach when equipped with the norm 
	\begin{align*}
		\|\mathpzc{U}\|_{\V^*+\widetilde{\L}^{p'}}&:=\inf\{\|\mathpzc{U}_1\|_{\V^*}+\|\mathpzc{U}_2\|_{\wi\L^{p'}}:\mathpzc{U}=\mathpzc{U}_1+\mathpzc{U}_2, \mathpzc{U}_1\in\V^* \ \text{and} \ \mathpzc{U}_2\in\wi\L^{p'}\}\nonumber\\&=
		\sup\left\{\frac{|\langle\mathpzc{U}_1+\mathpzc{U}_2,\boldsymbol{\eta}\rangle|}{\|\boldsymbol{\eta}\|_{\V\cap\widetilde{\L}^p}}:\boldsymbol{0}\neq\boldsymbol{\eta}\in\V\cap\widetilde{\L}^p\right\},
	\end{align*}
	where the norm $\|\cdot\|_{\V\cap\widetilde{\L}^p}$ on the intersection space $\V\cap\wi\L^p$ is defined by
	$$\|\cdot\|_{\V\cap\widetilde{\L}^p}:=\max\{\|\cdot\|_{\V}, \|\cdot\|_{\wi\L^p}\}.$$ 
	Moreover, the norm $\|\cdot\|_{\V\cap\widetilde{\L}^p}$ is equivalent to the norms $\|\cdot\|_{\V}+\|\cdot\|_{\widetilde{\L}^{p}}$ and $\sqrt{\|\cdot\|_{\V}^2+\|\cdot\|_{\widetilde{\L}^{p}}^2}$. Furthermore, we have
	$$
	(\V^*+\widetilde{\L}^{p'})^*\cong	\V\cap\widetilde{\L}^p \  \text{and} \ (\V\cap\widetilde{\L}^p)^*\cong\V^*+\widetilde{\L}^{p'}.
	$$
	Moreover, we have the continuous embeddings $\V\cap\widetilde{\L}^p\hookrightarrow\V\hookrightarrow\H\cong\H^*\hookrightarrow\V^*\hookrightarrow\V^*+\widetilde{\L}^{p'},$ where the embedding $\V\hookrightarrow\H$ is compact. 
	
%

	\subsection{Helmholtz-Hodge projection operator}\label{HHLP}
	The Helmholtz-Hodge projection operator $\mathcal{P}_p: \L^p(\mathbb{T}^d) \to\wi\L^p,$ $p\in[1,\infty)$ (cf.  \cite{DFHM}, etc.)
	is a bounded linear operator that projects vector fields onto the space of divergence free fields. For $p=2$, $\mathcal{P}:=\mathcal{P}_2$ becomes an orthogonal projection in the Hilbert space $\L^2(\mathbb{T}^d)$ (see \cite[Section 2.1]{JCR}). 
	
	\subsection{Stokes operator}\label{linope}
	 The Stokes operator is a linear operator which is defined as
	\begin{equation}\label{stokesope}
		\left\{
		\begin{aligned}
	\mathcal{A}\mathpzc{U}&:=-\mathcal{P}\Delta\mathpzc{U}=-\Delta\mathpzc{U},\;\mathpzc{U}\in\D(\mathcal{A}),\\
			\D(\mathcal{A})&:=\V\cap{\H}^{2}_\mathrm{p}(\mathbb{T}^d). 
		\end{aligned}
		\right.
	\end{equation}
	For a Fourier expansion $\mathpzc{U}=\sum\limits_{k\in\mathbb{Z}^d} e^{2\pi i k\cdot x} \mathpzc{U}_{k} ,$ Parseval's identity gives:
	\begin{align*}
		\|\mathpzc{U}\|_{\H}^2=\sum\limits_{k\in\mathbb{Z}^d} |\mathpzc{U}_{k}|^2 \  \text{and} \ \|\mathcal{A}\mathpzc{U}\|_{\H}^2=(2\pi)^4\sum_{k\in\mathbb{Z}^d}|k|^{4}|\mathpzc{U}_{k}|^2.
	\end{align*}
	Therefore, we have 
	\begin{align*}
		\|\mathpzc{U}\|_{\H^2_\mathrm{p}}^2=\sum_{k\in\mathbb{Z}^d}|\mathpzc{U}_{k}|^2+\sum_{k\in\mathbb{Z}^d}|k|^{4}|\mathpzc{U}_{k}|^2= \|\mathpzc{U}\|_{\H}^2+\frac{1}{(2\pi)^4}\|\mathcal{A}\mathpzc{U}\|_{\H}^2\leq\|\mathpzc{U}\|_{\H}^2+\|\mathcal{A}\mathpzc{U}\|_{\H}^2.
	\end{align*}
	Moreover, by the definition of $\|\cdot\|_{\H^2_\mathrm{p}}$, we have $	\|\mathpzc{U}\|_{\H^2_\mathrm{p}}^2\geq\|\mathpzc{U}\|_{\H}^2+\|\mathcal{A}\mathpzc{U}\|_{\H}^2$. Hence, the norms $\|\cdot\|_{\H^2_\mathrm{p}}$ and $\|\cdot\|_{\H}+\|\mathcal{A}\cdot\|_{\H}$ are equivalent and  $\D(\I+\mathcal{A})=\H^2_\mathrm{p}(\mathbb{T}^d)=:\V_2$.

	\begin{remark}\label{rg3L3r}
   1.) For $d=2$, by using the Sobolev embedding, $\H_{\mathrm{p}}^1(\mathbb{T}^d)\hookrightarrow\L^p(\mathbb{T}^d)$, for all $p\in[2,\infty)$, we find 
		\begin{align}\label{3a71}
			\|\mathpzc{U}\|^{r+1}_{\L^{p(r+1)}(\mathbb{T}^d)}\leq C\bigg(\int_{\mathbb{T}^d}|\nabla\mathpzc{U}(x)|^2|\mathpzc{U}(x)|^{r-1}\d x+ \int_{\mathbb{T}^d}|\mathpzc{U}(x)|^{r+1}\d x\bigg),
		\end{align}
		for all $\mathpzc{U}\in\V_2$ and for any $p\in[2,\infty)$. 
		
		3.) Similarly, for $d=3$, by the Sobolev embedding $\H_{\mathrm{p}}^1(\mathbb{T}^d)\hookrightarrow\L^6(\mathbb{T}^d)$, we find
		\begin{align}\label{371}
			\|\mathpzc{U}\|^{r+1}_{\L^{3(r+1)}(\mathbb{T}^d)}&=\||\mathpzc{U}|^{\frac{r+1}{2}}\|_{\L^{6}(\mathbb{T}^d)}^2\leq C\||\mathpzc{U}|^{\frac{r+1}{2}}\|_{\H_{\mathrm{p}}^1(\mathbb{T}^d)}^2
			\nonumber\\&\leq C\bigg(\int_{\mathbb{T}^d}|\nabla\mathpzc{U}(x)|^2|\mathpzc{U}(x)|^{r-1}\d x+ \int_{\mathbb{T}^d}|\mathpzc{U}(x)|^{r+1}\d x\bigg), 
		\end{align}
		for all $\mathpzc{U}\in\V_2$. 
	\end{remark}

	\subsection{Bilinear operator}\label{bilinope}
		Let $b(\cdot,\cdot,\cdot):\V\times\V\times\V\to\R$ be a continuous trilinear form defined by
		\begin{align*}
			b(\mathpzc{U},\mathpzc{Y},\mathpzc{Z})=\int_{\mathbb{T}^d}(\mathpzc{U}(x)\cdot\nabla)\mathpzc{Y}(x)\cdot
			\mathpzc{Z}(x)\d x.
		\end{align*} 
		By employing the divergence free condition, the trilinear form $\b(\cdot,\cdot,\cdot)$ satisfies $$b(\mathpzc{U},\mathpzc{Y},\mathpzc{Y})=0, \ \text{ for all } \ \mathpzc{U},\mathpzc{Y}\in\V.$$ By the application of Riesz representation theorem, we deduce the existence of a continuous bilinear map $\mathfrak{B}(\cdot,\cdot):\V\times\V\to\R$ such that $$\langle\mathfrak{B}(\mathpzc{U},\mathpzc{Y}),\mathpzc{Z}\rangle=b(\mathpzc{U},\mathpzc{Y},\mathpzc{Z}) \ \text{ for all } \mathpzc{U},\mathpzc{Y},\mathpzc{Z}\in\V.$$ 
		Furthermore, as shown in \cite {Te}, the operator $\mathfrak{B}(\cdot)$ satisfies the following skew-symmetric properties:
		\begin{align}\label{syymB}
			\langle\mathfrak{B}(\mathpzc{U},\mathpzc{Y}),\mathpzc{Z}\rangle=-\langle\mathfrak{B}(\mathpzc{U},\mathpzc{Z}),\mathpzc{Y}\rangle \ \text{ and } \
			\langle\mathfrak{B}(\mathpzc{U},\mathpzc{Y}),\mathpzc{Y}\rangle=0,
		\end{align}
		for any $\mathpzc{U},\mathpzc{Y},\mathpzc{Z}\in\V$. Without ambiguity, we also denote $\mathfrak{B}(\mathpzc{U}) = \mathfrak{B}(\mathpzc{U}, \mathpzc{U})$. Furthermore, if $\mathpzc{U},\mathpzc{Y}\in\H$ are such that $(\mathpzc{U}\cdot\nabla)\mathpzc{Y}=\sum\limits_{j=1}^d \mathpzc{U}_j\frac{\partial \mathpzc{Y}_j}{\partial x_j}\in\L^2(\mathbb{T}^d)$, then $\mathfrak{B}(\mathpzc{U},\mathpzc{Y})=\mathcal{P}[(\mathpzc{U}\cdot\nabla)\mathpzc{Y}]$.

		\begin{remark}\label{BLrem}\cite[Theorem 2.5]{SMTM}
			1.) In view of \eqref{syymB}, along with H\"older's and Young's inequalities, we calculate
			\begin{align}\label{syymB1}
				|\langle\mathfrak{B}(\mathpzc{U})-\mathfrak{B}(\mathpzc{Y}),\mathpzc{U}-\mathpzc{Y}\rangle|\leq
				\frac{\mu }{2}\|\nabla(\mathpzc{U}-\mathpzc{Y})\|_{\H}^2+\frac{1}{2\mu }\||\mathpzc{Y}|(\mathpzc{U}-\mathpzc{Y})\|_{\H}^2.
			\end{align} 
			The term $\||\mathpzc{Y}|(\mathpzc{U}-\mathpzc{Y})\|_{\H}^2$ can be estimated by applying H\"older's and Young's inequalities as follows:
			\begin{align}\label{syymB2}
				\int_{\mathbb{T}^d}|\mathpzc{Y}(x)|^2|\mathpzc{U}(x)-\mathpzc{Y}(x)|^2\d x &\leq\frac{\beta\mu }{2}\||\mathpzc{Y}|^{\frac{r-1}{2}}(\mathpzc{U}-\mathpzc{Y})\|_{\H}^2+\frac{r-3}{r-1}\left[\frac{4}{\beta\mu (r-1)}\right]^{\frac{2}{r-3}}\|\mathpzc{U}-\mathpzc{Y}\|_{\H}^2,
			\end{align}
			for $r>3$. Using \eqref{syymB2} in \eqref{syymB1}, we find 
			\begin{align}\label{3.4}
				|\langle\mathfrak{B}(\mathpzc{U})-\mathfrak{B}(\mathpzc{Y}),\mathpzc{U}-\mathpzc{Y}\rangle|\leq
				\frac{\mu }{2}\|\nabla(\mathpzc{U}-\mathpzc{Y})\|_{\H}^2 +\frac{\beta}{4}\||\mathpzc{Y}|^{\frac{r-1}{2}}(\mathpzc{U}-\mathpzc{Y})\|_{\H}^2 +\varrho\|\mathpzc{U}-\mathpzc{Y}\|_{\H}^2,
			\end{align}
			where \begin{align}\label{eqn-varrho}
				\varrho:=\frac{r-3}{2\mu(r-1)}\left[\frac{4}{\beta\mu (r-1)}\right]^{\frac{2}{r-3}}.
				\end{align}
			
			2.) Similarly, one can establish the following inequality:
			\begin{align}\label{syymB3}
				|(\mathfrak{B}(\mathpzc{U}),\mathcal{A}\mathpzc{U})|\leq\frac{\mu}{2}\|\mathcal{A}\mathpzc{U}\|_{\H}^2+\frac{\beta}{4} \||\mathpzc{U}|^{\frac{r-1}{2}}\nabla\mathpzc{U}\|_{\H}^2+\varrho\|\nabla\mathpzc{U}\|_{\H}^2.
			\end{align}
		\end{remark}

	 \subsection{Nonlinear operator}\label{nonlinope}
		We define the operator $$\mathfrak{C}(\mathpzc{U}):=\mathcal{P}(|\mathpzc{U}|^{r-1}\mathpzc{U})\ \text{ for }\ \mathpzc{U}\in\V\cap\L^{r+1}.$$  From \cite[Remark 1.6]{Te}, the operator $\mathfrak{C}(\cdot):\V\cap\widetilde{\L}^{r+1}\to\V^*+\widetilde{\L}^{\frac{r+1}{r}}$ is well-defined. Moreover, it satisfies the identity $\langle\mathfrak{C}(\mathpzc{U}),\mathpzc{U}\rangle =\|\mathpzc{U}\|_{\widetilde{\L}^{r+1}}^{r+1}.$ Furthermore, for all $\mathpzc{U}\in\V\cap\L^{r+1}$, the map $\mathfrak{C}(\cdot):\V\cap\widetilde{\L}^{r+1}\to\V^*+\widetilde{\L}^{\frac{r+1}{r}}$ is Gateaux differentiable with Gateaux derivative given by 
		\begin{align}\label{C}
			\mathfrak{C}'(\mathpzc{U})\mathpzc{Y}&=\left\{\begin{array}{cl}\mathcal{P}(\mathpzc{Y}),&\text{ for }r=1,\\ \left\{\begin{array}{cc}\mathcal{P}(|\mathpzc{U}|^{r-1}\mathpzc{Y})+(r-1)\mathcal{P}\left(\frac{\mathpzc{U}}{|\mathpzc{U}|^{3-r}}(\mathpzc{U}\cdot\mathpzc{Y})\right),&\text{ if }\mathpzc{U}\neq \mathbf{0},\\\mathbf{0},&\text{ if }\mathpzc{U}=\mathbf{0},\end{array}\right.&\text{ for } 1<r<3,\\ \mathcal{P}(|\mathpzc{U}|^{r-1}\mathpzc{Y})+(r-1)\mathcal{P}(\mathpzc{U}|\mathpzc{U}|^{r-3}(\mathpzc{U}\cdot\mathpzc{Y})), &\text{ for }r\geq 3,\end{array}\right.
		\end{align}
		for all $\mathpzc{Y}\in\V\cap\widetilde{\L}^{r+1}$. 
		\begin{lemma}{(\textbf{Monotonicity of} $\mathfrak{C}(\cdot)$)}
			For every $r\geq1$ and for all $\mathpzc{U},\mathpzc{Y}\in\widetilde{\L}^{r+1}$, we have following estimate ({cf. \cite[Subsection 2.4]{SMTM}}):
			\begin{align}\label{monoC2}
				\langle\mathfrak{C}(\mathpzc{U})-\mathfrak{C}(\mathpzc{Y}),\mathpzc{U}-\mathpzc{Y}\rangle\geq \frac{1}{2}\||\mathpzc{U}|^{\frac{r-1}{2}}(\mathpzc{U}-\mathpzc{Y})\|_{\H}^2+\frac{1}{2}\||\mathpzc{Y}|^{\frac{r-1}{2}}(\mathpzc{U}-\mathpzc{Y})\|_{\H}^2\geq\frac{1}{2^{r-1}}\|\mathpzc{U}-\mathpzc{Y}\|_{\wi\L^{r+1}}^{r+1}.
			\end{align}
	\end{lemma}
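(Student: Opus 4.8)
The plan is to establish the monotonicity estimate \eqref{monoC2} by reducing it to a pointwise inequality in $\R^d$ and then integrating over $\mathbb{T}^d$. First I would recall that since $\mathcal{P}$ is an orthogonal projection in $\L^2(\mathbb{T}^d)$ and $\mathpzc{U}-\mathpzc{Y}$ need not be divergence free in general, one has to be slightly careful; but because $\mathfrak{C}(\mathpzc{U})-\mathfrak{C}(\mathpzc{Y})=\mathcal{P}\big(|\mathpzc{U}|^{r-1}\mathpzc{U}-|\mathpzc{Y}|^{r-1}\mathpzc{Y}\big)$ and $\mathpzc{U}-\mathpzc{Y}\in\widetilde{\L}^{r+1}\subset\H$ is already in the range of $\mathcal{P}$, we get $\langle\mathfrak{C}(\mathpzc{U})-\mathfrak{C}(\mathpzc{Y}),\mathpzc{U}-\mathpzc{Y}\rangle=\int_{\mathbb{T}^d}\big(|\mathpzc{U}(x)|^{r-1}\mathpzc{U}(x)-|\mathpzc{Y}(x)|^{r-1}\mathpzc{Y}(x)\big)\cdot\big(\mathpzc{U}(x)-\mathpzc{Y}(x)\big)\d x$. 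So the whole problem is pointwise.

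Next I would prove the algebraic inequality: for all $a,b\in\R^d$ and $r\ge 1$,
\begin{align*}
	\big(|a|^{r-1}a-|b|^{r-1}b\big)\cdot(a-b)\ \geq\ \frac12|a|^{r-1}|a-b|^2+\frac12|b|^{r-1}|a-b|^2.
\end{align*}
Expanding the left-hand side gives $|a|^{r+1}+|b|^{r+1}-(|a|^{r-1}+|b|^{r-1})(a\cdot b)$, while the right-hand side is $\frac12(|a|^{r-1}+|b|^{r-1})(|a|^2+|b|^2)-(|a|^{r-1}+|b|^{r-1})(a\cdot b)$; so the claim is equivalent to $|a|^{r+1}+|b|^{r+1}\ge \frac12(|a|^{r-1}+|b|^{r-1})(|a|^2+|b|^2)$, i.e. $\frac12(|a|^{r-1}-|b|^{r-1})(|a|^2-|b|^2)\ge 0$, which holds since $t\mapsto t^{r-1}$ and $t\mapsto t^2$ are both nondecreasing on $[0,\infty)$ (for $r=1$ the first factor is $0$ and the inequality is an equality). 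Integrating this pointwise bound over $\mathbb{T}^d$ yields the first inequality in \eqref{monoC2}.

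For the second inequality I would show the pointwise bound $\frac12|a|^{r-1}|a-b|^2+\frac12|b|^{r-1}|a-b|^2\ge \frac{1}{2^{r-1}}|a-b|^{r+1}$. Since $\max\{|a|,|b|\}\ge\frac12|a-b|$ by the triangle inequality, we have $|a|^{r-1}+|b|^{r-1}\ge\max\{|a|,|b|\}^{r-1}\ge\big(\tfrac12|a-b|\big)^{r-1}=2^{1-r}|a-b|^{r-1}$, and multiplying by $\frac12|a-b|^2$ gives the claim; integrating over $\mathbb{T}^d$ and using $\int_{\mathbb{T}^d}|\mathpzc{U}-\mathpzc{Y}|^{r+1}\d x=\|\mathpzc{U}-\mathpzc{Y}\|_{\widetilde{\L}^{r+1}}^{r+1}$ completes the proof. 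The only mild subtlety — and the step I would treat most carefully — is the case $r=1$, where $\mathfrak{C}(\mathpzc{U})=\mathcal{P}\mathpzc{U}$, the middle terms in \eqref{monoC2} reduce to $\|\mathpzc{U}-\mathpzc{Y}\|_{\H}^2$, and all three quantities coincide; and the degenerate-$a$-or-$b$ cases (one of them zero, or $1<r<3$ where $|a|^{r-3}$ is singular), which are handled by continuity since the final bound involves only $|a|^{r-1},|b|^{r-1}$ with $r-1\ge 0$.
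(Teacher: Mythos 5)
Your overall strategy --- reduce the pairing to a pointwise vector inequality in $\R^d$ and integrate over $\mathbb{T}^d$ --- is the standard one and is essentially what the cited reference does; the paper itself offers no proof of this lemma, only the citation to \cite[Subsection 2.4]{SMTM}. Your treatment of the projection is fine, and your proof of the first inequality is correct and complete: the difference of the two sides is exactly $\frac12\big(|a|^{r-1}-|b|^{r-1}\big)\big(|a|^2-|b|^2\big)\ge0$, which is the identity underlying the monotonicity of $\mathfrak{C}$.

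The second inequality, however, is not established with the stated constant. From $\max\{|a|,|b|\}\ge\frac12|a-b|$ you obtain $|a|^{r-1}+|b|^{r-1}\ge 2^{1-r}|a-b|^{r-1}$, and multiplying by $\frac12|a-b|^2$ produces the prefactor $\frac12\cdot 2^{1-r}=2^{-r}$, i.e. the bound $\frac{1}{2^{r}}|a-b|^{r+1}$ --- a factor of $2$ weaker than the claimed $\frac{1}{2^{r-1}}|a-b|^{r+1}$, so the step ``gives the claim'' does not. To recover $\frac{1}{2^{r-1}}$ one should instead use the convexity estimate $(|a|+|b|)^{r-1}\le 2^{r-2}\big(|a|^{r-1}+|b|^{r-1}\big)$, valid for $r\ge2$, which yields $|a|^{r-1}+|b|^{r-1}\ge 2^{2-r}|a-b|^{r-1}$ and hence the prefactor $\frac12\cdot 2^{2-r}=2^{1-r}$ as required. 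Be aware that no argument can close this gap on the full range $r\ge1$: taking $\mathpzc{Y}=\boldsymbol{0}$, the middle quantity in \eqref{monoC2} equals $\frac12\|\mathpzc{U}\|_{\wi\L^{r+1}}^{r+1}$ while the right-hand side is $\frac{1}{2^{r-1}}\|\mathpzc{U}\|_{\wi\L^{r+1}}^{r+1}$, which exceeds it whenever $1<r<2$; so the chained form of the second inequality really requires $r\ge2$ (the paper only invokes it for $r\ge3$, where your argument, corrected as above, goes through).
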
 
	
	\begin{remark}[{\cite[Lemma 2.1]{KWH}}]
		We have following equality on $\mathbb{T}^d$:
		\begin{align}\label{torusequ}
	(\mathfrak{C}(\mathpzc{U}),\mathcal{A}\mathpzc{U})=\||\mathpzc{U}|^{\frac{r-1}{2}}\nabla\mathpzc{U}\|_{\H}^{2} +4\left[\frac{r-1}{(r+1)^2}\right]\|\nabla|\mathpzc{U}|^{\frac{r+1}{2}}\|_{\H}^{2}.
		\end{align}
	\end{remark}

	\subsection{Some useful functional inequalities}
	\label{defunfmod}
	We utilize the following inequalities throughout this paper: 
	
	1.) \emph{Interpolation inequality:} Let $0\leq s_1\leq s\leq s_2\leq\infty$ and $0\leq\theta\leq1$ be such that $\frac{1}{s}=\frac{\theta}{s_1}+\frac{1-\theta}{s_2}$. Then for $\mathpzc{U}\in\L^{s_2}(\mathbb{T}^d)$, we have
	\begin{align*}
		\|\mathpzc{U}\|_{\L^s}\leq\|\mathpzc{U}\|_{\L^{s_1}}^{\theta}\|\mathpzc{U}\|_{\L^{s_2}}^{1-\theta}.
	\end{align*} 
	
	2.) \emph{Interpolation inequality in $\D(\mathcal{A}^{\gamma})$ with $\gamma\in\R$:} Let $\gamma_1,\gamma_2,\gamma_3\in\R$ with $\gamma_2=\theta\gamma_1+(1-\theta)\gamma_3$, where $\theta\in[0,1]$. Then, for $\mathpzc{U}\in\D(\mathcal{A}^{\gamma_1})\cap\D(\mathcal{A}^{\gamma_3})$, we have
	\begin{align}\label{daint}
		\|\mathcal{A}^{\gamma_2}\mathpzc{U}\|_{\H}\leq\|\mathcal{A}^{\gamma_1}\mathpzc{U}\|_{\H}^{\theta}	\|\mathcal{A}^{\gamma_3}\mathpzc{U}\|_{\H}^{1-\theta}.
	\end{align}

	
	4.)  \emph{Vector valued Leibniz rule for fractional derivatives \cite{PDAnc}:}
	Let $\gamma\in(0,1)$ and $p_1,p_2,q_1,q_2$ in $(1,\infty)$ and $r\in(1,\infty]$ such that 
	$\frac{1}{q_1}+\frac{1}{q_2}=\frac{1}{r}=\frac{1}{p_1}+\frac{1}{p_2}$. Then, we have
	\begin{align}\label{fracLeb}
		\|\mathcal{A}^{\frac{\gamma}{2}}(\mathpzc{U}\mathpzc{Y})\|_{\L^r}\leq C
		\|\mathcal{A}^{\frac{\gamma}{2}}\mathpzc{U}\|_{\L^{p_1}}\|\mathpzc{Y}\|_{\L^{p_2}}+
		\|\mathpzc{U}\|_{\L^{q_1}}\|\mathcal{A}^{\frac{\gamma}{2}}\mathpzc{Y}\|_{\L^{q_2}}.
	\end{align}
	
	5.) \emph{Agmon's inequality:} For all $\mathpzc{U}\in\H^2_{\mathrm{p}}(\mathbb{T}^d)$, $d\in\{2,3\}$, we have 
	\begin{align}\label{agmon}
		\|\mathpzc{U}\|_{\L^{\infty}(\mathbb{T}^d)}\leq C\|\mathpzc{U}\|_{\H}^{1-\frac{d}{4}}
		\|\mathpzc{U}\|_{\H^2_{\mathrm{p}}(\mathbb{T}^d)}^{\frac{d}{4}}
		=\|\mathpzc{U}\|_{\H}^{1-\frac{d}{4}}
		\|(\I+\mathcal{A})\mathpzc{U}\|_{\H}^{\frac{d}{4}}.
	\end{align}

	\section{Abstract formulation and Auxiliary results}\label{abscon} \setcounter{equation}{0}
	We begin this section by presenting an abstract formulation of the system \eqref{CBF} and establishing uniform energy estimates, followed by proofs of continuous dependence results. Let $\U$ denote a complete metric space. A measurable function $\a(\cdot):[0,T]\to\U$ is referred as \emph{admissible control}. Collecting all such controls, denoted by $\mathscr{U}$, forms the \emph{admissible class}.
   Let us set $\Z(\cdot):=\mathcal{P}\mathpzc{U}(\cdot)$, $\mathcal{P}\mathpzc{U}_0:=\z$,  $\mathcal{P}\g=\f$. On projecting the first equation in \eqref{CBF}, we obtained the following abstract controlled CBF equations for $\a(\cdot)\in\mathscr{U}$, :
	\begin{equation}\label{stap}
		\left\{
		\begin{aligned}
			\frac{\d\Z(s)}{\d s}&= -\mu\mathcal{A}\Z(s)-\mathfrak{B}(\Z(s))-\beta\mathfrak{C}(\Z(s))+
			\f(s,\a(s)),  \  \text{ in } \ (t,T]\times\H, \\
			\Z(t)&=\z\in\H.
		\end{aligned}
		\right.
	\end{equation}
  \begin{hypothesis}\label{fhypdet}
Assume that $\f:[0,T]\times\U\to\V$ be bounded and continuous. 
  \end{hypothesis}
  The above hypothesis is to remember that our optimal control problem is motivated by the minimization of enstrophy, as mentioned in the introduction. So, the cost functional associated with the optimal control problem contains the gradient term.	Let us first give the notions of solutions for the controlled CBF equations \eqref{stap} (cf. \cite{SMTM}):
	\begin{definition}\label{def-var-strong} 
		
		(i) Let $\a(\cdot)\in\mathscr{U}$. Then, a function $$\Z(\cdot)\in\mathrm{C}([t,T];\H)\cap\mathrm{L}^2(t,T;\V)\cap\mathrm{L}^{r+1}(t,T;\wi\L^{r+1}),$$
		with $\frac{\d\Z}{\d s}\in\mathrm{L}^2(t,T;\V')+ \mathrm{L}^{\frac{r+1}{r}}(t,T;\wi\L^{\frac{r+1}{r}})$ is called a \emph{weak solution} to \eqref{stap}, if for a given $\f(\cdot,\a(\cdot))\in\mathrm{L}^2(t,T;\V^{*})$ and $\Z(t)=\z\in\H$, we have 
		$$\sup_{s\in[t,T]}\|\Z(s)\|_{\H}^2+\int_t^T \|\nabla\Z(s)\|_{\H}^2\d s + \int_t^T\|\Z(s)\|_{\wi\L^{r+1}}^{r+1}\d s<+\infty,$$ and the function $\Z(\cdot)$ satisfies the following weak formulation:
		\begin{align*}
			\langle\Z(s),\phi\rangle&=\langle\y,\phi\rangle+\int_t^s  \langle-\mu\mathcal{A}\Z(\tau)-\mathfrak{B}(\Z(\tau))-\beta\mathfrak{C}(\Z(\tau))+\f(\tau,\a(\tau)),\phi\rangle\d\tau,
		\end{align*} 
		  for every $\phi\in\V\cap\wi\L^{r+1}$ and every $s\in[t,T]$.
		  
		(ii) Let $\a(\cdot)\in\mathscr{U}$. Then, a function 
		$$\Z(\cdot)\in\mathrm{C}([t,T];\V)\cap\mathrm{L}^2(t,T;\V)\cap
		\mathrm{L}^{r+1}(t,T;\wi\L^{p(r+1)})$$ is called a \emph{strong solution} of \eqref{stap} with $\Z(t)=\z\in\V$ if $\Z(\cdot)$ is a weak solution and satisfies $$\sup_{s\in[t,T]}\|\nabla\Z(s)\|_{\H}^2+\int_t^T \|\mathcal{A}\Z(s)\|_{\H}^2\d s+\int_0^T\|\Z(s)\|_{\wi\L^{p(r+1)}}^{r+1}\d s<+\infty,$$  where $p\in[2,\infty)$ for $d=2$ and $p=3$ for $d=3$.
		Moreover, \eqref{stap} is satisfied as an equality in $\H$ for a.e. $s\in[t,T]$.
	\end{definition}

    For a given time interval $s\in[t, T]$, an admissible control strategy $\a(\cdot)\in\mathscr{U}$ and an initial condition $\z\in\H$ (or $\z\in\V$), we denote the weak (or strong) solution of \eqref{stap} by $\Z(\cdot;t,\z,\a)$. We write $\Z(\cdot)$ for brevity.
    Below, we give well-posedness results for the weak and strong solutions of the controlled CBF equations \eqref{stap} and derive some energy estimates.
     \begin{proposition}\label{weLLp}
      Let $t\in[0,T]$ and $\a(\cdot)\in\mathscr{U}$.
      
     	(i)  Let $\f:[0,T]\times\U\to\V^{*}$ be bounded and continuous. Then, for a given initial data $\z$, there exists a \emph{unique weak solution} $\Z(\cdot)=\Z(\cdot;t,\z,\a(\cdot))$ of the state equation \eqref{stap}. Moreover, the following uniform energy estimates for $s\in[t,T]$ holds:
     	 \begin{align}\label{eqn-conv-1-1}
     		&\|\Z(s)\|_{\H}^2
     		+\int_t^{s}\|\nabla\Z(\tau)\|_{\H}^2\d\tau+
     		\int_t^{s}\|\Z(\tau)\|_{\widetilde{\L}^{r+1}}^{r+1}\d\tau
     		\nonumber\\&\leq C(\mu,\beta)
     		\left(\|\z\|_{\H}^2+\int_t^s\|\f(\tau,\a(\tau))\|_{\V^{*}}^2\d\tau \right)
     		e^{s-t},
     	\end{align}
     	for all $s\in[t,T]$ and
     	\begin{align}\label{eqn-conv-1}
     &\sup\limits_{s\in[t,T]}\|\Z(s)\|_{\H}^2
     	+\int_t^{T}\|\nabla\Z(\tau)\|_{\H}^2\d\tau+
     	\int_t^{T}\|\Z(\tau)\|_{\widetilde{\L}^{r+1}}^{r+1}\d\tau
     	\nonumber\\&\leq C(\mu,\beta,T)
     	\left(\|\z\|_{\H}^2+\int_t^T\|\f(\tau,\a(\tau))\|_{\V^{*}}^2\d \tau\right).
     	\end{align}
     	
     	(ii)  Let $\f:[0,T]\times\U\to\H$ be bounded and continuous. Then, for a given initial data $\z\in\V$, there exists a \emph{unique strong solution} $\Z(\cdot)=\Z(\cdot;t,\z,\a(\cdot))$ of the state equation \eqref{stap}. Moreover, the following uniform energy estimates hold:  for $s\in[t,T]$ 
     	 \begin{align}\label{eqn-conv-2-2}
     		&\|\nabla\Z(s)\|_{\H}^2
     		+\int_t^{s}\|\mathcal{A}\Z(\tau)\|_{\H}^2\d\tau+
     		\int_t^{s}\||\Z(\tau)|^{\frac{r-1}{2}}\nabla\Z(\tau)\|_{\H}^{2}\d\tau
     		\nonumber\\&\leq C(\mu,\beta)
     		\left(\|\nabla\z\|_{\H}^2+\int_t^s\|\f(\tau,\a(\tau))\|_{\H}^2
     		\d \tau\right)e^{s-t},
     	\end{align}
     and
     	 \begin{align}\label{eqn-conv-2}
     		&\sup\limits_{s\in[t,T]}\|\nabla\Z(s)\|_{\H}^2+ \int_t^{s}\|\mathcal{A}\Z(\tau)\|_{\H}^2\d\tau
     		+\int_t^{s}\||\Z(\tau)|^{\frac{r-1}{2}}\nabla\Z(\tau)\|_{\H}^{2} \d\tau
     		\nonumber\\&\leq C(\mu,\beta,T) \left(\|\nabla\z\|_{\H}^2+\int_t^T\|\f(\tau,\a(\tau))\|_{\H}^2\d\tau \right).
     	\end{align}
     \end{proposition}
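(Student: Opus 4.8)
The plan is to establish Proposition~\ref{weLLp} by the classical Faedo--Galerkin scheme combined with the monotonicity structure of the operators $\mathcal{A}$, $\mathfrak{B}$ and $\mathfrak{C}$. I would fix an orthonormal basis $\{w_j\}_{j\in\mathbb{N}}$ of $\H$ consisting of eigenfunctions of the Stokes operator $\mathcal{A}$ (which exists on $\mathbb{T}^d$ since $\mathcal{A}$ is self-adjoint with compact resolvent), set $\H_n=\mathrm{span}\{w_1,\dots,w_n\}$ with projection $P_n$, and consider the finite-dimensional system $\frac{\d \Z_n}{\d s}=-\mu\mathcal{A}\Z_n-P_n\mathfrak{B}(\Z_n)-\beta P_n\mathfrak{C}(\Z_n)+P_n\f(s,\a(s))$ with $\Z_n(t)=P_n\z$. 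Local existence for $\Z_n$ follows from the Carath\'eodory theorem (the right-hand side is continuous in $\Z_n$ and measurable in $s$ because $\f(\cdot,\a(\cdot))$ is measurable and bounded), and the a priori estimates below promote this to global existence on $[t,T]$.

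For part~(i), I would test the Galerkin system against $\Z_n(s)$. Using $\langle\mathfrak{B}(\Z_n),\Z_n\rangle=0$ from \eqref{syymB}, $\langle\mathfrak{C}(\Z_n),\Z_n\rangle=\|\Z_n\|_{\widetilde{\L}^{r+1}}^{r+1}$, and the identity $\frac12\frac{\d}{\d s}\|\Z_n\|_{\H}^2=\langle\tfrac{\d\Z_n}{\d s},\Z_n\rangle$, I obtain
\begin{align*}
\frac12\frac{\d}{\d s}\|\Z_n(s)\|_{\H}^2+\mu\|\nabla\Z_n(s)\|_{\H}^2+\beta\|\Z_n(s)\|_{\widetilde{\L}^{r+1}}^{r+1}=\langle\f(s,\a(s)),\Z_n(s)\rangle.
\end{align*}
Bounding the right-hand side by $\|\f(s,\a(s))\|_{\V^*}\|\Z_n(s)\|_{\V}\le\frac{\mu}{2}\|\nabla\Z_n(s)\|_{\H}^2+C(\mu)\|\f(s,\a(s))\|_{\V^*}^2+\frac{\mu}{2}\|\Z_n(s)\|_{\H}^2$ (splitting the $\V$-norm into its $\H$- and gradient parts), absorbing the gradient term, and applying Gr\"onwall's inequality yields \eqref{eqn-conv-1-1} and then \eqref{eqn-conv-1} for $\Z_n$ uniformly in $n$. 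These bounds give weak/weak-$*$ compactness of $\Z_n$ in $\mathrm{L}^2(t,T;\V)\cap\mathrm{L}^{r+1}(t,T;\widetilde{\L}^{r+1})\cap\mathrm{L}^\infty(t,T;\H)$; a bound on $\frac{\d\Z_n}{\d s}$ in $\mathrm{L}^2(t,T;\V^*)+\mathrm{L}^{(r+1)/r}(t,T;\widetilde{\L}^{(r+1)/r})$ follows from the equation, and the Aubin--Lions--Simon lemma gives strong convergence in $\mathrm{L}^2(t,T;\H)$. Passing to the limit in the nonlinear terms uses this strong convergence for $\mathfrak{B}$, while for $\mathfrak{C}$ I would invoke the Minty--Browder monotonicity trick based on \eqref{monoC2} (equivalently identify the weak limit of $\mathfrak{C}(\Z_n)$ with $\mathfrak{C}(\Z)$ using monotonicity and hemicontinuity); the energy estimates are inherited by lower semicontinuity. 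Uniqueness follows by testing the difference of two solutions against itself and using \eqref{3.4} (for $r>3$) or the $r=3$, $2\beta\mu\ge1$ estimate to absorb the convective term, followed by Gr\"onwall.

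For part~(ii) I would test the Galerkin system against $\mathcal{A}\Z_n(s)$, obtaining
\begin{align*}
\frac12\frac{\d}{\d s}\|\nabla\Z_n(s)\|_{\H}^2+\mu\|\mathcal{A}\Z_n(s)\|_{\H}^2+\beta(\mathfrak{C}(\Z_n(s)),\mathcal{A}\Z_n(s))=(\f(s,\a(s)),\mathcal{A}\Z_n(s))-(\mathfrak{B}(\Z_n(s)),\mathcal{A}\Z_n(s)),
\end{align*}
then using \eqref{torusequ} to write $(\mathfrak{C}(\Z_n),\mathcal{A}\Z_n)$ as a sum of the nonnegative terms $\||\Z_n|^{(r-1)/2}\nabla\Z_n\|_{\H}^2$ and $4\frac{r-1}{(r+1)^2}\|\nabla|\Z_n|^{(r+1)/2}\|_{\H}^2$, and bounding the convective term by \eqref{syymB3}. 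The crucial point is that for $r>3$ (and for $r=3$ with $2\beta\mu\ge1$) the $\frac{\beta}{4}\||\Z_n|^{(r-1)/2}\nabla\Z_n\|_{\H}^2$ term coming from \eqref{syymB3} is absorbed into the positive $\beta\||\Z_n|^{(r-1)/2}\nabla\Z_n\|_{\H}^2$ term from \eqref{torusequ}, while $\frac{\mu}{2}\|\mathcal{A}\Z_n\|_{\H}^2$ is absorbed into $\mu\|\mathcal{A}\Z_n\|_{\H}^2$; the remaining lower-order term $\varrho\|\nabla\Z_n\|_{\H}^2$ and $(\f,\mathcal{A}\Z_n)\le\frac{\mu}{4}\|\mathcal{A}\Z_n\|_{\H}^2+C(\mu)\|\f\|_{\H}^2$ are handled by Gr\"onwall, yielding \eqref{eqn-conv-2-2} and \eqref{eqn-conv-2}. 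I would then pass to the limit exactly as in part~(i), now with the additional uniform bounds in $\mathrm{L}^\infty(t,T;\V)\cap\mathrm{L}^2(t,T;\D(\mathcal{A}))$, and derive the $\widetilde{\L}^{p(r+1)}$ bound from Remark~\ref{rg3L3r} (inequalities \eqref{3a71} for $d=2$ and \eqref{371} for $d=3$) applied to the two positive terms just identified. The main obstacle is the delicate absorption bookkeeping in part~(ii): one must verify that the constant conditions $r>3$ or $r=3,\ 2\beta\mu\ge1$ exactly make the Forchheimer dissipation dominate the bad part of $(\mathfrak{B}(\Z),\mathcal{A}\Z)$ (this is precisely the content of Remark~\ref{BLrem} and Remark~\ref{rg3L3r}), so that all estimates close uniformly in $n$ without any smallness assumption on the data.
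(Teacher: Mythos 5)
The paper does not prove this proposition at all: it simply cites \cite[Theorems 3.6 and 4.2]{SMTM} for the well-posedness and the energy estimates. Your Faedo--Galerkin argument with the $\H$- and $\mathcal{A}$-tests, the cancellations $\langle\mathfrak{B}(\Z_n),\Z_n\rangle=0$ and $(\mathfrak{C}(\Z_n),\mathcal{A}\Z_n)\geq\||\Z_n|^{\frac{r-1}{2}}\nabla\Z_n\|_{\H}^2$ from \eqref{torusequ}, the absorption via \eqref{syymB3}, and the Minty--Browder/Aubin--Lions passage to the limit is exactly the standard route that the cited reference takes, so the proposal is sound and consistent with what the paper relies on. The only point to tidy up is that \eqref{syymB3} (built on \eqref{syymB2}, whose Young exponent $\tfrac{2}{r-3}$ degenerates at $r=3$) is only valid for $r>3$; in the critical case $r=3$ you must instead use the direct splitting $|(\mathfrak{B}(\Z_n),\mathcal{A}\Z_n)|\leq\tfrac{\theta\mu}{2}\|\mathcal{A}\Z_n\|_{\H}^2+\tfrac{1}{2\theta\mu}\||\Z_n|\nabla\Z_n\|_{\H}^2$ with $\theta\in(0,1)$ and invoke $2\beta\mu\geq1$ to make $2\beta-\tfrac{1}{\theta\mu}\geq0$, as the paper itself does in Case-II of the comparison-principle proof.
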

    
    \begin{proof}
    	The existence and uniqueness results of weak and strong solutions as well as energy estimates has been established  in \cite[Theorem 3.6 and Theorem 4.2]{SMTM}. 
    \end{proof}

     \begin{proposition}\label{cts-dep-soln}
     Let $t\in[0,T]$ and Assume that $\f:[0,T]\times\U\to\V$ be bounded and continuous. Let $\z_1$ and $\z_2$ are in $\H$, and $\a(\cdot)\in\mathscr{U}$. Then we have the following results: 
     
     (i) There exists a constant $C$ independent of $t,\z_1,\z_2$ and $\a(\cdot)$ such that for all $s\in[t,T]$, we have
     \begin{align}\label{ctsdep0}
     	&\|\Z_1(s)-\Z_2(s)\|_{\H}^2+\int_t^s \|\nabla(\Z_1-\Z_2)(\tau)\|_{\H}^2\d\tau+\int_t^s \|\Z_1(\tau)-\Z_2(\tau)\|_{\wi\L^{r+1}}^{r+1}\d\tau\nonumber\\&\leq
       \|\z_1-\z_2\|_{\H}^2 e^{C(s-t)},
     \end{align} 
     where $\Z_1(\cdot)=\Z_1(\cdot;t,\z_1,\a(\cdot))$ and $\Z_2(\cdot)=\Z_2(\cdot;t,\z_2,\a(\cdot))$ are two weak  solutions of \eqref{stap} with $\Z_1(t)=\z_1\in\H$ and $\Z_2(t)=\z_2\in\H$. 
     
     (ii) For every $\z\in\V$, there exists a constant $C$ independent of $\a(\cdot)\in\mathscr{U}$ such that for all $s\in[t,T]$, 
     \begin{align}\label{ctsdep0.1}
     	\|\Z(s)-\z\|_{\H}^2\leq C\left(\mu,\beta,T,\|\z\|_{\V},\sup\limits_{s\in[t,T],\a\in\U}\|\f(s,\a)\|_{\H}^2\right)(s-t), 
     \end{align} 
   where $\Z(\cdot)=\Z(\cdot;t,\z,\a(\cdot))$.
     
     (iii) For every $\z\in\V,$ there exists a modulus $\omega$, independent of the controls $\a(\cdot)\in\mathscr{U}$, such that
     \begin{align}\label{ctsdep0.2}
     	\|\Z(s)-\z\|_{\V}^2\leq\omega_{\z}(s-t), \ \text{ for all } \ s\in[t,T],
     \end{align}
     where $\Z(\cdot)=\Z(\cdot;t,\z,\a(\cdot))$.
     \end{proposition}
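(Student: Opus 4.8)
The plan is to prove the three assertions separately: \eqref{ctsdep0} by the usual energy/monotonicity argument applied to the difference of two weak solutions; \eqref{ctsdep0.1} by pairing the state equation \eqref{stap} with $\Z(\cdot)-\z$ while \emph{retaining} the dissipative terms; and \eqref{ctsdep0.2} by combining \eqref{ctsdep0.1} with a one-sided sharpening of the $\V$-energy estimate and a density argument in $\V$. For (i): subtracting the two copies of \eqref{stap} and testing the equation for $\Z_1-\Z_2$ with $\Z_1-\Z_2$, the absorption contribution is bounded below by \eqref{monoC2} and the bilinear contribution is controlled by \eqref{3.4} (for $r>3$; for $r=3$ under $2\beta\mu\ge1$ one uses \eqref{syymB1} with a Young constant that is admissible precisely because $2\beta\mu\ge1$, together with \eqref{monoC2}). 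Absorbing the bilinear term into $\frac{\mu}{2}\|\nabla(\Z_1-\Z_2)\|_{\H}^2$ and into part of the $\mathfrak{C}$-lower bound leaves
\[
\frac{\d}{\d s}\|\Z_1-\Z_2\|_{\H}^2+\mu\|\nabla(\Z_1-\Z_2)\|_{\H}^2+\frac{\beta}{2^{r-1}}\|\Z_1-\Z_2\|_{\wi\L^{r+1}}^{r+1}\le 2\varrho\,\|\Z_1-\Z_2\|_{\H}^2,
\]
with $\varrho$ as in \eqref{eqn-varrho}; Gr\"onwall's inequality gives $\|\Z_1(s)-\Z_2(s)\|_{\H}^2\le\|\z_1-\z_2\|_{\H}^2e^{2\varrho(s-t)}$, and reinserting this bound and integrating over $[t,s]$ yields \eqref{ctsdep0} with a constant depending only on $\mu,\beta,r$, hence independent of $t,\z_1,\z_2,\a(\cdot)$ (this is essentially \cite[Theorem~3.6]{SMTM}).

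For (ii), let $\z\in\V$ and let $\Z(\cdot)$ be the corresponding strong solution, so that $\Z\in\mathrm{C}([t,T];\V)\cap\mathrm{L}^2(t,T;\D(\mathcal{A}))$. Since $\Z(t)=\z$, testing \eqref{stap} with $\Z(\cdot)-\z$ and integrating gives
\[
\|\Z(s)-\z\|_{\H}^2=2\int_t^s\big\langle-\mu\mathcal{A}\Z(\tau)-\mathfrak{B}(\Z(\tau))-\beta\mathfrak{C}(\Z(\tau))+\f(\tau,\a(\tau)),\,\Z(\tau)-\z\big\rangle\,\d\tau .
\]
I would expand the integrand term by term: the Stokes term equals $-\mu\|\nabla\Z\|_{\H}^2+\mu(\nabla\Z,\nabla\z)$; the absorption term equals $-\beta\|\Z\|_{\wi\L^{r+1}}^{r+1}+\beta\langle\mathfrak{C}(\Z),\z\rangle$, with $\langle\mathfrak{C}(\Z),\z\rangle\le\|\Z\|_{\wi\L^{r+1}}^{r}\|\z\|_{\wi\L^{r+1}}$ treated by Young's inequality and the embedding $\V\hookrightarrow\wi\L^{r+1}$; the convective term equals $-b(\Z,\z,\Z)$ by \eqref{syymB}, and $|b(\Z,\z,\Z)|\le\|\Z\|_{\wi\L^4}^2\|\nabla\z\|_{\H}\le C\|\Z\|_{\H}^{2-\frac d2}\|\nabla\Z\|_{\H}^{\frac d2}\|\nabla\z\|_{\H}$ by interpolation and the Sobolev embedding $\V\hookrightarrow\wi\L^{4}$; and the forcing term is $\le\frac12\|\f\|_{\H}^2+\frac12\|\Z-\z\|_{\H}^2$. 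Because $\frac d2<2$, Young's inequality lets us absorb every $\|\nabla\Z\|_{\H}^2$- and $\|\Z\|_{\wi\L^{r+1}}^{r+1}$-contribution into the dissipative terms $\mu\|\nabla\Z\|_{\H}^2$ and $\beta\|\Z\|_{\wi\L^{r+1}}^{r+1}$ kept on the left; the uniform energy bounds \eqref{eqn-conv-1} and $\sup_{s\in[t,T],\,\a\in\U}\|\f(s,\a)\|_{\H}<\infty$ turn the remaining constants into quantities depending only on $\mu,\beta,T,\|\z\|_{\V},\sup_{s,\a}\|\f(s,\a)\|_{\H}$. What survives after integration is $\|\Z(s)-\z\|_{\H}^2\le C(s-t)+\int_t^s\|\Z(\tau)-\z\|_{\H}^2\,\d\tau$, and Gr\"onwall's inequality gives \eqref{ctsdep0.1}.

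For (iii), write $\|\Z(s)-\z\|_{\V}^2=\|\Z(s)-\z\|_{\H}^2+\|\nabla(\Z(s)-\z)\|_{\H}^2$; the first term is $\le C(s-t)$ by (ii). For the gradient term I first sharpen the $\V$-energy estimate: testing \eqref{stap} with $\mathcal{A}\Z$, discarding the nonnegative quantity $(\mathfrak{C}(\Z),\mathcal{A}\Z)$ via \eqref{torusequ}, using \eqref{syymB3} for $(\mathfrak{B}(\Z),\mathcal{A}\Z)$ and Cauchy--Schwarz for $(\f,\mathcal{A}\Z)$, one obtains $\frac{\d}{\d s}\|\nabla\Z\|_{\H}^2\le\frac2\mu\|\f\|_{\H}^2+2\varrho\|\nabla\Z\|_{\H}^2$; integrating on $[t,s]$ and using the uniform bound $M:=\sup_{\tau\in[t,T]}\|\nabla\Z(\tau)\|_{\H}^2$ from \eqref{eqn-conv-2-2} gives the crucial one-sided estimate $\|\nabla\Z(s)\|_{\H}^2\le\|\nabla\z\|_{\H}^2+C(s-t)$, with $C$ and $M$ depending only on $\mu,\beta,T,\|\z\|_{\V},\sup_{s,\a}\|\f(s,\a)\|_{\H}$. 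Consequently
\[
\|\nabla(\Z(s)-\z)\|_{\H}^2=\|\nabla\Z(s)\|_{\H}^2-2(\nabla\Z(s),\nabla\z)+\|\nabla\z\|_{\H}^2\le C(s-t)+2\big(\nabla(\z-\Z(s)),\nabla\z\big),
\]
so it remains to control $(\nabla(\z-\Z(s)),\nabla\z)$ uniformly in $\a(\cdot)$. Since $\D(\mathcal{A})$ is dense in $\V$, given $\eta>0$ I choose $\z_\eta\in\D(\mathcal{A})$ with $\|\z-\z_\eta\|_{\V}<\eta$; then $(\nabla(\z-\Z(s)),\nabla\z_\eta)=\langle\mathcal{A}\z_\eta,\z-\Z(s)\rangle\le\|\mathcal{A}\z_\eta\|_{\H}\|\Z(s)-\z\|_{\H}\le C_\eta(s-t)^{1/2}$ by (ii), while $|(\nabla(\z-\Z(s)),\nabla(\z-\z_\eta))|\le(\|\nabla\Z(s)\|_{\H}+\|\nabla\z\|_{\H})\,\eta\le(\sqrt{M}+\|\nabla\z\|_{\H})\,\eta$, both bounds uniform in $\a(\cdot)$. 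Hence $\|\Z(s)-\z\|_{\V}^2\le C(s-t)+C_\eta(s-t)^{1/2}+C\eta$ for every $\eta>0$, and $\omega_{\z}(\rho):=\inf_{\eta>0}\{C\rho+C_\eta\rho^{1/2}+C\eta\}$ defines a modulus, independent of $\a(\cdot)$, with $\omega_{\z}(0^+)=0$; this proves \eqref{ctsdep0.2}.

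The main obstacle is part (iii), and precisely the requirement that $\omega_{\z}$ be independent of the control: one cannot simply invoke continuity of $s\mapsto\Z(s)$ in $\V$ (which is control-dependent), but must combine the quantitative one-sided $\V$-bound with the control-uniform $\H$-rate of (ii) through the $\D(\mathcal{A})$-density argument. A secondary point is the sign bookkeeping in (ii): obtaining the linear rate $O(s-t)$---rather than the $O((s-t)^{1/2})$ that a crude $\V^{*}+\wi\L^{\frac{r+1}{r}}$ bound on $\frac{\d\Z}{\d s}$ would give---requires keeping the dissipative terms on the left-hand side and absorbing the convective term into them, which is possible precisely because $\frac d2<2$.
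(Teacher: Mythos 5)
Your parts (i) and (ii) follow essentially the paper's route: (i) is the same difference-equation argument with \eqref{3.4} and \eqref{monoC2} plus Gr\"onwall, and (ii) is the same pairing of \eqref{stap} with $\Z-\z$ (the paper bounds the cross terms via \eqref{ctsdep6}--\eqref{ctsdep7} and the uniform bounds \eqref{eqn-conv-1}--\eqref{eqn-conv-2} rather than by absorbing into the dissipation, but the mechanism and the resulting linear rate $O(s-t)$ are identical). Part (iii) is where you genuinely diverge. The paper argues by contradiction: if \eqref{ctsdep8} failed there would be $s_n\to t$, $\a_n(\cdot)$ with $\|\Z_n(s_n)-\z\|_{\V}^2\geq\lambda$; from (ii) and the energy bounds one extracts $\Z_n(s_n)\to\z$ in $\H$ and $\Z_n(s_n)\rightharpoonup\z$ in $\V$, combines weak lower semicontinuity with the upper bound $\limsup_n\|\Z_n(s_n)\|_{\V}^2\leq\|\z\|_{\V}^2$ to get norm convergence, and invokes the Radon--Riesz property to contradict the assumption. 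You instead build the modulus explicitly: the one-sided estimate $\|\nabla\Z(s)\|_{\H}^2\leq\|\nabla\z\|_{\H}^2+C(s-t)$ (uniform in $\a$), the identity $\|\nabla(\Z(s)-\z)\|_{\H}^2\leq C(s-t)+2(\nabla(\z-\Z(s)),\nabla\z)$, and the $\D(\mathcal{A})$-density argument that converts the remaining inner product into $C_\eta(s-t)^{1/2}+C\eta$ via the $\H$-rate of (ii), with $\omega_{\z}(\rho)=\inf_{\eta>0}\{C\rho+C_\eta\rho^{1/2}+C\eta\}$. Both are correct; your version buys an explicit, constructive modulus and, notably, makes rigorous the step the paper leaves implicit --- the inequality $\limsup_n\|\Z_n(s_n)\|_{\V}^2\leq\|\z\|_{\V}^2$ does not follow directly from \eqref{eqn-conv-2-2} (whose multiplicative constant $C(\mu,\beta)$ need not be $1$) but precisely from the sharpened one-sided bound you derive; the paper's version is shorter but non-quantitative. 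One small point to keep in mind: in the one-sided $\V$-estimate the bound \eqref{syymB3} with $\varrho$ as in \eqref{eqn-varrho} degenerates at $r=3$, so for the critical case you should use the Case-II-type splitting (as in \eqref{re3c}) there as well, exactly as you already do in part (i).
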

     
     \begin{proof}
     	\vskip 0.25cm
     \noindent
     \emph{\textbf{Proof of (i):}}  Let us set $\Y(\cdot):=\Z_1(\cdot)-\Z_2(\cdot)$. Then $\Y(\cdot)$ satisfies the following system:
     	\begin{equation}\label{ctsdep}
     	\left\{
     	\begin{aligned}
    	\frac{\d\Y(s)}{\d t}&=-\mu\mathcal{A}\Y(s)- \big(\mathfrak{B}(\Z_1(s))-\mathfrak{B}(\Z_2(s))\big)\\&
    	\quad-\beta\big(\mathfrak{C}(\Z_1(s))-\mathfrak{C}(\Z_2(s))\big), 
    	 \  \text{ in } \ (t,T]\times\H, \\
     	\Y(t)&=\z_1-\z_2\in\H.
     	\end{aligned}
     	\right.
     	\end{equation}
      By using \eqref{3.4} and \eqref{monoC2}, we find
     	\begin{align}
     &	|\langle\mathfrak{B}(\Z_1)-\mathfrak{B}(\Z_2),\Z_1-\Z_2\rangle|\nonumber\\&\quad\leq
     	\frac{\mu }{2}\|\nabla(\Z_1-\Z_2)\|_{\H}^2 +\frac{\beta}{4}\||\Z_2|^{\frac{r-1}{2}}(\Z_1-\Z_2)\|_{\H}^2 +\varrho\|\Z_1-\Z_2\|_{\H}^2,\label{ctsdep1}\\
     &	\langle\mathfrak{C}(\Z_1)-\mathfrak{C}(\Z_2),\Z_1-\Z_2\rangle\nonumber\\&\quad\geq \frac{1}{2}\||\Z_2|^{\frac{r-1}{2}}(\Z_1-\Z_2)\|_{\H}^2+
     	\frac{1}{4}\||\Z_1|^{\frac{r-1}{2}}(\Z_1-\Z_2)\|_{\H}^2,\label{ctsdep11}
     \end{align}
where $\varrho$ is defined in \eqref{eqn-varrho}. In \eqref{ctsdep}, by taking the inner product with $\Y(\cdot)$ and utilizing the estimates \eqref{ctsdep1} and \eqref{ctsdep11}, and using \eqref{monoC2}, we conclude
    \begin{align}\label{ctsdep2}
    	\frac12\frac{\d}{\d s}\|\Y(s)\|_{\H}^2+\frac{\mu }{2} \|\nabla\Y(s)\|_{\H}^2+\alpha\|\Y(s)\|_{\H}^2+
    	\frac{\beta}{2^r}\|\Y(s)\|_{\wi\L^{r+1}}^{r+1} \leq\varrho\|\Y(s)\|_{\H}^2,
    \end{align}
       for a.e. $s\in[t,T]$. Then, on employing the Gronwall's inequality, we obtain 
      \begin{align}\label{ctsdep3}
      	\|\Y(s)\|_{\H}^2\leq\|\z_1-\z_2\|_{\H}^2 e^{2\varrho(s-t)},
      \end{align}
    for all $s\in[t,T]$. Substituting \eqref{ctsdep3}  into \eqref{ctsdep2}, we arrive at \eqref{ctsdep0}.
     	
     	\vskip 0.25cm
     \noindent
     \emph{\textbf{Proof of (ii):}}  Let us take $\Y(\cdot):=\Z(\cdot)-\z$, so that it satisfies the following system:
     \begin{equation}\label{strgZ}
     	\left\{
     	\begin{aligned}
     		\frac{\d\Y(s)}{\d s}&=-\mu\mathcal{A}\Z(s)- \mathfrak{B}(\Z(s))
     		-\beta\mathfrak{C}(\Z(s))+\f(s,\a(s))
     		\  \text{ in } \ (t,T]\times\H, \\
     		\Y(t)&=\boldsymbol{0}.
     	\end{aligned}
     	\right.
     \end{equation}
     For each $\z\in\V$ and $\f(\cdot,\a(\cdot))\in\mathrm{L}^2(0,T;\H)$, $\Z(\cdot)=\Z(\cdot;t,\z,\a(\cdot))$ be the strong solution of the system \eqref{stap} with initial data $\Z(t)=\z$. 
          In the first equation of the system \eqref{strgZ}, on taking the inner product with $\Y(\cdot)$ and making the use of Cauchy Schwarz and Young's inequalities, we find
         	\begin{align}\label{strgZ1}
     	    \frac12\|\Y(s)\|_{\H}^2&\leq-\mu\int_t^{s}\|\nabla\Z(\tau)\|_{\H}^2\d\tau+ \frac12\int_t^{s}\|\f(\tau)\|_{\H}^2\d\tau+\frac12\int_t^{s}
     	    \|\Y(\tau)\|_{\H}^2\d\tau
     	    \nonumber\\&\quad+
     	    \int_s^t (\mathfrak{B}(\Z(\tau)),\z)\d\tau-
     	    \beta\int_s^t (\mathfrak{C}(\Z(\tau)),\Z(\tau))\d\tau+
     	    \beta\int_s^t (\mathfrak{C}(\Z(\tau)),\z)\d\tau,
     	    \end{align}
             for all $s\in[t,T]$. 
     By the application of Cauchy-Schwarz, interpolation and Young's inequalities, we find the following estimates:
     	\begin{align}\label{ctsdep6}
     	|(\mathfrak{B}(\Z),\z)|
     	\leq C\|\Z\|_{\V}^2\|\nabla\z\|_{\H},
     	\end{align}
     	\begin{align}\label{ctsdep7}
     			\big|\big(\mathfrak{C}(\Z),\z\big)\big|
     		\leq\frac12\|\Z\|_{\wi\L^{r+1}}^{r+1}+C\|\Z\|_{\wi\L^{3(r+1)}}^{r+1}+
     		C\|\z\|_{\H}^{r+1}.
     	\end{align}
     	Plugging \eqref{ctsdep6}-\eqref{ctsdep7} into \eqref{strgZ1}, and using \eqref{eqn-conv-1} and \eqref{eqn-conv-2} yield
     	\begin{align*}
     	 \frac12\|\Y(s)\|_{\H}^2&\leq C\int_t^s \|\Z(\tau)\|_{\V}^2\d\tau+
     	 \frac12\int_t^{s}\|\f(\tau)\|_{\H}^2\d\tau+\frac12\int_t^{s}
     	 \|\Y(\tau)\|_{\H}^2\d\tau
     	 \nonumber\\&\quad+
     	 \frac{\beta}{2}\int_t^s\|\Z(\tau)\|_{\wi\L^{r+1}}^{r+1}\d\tau+
     	 \frac{\beta}{2}\int_t^s\|\Z(\tau)\|_{\wi\L^{3(r+1)}}^{r+1}\d\tau+
     	 C(s-t),
     \end{align*}
   for all $s\in[t,T]$. 
   By making the use of Gronwall's inequality and taking supremum over control strategies $\a(\cdot)$, we finally conclude 
   \begin{align*}
   	\|\Y(s)\|_{\H}^2\leq C(s-t)e^{s-t},
   \end{align*}
  for all $s\in[t,T]$, where the constant $C=C\left(\mu,\beta,T,\|\z\|_{\V},\sup\limits_{s\in[t,T],\a\in\U}\|\f(s,\a)\|_{\H}^2\right)$. This completes the proof of \eqref{ctsdep0.1}.
\vskip 0.25cm
\noindent
\emph{\textbf{Proof of (iii):}} To prove \eqref{ctsdep0.2}, we need to show that
       \begin{align}\label{ctsdep8}
       	\sup\limits_{s\in[t,t+\lambda], \ \a(\cdot)\in\mathscr{U}}
       \|\Z(s;t,\z,\a(\cdot))-\z\|_{\V}^2\to0 \ \text{ as } \ \lambda\to0.
       \end{align}
       Suppose that \eqref{ctsdep8} does not hold true. Then, there exist sequences $s_n$ and $\a_n(\cdot)\in\mathscr{U}$ such that
       \begin{align}\label{contra1}
       	s_n\to t \ \text{ and }  \ \|\Z_n(s_n)-\z\|_{\V}^2\geq\lambda
       \end{align}
       for all $n\geq1,$ where $\Z_n(s_n)=\Z(s_n;t,\z,\a_n(\cdot))$. However, it follows from \eqref{eqn-conv-1}, \eqref{eqn-conv-2} and \eqref{ctsdep0.1}, we have the following convergences (along a subsequence):
       \begin{align*}
       	\Z_n(s_n)\to\z \ \text{ in } \ \H  \  \text{ and } \
       	\Z_n(s_n)\rightharpoonup\z \ \text{ in } \ \V.
       \end{align*}
       The weak sequential convergence in $\V-$norm implies 
       \begin{align*}
       	\|\z\|_{\V}^2\leq\liminf_{n\to\infty}\|\Z_n(s_n)\|_{\V}^2,
       \end{align*}
       while the uniform energy estimate in \eqref{eqn-conv-2} provides 
       \begin{align*}
       		\|\z\|_{\V}^2\geq\limsup_{n\to\infty}\|\Z_n(s_n)\|_{\V}^2.
       \end{align*}
       These inequalities together yield that $\|\z\|_{\V}^2=\lim\limits_{n\to\infty}\|\Z_n(s_n)\|_{\V}^2$ and hence the Radon-Riesz property (\cite[Proposition 3.32, pp. 78]{HB}) yields $\Z_n(s_n)\to\z$ in $\V$, which contradicts \eqref{contra1}, and the proof of \eqref{ctsdep0.2} is completed. 
     \end{proof}
     
           \section{Dynamic Programming Method and HJBE: Viscosity Solution}\setcounter{equation}{0} \label{detstchjb}
    Dynamic programming provides a powerful framework for addressing optimal control problems. This approach involves analyzing a family of control problems that vary with respect to their initial times and initial states. By establishing a relationship between these problems, one arrives at the celebrated HJBE, which characterizes the value function associated with the optimal control problem.
    
    Consider the cost functional associated with the system \eqref{stap}, defined for a given initial time $t$, initial state $\z \in \H$, and control function $\a(\cdot) \in \mathscr{U}$, by:
    \begin{align} \label{costF1}
    	\mathfrak{J}(t, \z; \a(\cdot)) =
    	\int_t^T \mathpzc{L}(s, \Z(s), \a(s)) , \mathrm{d}s + \mathpzc{g}(\Z(T; t, \z, \a(\cdot))),
    \end{align}
    where $\mathpzc{L} : [t, T] \times \H \times \U \to \R$ is the running cost function, and $\mathpzc{g} : \H \to \R$ is the terminal cost function. Both $\mathpzc{L}$ and $\mathpzc{g}$ are assumed to be measurable.
     The objective is to minimize the cost functional \eqref{costF1} over the admissible control set $\mathscr{U}$. According to Proposition \ref{weLLp}, for any $\f \in \mathrm{L}^2(0, T; \H)$ and $\z \in \V$, the state equation \eqref{stap} admits a unique strong solution $\Z(\cdot)$ satisfying the regularity condition \eqref{eqn-conv-2}. Hence, the cost functional $\mathfrak{J}$ is well-defined.
    
    The minimization of $\mathfrak{J}$ over $\mathscr{U}$ leads, in a formal sense, to the derivation of HJBE, which provides a necessary condition for optimality through  the following partial differential equation governing the value function: 
        \begin{equation}\label{detHJB1}
    	\left\{
    	\begin{aligned}
    		\mathpzc{u}_t-(\mu\mathcal{A}\z+\mathfrak{B}(\z)+
    		\beta\mathfrak{C}(\z),\mathcal{D} \mathpzc{u})+\mathcal{F}(t,\z,\mathcal{D} \mathpzc{u})&=0, \ \text{ for } (t,\z)\in(0,T)\times\H, \\
    		\mathpzc{u}(T,\z)&=\mathpzc{g}(\z), \ \text{ for } \ \z\in\H, 
    	\end{aligned}
    	\right.
    \end{equation}
    where $\mathcal{F}$ is the Hamiltonian function given by 
    \begin{align}\label{hamfun}
    	\mathcal{F}(t,\z,\p)=\inf\limits_{\a\in\U} \left\{(\f(t,\a),\p)+\mathpzc{L}(t,\z,\a)\right\}.
    \end{align} 
 Here, $\mathpzc{u} : (0, T) \times \mathcal{H} \to \mathbb{R}$ is the unknown function, and $\mathcal{D} \mathpzc{u}$ denotes its Fr\'echet derivative in the Hilbert space $\mathcal{H}$. Note that equation \eqref{detHJB1} is a first-order, fully nonlinear partial differential equation (PDE) with respect to the gradient variable $\mathcal{D} \mathpzc{u}$. It is considered an \emph{infinite-dimensional PDE} in the sense that the state process $\mathcal{Z}(\cdot)$ in \eqref{stap} evolves in the infinite-dimensional space $\mathcal{H}$.
  \begin{definition}
  	In dynamic programming, the value function $\mathpzc{V}(\cdot)$ is defined as the optimum value of the cost functional associated with the optimal control problem \eqref{stap} and \eqref{costF1} considered as the function of initial time $t$ and initial state $\z$. Specifically, it is given by 
  	\begin{align}\label{valueF}
  		\mathpzc{V}(t,\z):=\inf\limits_{\a(\cdot)\in\mathscr{U}} \mathfrak{J}(t,\z,\a).
  	\end{align}
  \end{definition}
  
     We now analyse the continuity properties of the value function $\mathpzc{V}$. For simplicity, we assume that $\mathpzc{L}$ is time independent. This assumption helps to reduce technical difficulties and allows us to focus on the key aspects of the proof. The following assumptions on $\mathpzc{L}$, $g$, and $\f$ have been taken:
     \begin{hypothesis}\label{valueH} 
     	Assume that the running cost $\mathpzc{L} : \V \times \U \to \mathbb{R}$ and the terminal cost $\mathpzc{g} : \V \to \mathbb{R}$ are continuous. Furthermore, suppose there exists a constant $k \geq 0$, and for every $r > 0$, a modulus of continuity $\sigma_r$, such that $\mathpzc{L}$ and $\mathpzc{g}$ satisfy the following assumptions:
     	\begin{align}
     		|\mathpzc{L}(\z,\a)|, |\mathpzc{g}(\z)|&\leq C(1+\|\z\|_{\V}^k), \  \ \text{ for all } \ \z\in\V, \ \a\in\U,\label{vh1}\\
     		|\mathpzc{L}(\z,\a)-\mathpzc{L}(\x,\a)|&\leq\sigma_r(\|\z-\x\|_{\V})\  \ \text{ if } \ \|\z\|_{\V},\|\x\|_{\V}\leq r, \ \a\in\U,\label{vh2}\\
     		|\mathpzc{g}(\z)-\mathpzc{g}(\x)|&\leq\sigma_r(\|\z-\x\|_{\H}) \  \ \text{ if } \ \|\z\|_{\V},\|\x\|_{\V}\leq r.\label{vh3}
     	\end{align}
     \end{hypothesis}
     The following proposition addresses the continuity properties of $\mathpzc{V}$ and the dynamic programming principle. Since the proof follows standard argument, we refer the reader to \cite{FGSSA1} for a complete and detailed explanation.
     \begin{proposition}\label{valueP}
     	Assume that Hypothesis \ref{valueH} holds. Then the following properties of the cost functional and the value function follow:
     	
     	\textbf{(i) Local continuity of the cost functional:}
     	The cost functional $\mathfrak{J}(\cdot)$ depends continuously on the initial condition, locally in $\V$. Specifically, for every $r > 0$, there exists a modulus of continuity $\omega_r$ such that for all $t \in [0, T]$, all admissible controls $\a(\cdot) \in \mathscr{U}$, and all initial states $\z, \x \in \V$ with $\|\z\|_{\V}, \|\x\|_{\V} \leq r$, we have:
     		$$
     	\left| \mathfrak{J}(t, \z; \a(\cdot)) - \mathfrak{J}(t, \x; \a(\cdot)) \right| \leq \omega_r\left( \|\z - \x\|_{\mathcal{H}} \right).
     	$$
     	
     \textbf{(ii) Dynamic Programming Principle (DPP):}
     	The value function $\mathpzc{V}(\cdot)$ satisfies the Bellman principle of optimality. That is, for all $0 \leq t \leq \eta \leq T$ and $\z \in \V$, we have:
     		\begin{align}\label{vdpp}
     				\mathpzc{V}(t, \z) = \inf_{\a(\cdot) \in \mathscr{U}} \left\{ \int_t^\eta \mathpzc{L}(\Z(s; t, \z, \a(\cdot)), \a(s)) \, \mathrm{d}s + \mathpzc{V}(\eta, \Z(\eta; t, \z, \a(\cdot))) \right\}.
     		\end{align}
     
     \textbf{(iii) Regularity and growth of the value function:}
     	The value function $\mathpzc{V}(\cdot)$ is locally Lipschitz continuous in both time and initial data. More precisely, for each $r > 0$, there exists a modulus $\omega_r$ such that for all $t_1, t_2 \in [0, T]$ and $\z, \x \in \V$ with $\|\z\|_{\V}, \|\x\|_{\V} \leq r$, it holds that:
     	\begin{align}\label{bddV}
     			|\mathpzc{V}(t_1, \z) - \mathpzc{V}(t_2, \x)| \leq \omega_r\left( |t_1 - t_2| + \|\z - \x\|_{\mathcal{H}} \right).
     	\end{align}
     	Moreover, $\mathpzc{V}(\cdot)$ satisfies a polynomial growth bound: there exist constants $C \geq 0$ and $k \geq 0$ such that
     	\begin{align}\label{bddV1}
     			|\mathpzc{V}(t, \z)| \leq C\left(1 + \|\z\|_{\V}^k\right),
     	\end{align}
     		for all $t \in [0, T]$ and $\z \in \V$.
     \end{proposition}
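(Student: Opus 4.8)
The plan is to follow the standard dynamic-programming route, treating the a priori bounds of Propositions \ref{weLLp} and \ref{cts-dep-soln} as black boxes; the only CBF-specific point is that those estimates already absorb the damping term $\beta\mathfrak{C}(\cdot)$, so once they are available the argument is structurally identical to the 2D NSE case of \cite{FGSSA1}. I would begin by recording finiteness of $\mathpzc{V}$ and the growth bound \eqref{bddV1}: for $\z\in\V$ and any $\a(\cdot)\in\mathscr{U}$, the strong-solution estimate \eqref{eqn-conv-2} together with the boundedness of $\f$ gives $\sup_{s\in[t,T]}\|\Z(s;t,\z,\a)\|_{\V}\le C(\mu,\beta,T)(1+\|\z\|_{\V})$ \emph{uniformly in} $\a(\cdot)$; inserting this into \eqref{vh1} bounds $|\mathfrak{J}(t,\z;\a)|$ by $C(1+\|\z\|_{\V}^k)$ uniformly in $\a(\cdot)$, so $\mathpzc{V}(t,\z)\in\R$ and \eqref{bddV1} follows on taking the infimum.

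For the local continuity of the cost functional (part (i)) I would write, with $\Z_1=\Z(\cdot;t,\z,\a)$ and $\Z_2=\Z(\cdot;t,\x,\a)$,
$$|\mathfrak{J}(t,\z;\a)-\mathfrak{J}(t,\x;\a)|\le\int_t^T\bigl|\mathpzc{L}(\Z_1(s),\a(s))-\mathpzc{L}(\Z_2(s),\a(s))\bigr|\,\d s+\bigl|\mathpzc{g}(\Z_1(T))-\mathpzc{g}(\Z_2(T))\bigr|.$$
By \eqref{eqn-conv-2} both trajectories remain in a $\V$-ball of radius $r'=C(\mu,\beta,T)(1+r)$, so \eqref{vh2}--\eqref{vh3} apply with index $r'$. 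The terminal term is bounded by $\sigma_{r'}(\|\Z_1(T)-\Z_2(T)\|_{\H})$, hence by $\sigma_{r'}(e^{CT/2}\|\z-\x\|_{\H})$ via \eqref{ctsdep0}. The running term is the delicate point, because \eqref{vh2} controls $\mathpzc{L}$ in the \emph{stronger} $\V$-norm while \eqref{ctsdep0} controls $\Z_1-\Z_2$ in $\V$ only in the time-integrated sense $\int_t^T\|\Z_1(s)-\Z_2(s)\|_{\V}^2\,\d s\le Ce^{CT}\|\z-\x\|_{\H}^2$; after dominating $\sigma_{r'}$ by a concave modulus on the relevant bounded range, Jensen's inequality together with Cauchy--Schwarz in time converts $\int_t^T\sigma_{r'}(\|\Z_1(s)-\Z_2(s)\|_{\V})\,\d s$ into a modulus of $\|\z-\x\|_{\H}$, uniformly in $t$. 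Adding the two contributions gives the desired $\omega_r$, and taking the infimum over $\a(\cdot)$ shows that $\mathpzc{V}(t,\cdot)$ inherits the same $\H$-modulus on $\V$-balls.

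For the dynamic programming principle (part (ii)) I would first record the flow identity $\Z(s;t,\z,\a)=\Z\bigl(s;\eta,\Z(\eta;t,\z,\a),\a\bigr)$ for $t\le\eta\le s$, which is immediate from the uniqueness in Proposition \ref{weLLp} (both sides solve \eqref{stap} on $[\eta,s]$ with the same data and control). Writing $\bar{\mathpzc{V}}(t,\z)$ for the right-hand side of \eqref{vdpp}: for $\mathpzc{V}\ge\bar{\mathpzc{V}}$, split $\mathfrak{J}(t,\z;\a)=\int_t^\eta\mathpzc{L}\,\d s+\mathfrak{J}\bigl(\eta,\Z(\eta;t,\z,\a);\a\bigr)$, bound the second term below by $\mathpzc{V}(\eta,\Z(\eta;t,\z,\a))$, and take the infimum; for $\mathpzc{V}\le\bar{\mathpzc{V}}$, given $\varepsilon>0$ pick $\a_1$ that is $\varepsilon$-optimal for $\bar{\mathpzc{V}}(t,\z)$, set $\bar\z=\Z(\eta;t,\z,\a_1)$, pick $\a_2$ that is $\varepsilon$-optimal for $\mathpzc{V}(\eta,\bar\z)$, and concatenate $\a=\a_1$ on $[t,\eta]$ and $\a_2$ on $(\eta,T]$ (the concatenation is measurable, hence in $\mathscr{U}$); the flow identity then gives $\mathpzc{V}(t,\z)\le\mathfrak{J}(t,\z;\a)\le\bar{\mathpzc{V}}(t,\z)+2\varepsilon$. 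No measurable selection is needed, since the $\varepsilon$-optimal controls are chosen at a single state.

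Finally, for \eqref{bddV} (part (iii)) I would split $|\mathpzc{V}(t_1,\z)-\mathpzc{V}(t_2,\x)|\le|\mathpzc{V}(t_1,\z)-\mathpzc{V}(t_1,\x)|+|\mathpzc{V}(t_1,\x)-\mathpzc{V}(t_2,\x)|$; the first term is $\le\omega_r(\|\z-\x\|_{\H})$ by part (i). For the second (say $t_1\le t_2$), apply the DPP \eqref{vdpp} with $\eta=t_2$: for each $\a(\cdot)$, $\bigl|\int_{t_1}^{t_2}\mathpzc{L}(\Z(s;t_1,\x,\a),\a(s))\,\d s\bigr|\le C_r(t_2-t_1)$ by \eqref{vh1} and \eqref{eqn-conv-2}, while $|\mathpzc{V}(t_2,\Z(t_2;t_1,\x,\a))-\mathpzc{V}(t_2,\x)|\le\omega_{r'}\bigl(\|\Z(t_2;t_1,\x,\a)-\x\|_{\H}\bigr)\le\omega_{r'}\bigl(\sqrt{C_r(t_2-t_1)}\bigr)$ by the $\H$-modulus of $\mathpzc{V}(t_2,\cdot)$ from part (i) and the near-initial-time estimate \eqref{ctsdep0.1}; taking the infimum over $\a(\cdot)$ yields a time-modulus, and combining gives \eqref{bddV}. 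I expect the main obstacle to be the modulus bookkeeping in part (i) — reconciling the $\V$-regularity required of $\mathpzc{L}$ with the merely $\mathrm{L}^2$-in-time $\V$-control of trajectory differences available from \eqref{ctsdep0} — and, relatedly, checking that \eqref{ctsdep0.1} (whose proof genuinely uses the structure of $\mathfrak{B}$ and $\mathfrak{C}$) is quantitative enough to drive the time-continuity; the remaining steps are routine.
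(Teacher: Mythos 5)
Your proposal is correct and follows exactly the route the paper intends: the paper gives no proof of its own but defers to the standard dynamic-programming argument of \cite{FGSSA1}, using the energy estimates of Proposition \ref{weLLp} and the continuous-dependence estimates of Proposition \ref{cts-dep-soln} as the only CBF-specific inputs, which is precisely how you have structured the argument (including the concave-modulus/Jensen step reconciling the $\V$-modulus of $\mathpzc{L}$ with the $\mathrm{L}^2$-in-time $\V$-control from \eqref{ctsdep0}, and the use of \eqref{ctsdep0.1} for the time increment). No discrepancies to report.
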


     \subsection{Viscosity solution}\label{viscPR}
     	We aim to establish that value function \eqref{valueF} is the solution to the HJBE \eqref{detHJB1}. 
     	It is important to note that proving the existence of a viscosity solution relies on the dynamic programming approach of the optimal control problem. Therefore, it is restricted to the HJBE \eqref{detHJB1}. Further, for the uniqueness of the viscosity solution, we need to prove the hardcore result of this work, the comparison principle (see Section \ref{compPR}). The comparison principle we prove for the following HJBE with more general Hamiltonian $\mathcal{F}$ satisfying certain assumptions (see Hypothesis \ref{hypF14} below):
     \begin{equation}\label{HJBE}
     	\left\{
     	\begin{aligned}
     		\mathpzc{u}_t-(\mu\mathcal{A}\z+\mathfrak{B}(\z)+
     		\beta\mathfrak{C}(\z),\mathcal{D} \mathpzc{u})+\mathcal{F}(t,\z,\mathcal{D} \mathpzc{u})&=0, \ \text{ for } (t,\z)\in(0,T)\times\H,\\
     		\mathpzc{u}(T,\z)&=\mathpzc{g}(\z), \ \text{ for } \ \z\in\H.
     	\end{aligned}
     	\right.
     \end{equation}
     
     
       The main ingredient in the viscosity solution theory is the choice of the test function. The following is the definition of a test function, which has been taken, with slight modification, from \cite{FGSSA1}:
     \begin{definition}\label{testD}
     	A function $\uppsi:(0,T)\times\H\to\R$ is termed a \emph{test function} if it can be decomposed as $$\uppsi(t,\z)=\upvarphi(t,\z)+h(t,\|\z\|_{\V}),$$ where the components satisfy the following:
     	
     	\textbf{Structure of $\upvarphi$.}   
     	 $\upvarphi\in\mathrm{C}^{1,1}((0,T)\times\H)$ and
     	 the partial derivative $\upvarphi_t$ and the derivative $\mathcal{D}\upvarphi$ are uniformly continuous on every closed subinterval $[\lambda,T-\lambda]\times\H$ for every $\lambda>0$.
     
     	\textbf{Structure of $h$:}  
     $h\in\mathrm{C}^{1,1}((0,T)\times\R)$ with the additional property that for each fixed $t\in(0,T)$, the function  $h(t,\cdot)$ is even, that is $h(t,r)=h(t,-r)$ and non-decreasing on $[0,+\infty)$.
     \end{definition}
     
     \begin{remark}
     1.) Although the norm $\|\z\|_{\V}$ is not differentiable at the origin $\boldsymbol{0}$, the function $h(t, \|\z\|_{\V})$ belongs to the class $\mathrm{C}^{1,1}((0, T) \times \V)$. Nevertheless, the lack of Fréchet differentiability in the Hilbert space $\H$ necessitates a careful interpretation of expressions involving $\mathcal{D} h$ and inner products such as $(\mu\mathcal{A}\z + \mathfrak{B}(\z) + \beta\mathfrak{C}(\z), \mathcal{D} h(t, \z))$. To address this, we define the derivative operator $\mathcal{D} h$ formally via the expression:   
     $$
     \mathcal{D} h(t, \z) := \frac{h_r(t, \|\z\|_{\V})}{\|\z\|_{\V}} \mathcal{A}\z,
     $$   
     where $h_r$ denotes the partial derivative of $h$ with respect to its second argument. We define $\mathcal{D}\psi := \mathcal{D}\varphi + \mathcal{D} h$, and for all $(t, \z) \in (0, T) \times \V_2$, the quantity $\mathcal{D}\psi(t, \z)$ is well-defined, as is the inner product $(\mu\mathcal{A}\z + \mathfrak{B}(\z) + \beta\mathfrak{C}(\z), \mathcal{D} \psi(t, \z))$.
     
     2.) From this point onward, we consider the specific form $h(t, \z) = \mathfrak{h}(t)\|\z\|_{\V}^2$, where $\mathfrak{h}(\cdot) \in \mathrm{C}^1(0, T)$ and satisfies $\mathfrak{h}(t) > 0$ for all $t \in (0, T)$. As discussed in \cite{FGSSA}, a more general form such as $h(t, \z) = \mathfrak{h}(t)(1 + \|\z\|_{\V}^2)^m$, for some $m \in \mathbb{N}$, may also be used. However, both forms are mathematically equivalent in the context of proving the Theorem \ref{comparison} (comparison principle) and Theroem \ref{extunqvisc} (the existence of viscosity solutions).
     
     3.) For the choice $h(t, \z) = \mathfrak{h}(t)\|\z\|_{\V}^2$, we compute the following Fréchet derivatives:
     \begin{align}
     	\mathcal{D}_{\z} \|\z\|_{\V}^2 &= 2(\mathcal{A} + \I)\z, \label{apyv1} \\
     	\mathcal{D}_{\z}(1 + \|\z\|_{\V}^2)^m &= 2m(\mathcal{A} + \I)\z (1 +\|\z\|_{\V}^2)^{m-1}. \label{apyv2}
     \end{align}
     However, while establishing the Theorem \ref{comparison} and proving the Theroem \ref{extunqvisc}, the multiplicative factor $(1 + \|\z\|_{\V}^2)^{m - 1}$ in \eqref{apyv2} does not influence the analysis, as the norm in $\V$ is bounded (see \cite[Theorems 5.11 and 6.1]{smtm1}). Therefore, in this work, we adopt test functions of the form $\upvarphi(t, \z) + \mathfrak{h}(t)\|\z\|_{\V}^2$, which satisfy all the requirements of Definition \ref{testD}.
     \end{remark}
     
     We now provide the definition of  a viscosity solution.  Here Hamiltonian $\mathcal{F}$ is not necessarily of the form \eqref{hamfun}. We assume that $\mathcal{F}:[0, T]\times\V\times\H\to\R$ is any function. 
     \begin{definition}\label{viscsoLndef}
     	Let $\mathpzc{u}:(0,T]\times\V\to\R$ be a weakly sequentially upper-semicontinuous (respectively, a lower-semicontinuous) function. We say that $\mathpzc{u}$ is a \emph{viscosity subsolution} (respectively, \emph{supersolution}) of \eqref{HJBE} if 
     	\begin{itemize}
     		\item $\mathpzc{u}(T,\wi\z)\leq h(\wi\z)$ (respectively, $\mathpzc{u}(T,\wi\z)\geq h(\wi\z)$) for all $\wi\z\in\V$;
     		\item whenever $\mathpzc{u}-\uppsi$ has global maximum (respectively, $\mathpzc{u}+\uppsi$ has a global minimum) at a point $(t,\z)\in(0,T)\times\V$ for every test function $\uppsi$, then $\z\in\V_2$ and 
     		\begin{align*}
     			\uppsi_t(t,\z)-
     			(\mu\mathcal{A}\z+ \mathfrak{B}(\z)+\beta\mathfrak{C}(\z),\mathcal{D}\uppsi(t,\z))
     			+\mathcal{F}(t,\z,\mathcal{D}\uppsi(t,\z))\geq0
     		\end{align*}
     		(respectively, 
     		\begin{align*}
     			-\uppsi_t(t,\z)+
     			(\mu\mathcal{A}\z+\mathfrak{B}(\z)+\beta\mathfrak{C}(\z),\mathcal{D}\uppsi(t,\z))
     			+\mathcal{F}(t,\z,-\mathcal{D}\uppsi(t,\z))\leq0.)
     		\end{align*}
     	\end{itemize}
     	A function $\mathpzc{u}:(0,T]\times\V\to\R$ which is both viscosity subsolution and viscosity supersolution is called \emph{viscosity solution} of \eqref{HJBE}. We also say that $\mathpzc{u}$ satisfies \eqref{HJBE} in the viscosity sense.
     \end{definition}

	 \section{The comparison principle: Uniqueness of Viscosity solution}\label{compPR}\setcounter{equation}{0}
	 
	  This subsection proves the comparison principle for the equation \eqref{HJBE}. It guarantees that, under appropriate conditions, any viscosity subsolution remains bounded above by any viscosity supersolution. As an important consequence, the comparison principle ensures the uniqueness of viscosity solutions.

	\begin{hypothesis}\label{hypF14}
		There exists a modulus of continuity $\omega$ and moduli $\omega_r$ such that for every $r>0,$ the hamiltonian $\mathcal{F}:[0,T]\times\V\times\H\to\R$ and the terminal cost $\mathpzc{g}:\H\to\R$ satisfies the following locally Lipschitz properties:
		\begin{align}
			|\mathcal{F}(t,\z,\p)-\mathcal{F}(t,\x,\p)|&\leq\omega_r(\|\z-\x\|_{\V})+\omega(\|\z-\x\|_{\V}\|\p\|_{\H}), \ \text{ if } \ \|\z\|_{\V}\leq r,\|\x\|_{\V}\leq r,\label{F1}\\
			|\mathcal{F}(t,\z,\p)-\mathcal{F}(t,\z,\q)|&\leq\omega((1+\|\z\|_{\V})\|\p-\q\|_{\H}),\label{F2}\\
			|\mathcal{F}(t,\z,\p)-\mathcal{F}(s,\z,\p)|&\leq\omega_r(|t-s|),\ \text{ if } \ \|\z\|_{\V}\leq r,\|\z\|_{\V}\leq r, \|\p\|_{\V}\leq r,\label{F3}\\
			|\mathpzc{g}(\z)-\mathpzc{g}(\x)|&\leq\omega_r(\|\z-\x\|_{\H}), \ \text{ if } \ \|\z\|_{\V}\leq r,\|\x\|_{\V}\leq r.\label{F4}
		\end{align}
	\end{hypothesis}
	
	Let us now state the comparison principle for viscosity solutions.
	\begin{theorem}\label{comparison}
		Suppose Hypothesis \ref{hypF14} is satisfied. Let $\mathpzc{u}:(0,T]\times\V\to\R$ be a viscosity subsolution and $\mathpzc{v}:(0,T]\times\V\to\R$ be a viscosity supersolution of \eqref{HJBE}. Assume further that there exists a constant $C>0$ and $k>0$ such that 
		\begin{align}\label{bdd}
			\mathpzc{u}(t,\z), -\mathpzc{v}(t,\z), |\mathpzc{g}(\z)|\leq C(1+\|\z\|_{\V}^k).
		\end{align}
     Then, $$\mathpzc{u}\leq \mathpzc{v}\ \text{ on }\ (0,T]\times\V.$$
	\end{theorem}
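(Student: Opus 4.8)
The plan is to establish Theorem \ref{comparison} by contradiction, via the variable-doubling technique for viscosity solutions of infinite-dimensional first-order HJBEs in the spirit of \cite{FGSSA,FGSSA1}, carrying along the additional convective and absorption terms peculiar to the CBF model. Assume $m:=\sup_{(0,T]\times\V}(\mathpzc{u}-\mathpzc{v})>0$. For small parameters $\epsilon,\gamma,\kappa>0$ (with $\kappa$ sent to $0$ last, and $2m_0>k$ for a chosen integer $m_0$), I would maximize over $\big((0,T]\times\V\big)^2$ the functional
\begin{align*}
\Phi(t,s,\z,\x):=\mathpzc{u}(t,\z)-\mathpzc{v}(s,\x)-\frac{\|\z-\x\|_{\H}^2}{2\epsilon}-\frac{|t-s|^2}{2\epsilon}-\kappa\big[(1+\|\z\|_{\V}^2)^{m_0}+(1+\|\x\|_{\V}^2)^{m_0}\big]-\frac{\gamma}{t}-\gamma(2T-t-s).
\end{align*}
The $\H$-quadratic and quadratic-in-time terms double the space and time variables; the $\kappa$-term, together with \eqref{bdd}, the compact embedding $\V\hookrightarrow\H$ and the weak sequential semicontinuity of $\mathpzc{u},\mathpzc{v}$, forces maximizing sequences of $\Phi$ to be $\V$-bounded and $\H$-precompact, so a maximizer exists; the terms $-\gamma/t$ and $-\gamma(2T-t-s)$ keep the maximizer in the interior of $(0,T)$ and away from $t=0$.

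Let $(\bar t,\bar s,\bar\z,\bar\x)$ realize $\max\Phi$. The standard penalization lemma — comparing $\Phi$ at the maximizer with its symmetrized diagonal values at $(\bar t,\bar t,\bar\z,\bar\z)$ and $(\bar s,\bar s,\bar\x,\bar\x)$ and using only semicontinuity and $\H$-compactness — gives $\epsilon^{-1}\|\bar\z-\bar\x\|_{\H}^2+\epsilon^{-1}|\bar t-\bar s|^2\to0$ as $\epsilon\to0$, with $\|\bar\z\|_{\V},\|\bar\x\|_{\V}$ bounded and $\max\Phi\to m'\geq m-o_\kappa(1)>0$; the terminal inequalities $\mathpzc{u}(T,\cdot)\leq\mathpzc{g}\leq\mathpzc{v}(T,\cdot)$ with \eqref{F4} then rule out $\bar t=T$ and $\bar s=T$, so $\bar t,\bar s\in(0,T)$. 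Freezing $(\bar s,\bar\x)$, the map $(t,\z)\mapsto\mathpzc{u}(t,\z)-\uppsi_1(t,\z)$, with $\uppsi_1$ equal to the $\H$-smooth part $\frac{1}{2\epsilon}\|\z-\bar\x\|_{\H}^2+\frac{1}{2\epsilon}|t-\bar s|^2+\frac{\gamma}{t}+\gamma(2T-t-\bar s)$ plus $h(t,\z)=\kappa(1+\|\z\|_{\V}^2)^{m_0}$ — a test function in the sense of Definition \ref{testD} — has a global maximum at $(\bar t,\bar\z)$, so the subsolution property forces $\bar\z\in\V_2$ and the corresponding inequality; symmetrically $(s,\x)\mapsto\mathpzc{v}(s,\x)+\uppsi_2(s,\x)$ has a global minimum at $(\bar s,\bar\x)$, giving $\bar\x\in\V_2$ and the supersolution inequality. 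Subtracting the two, the time-derivative part of the left side collapses to $-\gamma/\bar{t}^{\,2}-2\gamma<0$ (the $\epsilon^{-1}(\bar t-\bar s)$ pieces cancel), and it remains to bound below the ``elliptic'' and Hamiltonian contributions.

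Write $\p:=\epsilon^{-1}(\bar\z-\bar\x)$; by \eqref{frAA}--\eqref{apyv1}, $\mathcal{D}\uppsi_1(\bar t,\bar\z)=\p+q_1$ and $-\mathcal{D}\uppsi_2(\bar s,\bar\x)=\p-q_2$ with each $q_i$ a positive multiple of $(\mathcal{A}+\I)$ applied to $\bar\z$ (for $q_1$) and to $\bar\x$ (for $q_2$). The $\p$-part is $\epsilon^{-1}\langle\mu\mathcal{A}(\bar\z-\bar\x)+\mathfrak{B}(\bar\z)-\mathfrak{B}(\bar\x)+\beta(\mathfrak{C}(\bar\z)-\mathfrak{C}(\bar\x)),\bar\z-\bar\x\rangle$, which by dissipativity of $\mathcal{A}$, the bilinear estimate \eqref{3.4} and the monotonicity \eqref{monoC2} dominates $\epsilon^{-1}\big[\tfrac{\mu}{2}\|\nabla(\bar\z-\bar\x)\|_{\H}^2+(\text{nonnegative absorption terms})-\varrho\|\bar\z-\bar\x\|_{\H}^2\big]$; each $q_i$-part is a positive multiple of $\big(\mu\mathcal{A}\bar\z+\mathfrak{B}(\bar\z)+\beta\mathfrak{C}(\bar\z),(\mathcal{A}+\I)\bar\z\big)$ (resp. at $\bar\x$), which via $(\mathcal{A}\bar\z,(\mathcal{A}+\I)\bar\z)=\|\mathcal{A}\bar\z\|_{\H}^2+\|\nabla\bar\z\|_{\H}^2$, the identity $(\mathfrak{B}(\bar\z),(\mathcal{A}+\I)\bar\z)=(\mathfrak{B}(\bar\z),\mathcal{A}\bar\z)$ (by \eqref{syymB}) estimated with \eqref{syymB3}, and $(\mathfrak{C}(\bar\z),(\mathcal{A}+\I)\bar\z)=(\mathfrak{C}(\bar\z),\mathcal{A}\bar\z)+\|\bar\z\|_{\wi\L^{r+1}}^{r+1}\geq0$ (by \eqref{torusequ}) is bounded below by $-C\kappa(1+\|\bar\z\|_{\V}^2)^{m_0}=-o_\kappa(1)$. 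The dissipativity then serves a second time: all other terms being bounded, the subtracted inequality forces $\epsilon^{-1}\|\nabla(\bar\z-\bar\x)\|_{\H}^2=O(1)$, hence $\|\bar\z-\bar\x\|_{\V}\to0$, which makes the state-increment term $\omega_r(\|\bar\z-\bar\x\|_{\V})$ in \eqref{F1} small; together with \eqref{F2} for the increments $q_i$ (small in the $\V$-pairing relevant to the Hamiltonian) and \eqref{F3} for the time increment, the Hamiltonian difference $\mathcal{F}(\bar t,\bar\z,\p+q_1)-\mathcal{F}(\bar s,\bar\x,\p-q_2)\to0$ as $\epsilon,\gamma\to0$ and then $\kappa\to0$. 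Feeding everything back, $-\gamma/\bar{t}^{\,2}-2\gamma\geq-o(1)$, impossible for small $\epsilon,\gamma,\kappa$; hence $m\leq0$, i.e.\ $\mathpzc{u}\leq\mathpzc{v}$ on $(0,T]\times\V$.

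I expect the main obstacle to be the estimate of the $q_i$-terms together with the bootstrap yielding $\|\bar\z-\bar\x\|_{\V}\to0$: the $\V$-norm entering the test function produces the genuinely unbounded quantities $\mathcal{A}\bar\z,\mathcal{A}\bar\x$, which can be dominated only by exploiting the favorable signs of the viscous term $\mu\|\mathcal{A}\cdot\|_{\H}^2$ and of the absorption term. This is precisely where the CBF treatment departs from the NSE arguments of \cite{FGSSA,FGSSA1}: there either $(\mathfrak{B}(\z),\mathcal{A}\z)=0$ (zero-average case, as in \cite{FGSSA}) or the $\H$-norm test function makes $(\mathfrak{B}(\z),\z)=0$ (as in \cite{FGSSA1}), whereas here the non-vanishing term $(\mathfrak{B}(\z),(\mathcal{A}+\I)\z)$ must be absorbed, which succeeds only because on $\mathbb{T}^d$ the Helmholtz projection commutes with $-\Delta$ (validating \eqref{toreq} and hence \eqref{torusequ}) and the diffusion plus absorption dominate the convective nonlinearity precisely in the ranges $r>3$ and $r=3$ with $2\beta\mu\geq1$ of Table \ref{Table1}. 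A related technicality, threaded throughout, is that $\mathcal{A}$ is not invertible ($0$ is an eigenvalue), forcing one to work with $\mathcal{A}+\I$ — and hence with the Fréchet derivatives \eqref{frAA}, \eqref{apyv1} — rather than with $\mathcal{A}$.
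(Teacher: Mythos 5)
Your overall strategy -- doubling the variables with an $\H$-quadratic penalization plus a $\V$-norm confinement term, invoking the sub/supersolution inequalities at the maximizer, and exploiting the identities \eqref{torusequ}, \eqref{syymB3}, \eqref{3.4}, \eqref{monoC2} so that diffusion and absorption dominate the convective term -- is the same family of argument as the paper's. But there is one structural choice where you deviate, and it opens a genuine gap. You take the confinement term with a \emph{constant} coefficient, $\kappa\big[(1+\|\z\|_{\V}^2)^{m_0}+(1+\|\x\|_{\V}^2)^{m_0}\big]$, and your contradiction rests on the claim that the resulting error terms are $-C\kappa(1+\|\bar\z\|_{\V}^2)^{m_0}=-o_\kappa(1)$. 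That claim is not justified: $\bar\z$ depends on $\kappa$, and since \eqref{bdd} only bounds $\mathpzc{u}-\mathpzc{v}$ from above by $C(1+\|\z\|_{\V}^k)$ (so the supremum being approached may be infinite and the maximizers may escape in $\V$ as $\kappa\downarrow 0$), the standard penalization lemma that would give $\kappa(1+\|\bar\z\|_{\V}^2)^{m_0}\to0$ does not apply. Moreover, these $O\big(\kappa(1+\|\bar\z\|_{\V}^2)^{m_0}\big)$ errors arise twice and are unavoidable in your scheme: once from the lower-order piece $\varrho\|\nabla\bar\z\|_{\H}^2$ in \eqref{syymB3}, and once from the Hamiltonian perturbation \eqref{F2} applied to $q_i$ -- note that $\|q_1\|_{\H}\sim\kappa(1+\|\bar\z\|_{\V}^2)^{m_0-1}\|(\mathcal{A}+\I)\bar\z\|_{\H}$ is \emph{not} small (there is no quantitative bound on $\|(\mathcal{A}+\I)\bar\z\|_{\H}$, only the qualitative fact $\bar\z\in\V_2$), so it must be absorbed into the viscous term $\mu\kappa(\cdots)\|(\mathcal{A}+\I)\bar\z\|_{\H}^2$ via Young's inequality, which again leaves an $O\big(\kappa(1+\|\bar\z\|_{\V}^2)^{m_0}\big)$ remainder. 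Your phrase ``small in the $\V$-pairing relevant to the Hamiltonian'' is not an argument here.

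The paper closes exactly this gap by making the confinement coefficient \emph{time-dependent}: $\delta e^{\mathpzc{k}_\upgamma(T-t)}\|\z\|_{\V}^2$. Its time derivative contributes $-\delta\mathpzc{k}_\upgamma e^{\mathpzc{k}_\upgamma(T-t)}\|\bar\z\|_{\V}^2$ to the subsolution inequality, and choosing $\mathpzc{k}_\upgamma=1+2\big(2C_\upgamma^2/\mu+2\mu+\mathfrak{a}_1\big)$ this single term swallows \emph{all} of the $O\big(\delta\|\bar\z\|_{\V}^2\big)$ errors (from \eqref{F2cal}, from $\varrho_1\|\bar\z\|_{\V}^2$ in \eqref{rg3c}--\eqref{re3c}, etc.) for every \emph{fixed} $\delta$, leaving only good-signed quantities in \eqref{viscdef6}; the contradiction $0\le-\upgamma/(4T^2)$ is then reached by sending $\eta,\lambda\to0$ with $\upgamma,\delta$ fixed, so no statement about the size of $\delta\|\bar\z\|_{\V}^2$ at the maximum is ever needed. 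To repair your proof you would either have to import this exponential weight (or an equivalent Gr\"onwall-type device in the test function) or supply a uniform-in-$\kappa$ $\V$-bound on the maximizers, which \eqref{bdd} alone does not give. The remaining differences (your $-\gamma/t-\gamma(2T-t-s)$ versus the paper's $\upgamma/t$-shift of $\mathpzc{u},\mathpzc{v}$ producing strict inequalities $\pm\upgamma/T^2$; your slightly circular ``$\|\bar\z-\bar\x\|_{\V}\to0$ hence $\omega_r$ small'' versus the paper's $\inf_{r>0}(\mu r^2/4\lambda-D r)\to0$ argument) are cosmetic and fixable.
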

	
	 \begin{proof}
	Since $\mathpzc{u}$ is a viscosity subsolution and $\mathpzc{v}$ is a viscosity supersolution of \eqref{HJBE}, therefore by definition, we have 
			\begin{equation}\label{posneg}
			\left\{
			\begin{aligned}
				\lim\limits_{t\uparrow T} \ (\mathpzc{u}(t,\z)-g(\z))^+&=0 \\
				\text{ and } 
				\lim\limits_{t\uparrow T} \ (\mathpzc{v}(t,\z)-g(\z))^-&=0,
			\end{aligned}
			\right.
			\end{equation}
			uniformly on bounded subsets of $\V$.
			 For $\upgamma>0$, we define
		\begin{align*}
			\mathpzc{u}_{\upgamma}(t,\z):=\mathpzc{u}(t,\z)-\frac{\upgamma}{t}\  \text{ and } \ \mathpzc{v}_{\upgamma}(t,\z):=\mathpzc{v}(t,\z)+\frac{\upgamma}{t}. 
		\end{align*}
        Then, $\mathpzc{u}_{\upgamma}$ is a viscosity subsolution of 
		\begin{equation}\label{newsub}
		\left\{
		\begin{aligned}
			(\mathpzc{u}_\upgamma)_t-
		(\mu\A\z+\alpha\z+\mathfrak{B}(\z)+\beta\mathfrak{C}(\z),\mathcal{D} \mathpzc{u}_{\upgamma})+\mathcal{F}(t,\z,\mathcal{D} \mathpzc{u}_{\upgamma}) &\geq\frac{\upgamma}{T^2},\\
		\mathpzc{u}_\upgamma(T,\z)&=g(\z)-\frac{\upgamma}{T},
		\end{aligned}
		\right.
		\end{equation}
		and $\mathpzc{v}_{\upgamma}$ is a viscosity supersolution of 
			\begin{equation}\label{newsup}
			\left\{
			\begin{aligned}
				(\mathpzc{v}_\upgamma)_t-
				(\mu\A\z+\alpha\z+\mathfrak{B}(\z)+\beta\mathfrak{C}(\z),\mathcal{D} \mathpzc{v}_{\upgamma})+\mathcal{F}(t,\z,\mathcal{D} \mathpzc{v}_{\upgamma})& \leq-\frac{\upgamma}{T^2},\\
				\mathpzc{v}_\upgamma(T,\z)&=g(\z)+\frac{\upgamma}{T}.
			\end{aligned}
			\right.
		\end{equation}
		Let us define the function $\Phi$ on $(0,T]\times\H$ for $\lambda,\delta,\eta>0$, by
		\begin{equation*}
			\Phi(t,s,\z,\x)=\left\{
			\begin{aligned}
			&\mathpzc{u}_{\upgamma}(t,\z)-\mathpzc{v}_{\upgamma}(s,\x)-
				\frac{\|\z-\x\|_{\H}^2}{2\lambda}-\delta e^{\mathpzc{k}_\upgamma(T-t)} \|\z\|_{\V}^2\\&-\delta e^{\mathpzc{k}_\upgamma(T-s)} \|\x\|_{\V}^2-\frac{(t-s)^2}{2\eta}, \  &&\text{ if } \ \z,\x\in\V\\
				&-\infty, \  &&\text{ if } \ \z,\x\notin\V,
			\end{aligned}
			\right.
		\end{equation*}
	where the constant $\mathpzc{k}_\upgamma$ to be chosen later. Note that $\Phi\to-\infty$ as $\max\{\|\z\|_{\V},\|\x\|_{\V}\}\to+\infty$.
	We now proceed the proof into following number of steps:
		\vskip 0.2cm
		\noindent
		\textbf{Step-I:} \emph{$\Phi$ has global maxima over $(0,T]\times(0,T]\times\H\times\H$.}  
		 We first claim that the function $\mathpzc{u}_{\upgamma}(t,\z)-\delta e^{\mathpzc{k}_\upgamma(T-t)}\|\z\|_{\V}^2 $ is weakly sequentially upper semicontinuous on $ (0,T)\times\H.$
	
		Suppose it is not. Then, there exists sequences $(t_n)_{n\geq1}$ in $(0,T)$ and $(\z_n)_{n\geq1}$ in $\H$ with $t_n\to t\in(0,T)$ and $\z_n\rightharpoonup\z\in\H$ such that
		\begin{align}\label{contra}
			\limsup\limits_{n\to\infty}\left(\mathpzc{u}_{\upgamma}(t_n,\z_n)-\delta e^{\mathpzc{k}_\upgamma(T-t_n)}\|\z_n\|_{\V}^2\right)>
			\mathpzc{u}_{\upgamma}(t,\z)-\delta e^{\mathpzc{k}_\upgamma(T-t)} \|\z\|_{\V}^2.
		\end{align}
		Now, if $\liminf\limits_{n\to\infty}\|\z_n\|_{\V}=+\infty$, then \eqref{contra} is impossible due to \eqref{bdd}. Therefore, $\liminf\limits_{n\to\infty}\|\z_n\|_{\V}<+\infty$ and there exists a subsequence (still denoted by $(t_n,\z_n)$) such that $\limsup\limits_{n\to\infty}\|\z_n\|_{\V}<+\infty$. By using the Banach-Alaoglu theorem, we then have $\z_n\rightharpoonup\z$ in $\V$ (along a subsequence), and therefore, we have $$\liminf_{n\to\infty}\delta e^{\mathpzc{k}_\upgamma(T-t_n)} \|\z_n\|_{\V}^2\geq \delta e^{\mathpzc{k}_\upgamma(T-t)}\|\z\|_{\V}^2. $$
		Moreover, from \eqref{contra}, we further have
		\begin{align*}
	\limsup\limits_{n\to\infty}\mathpzc{u}_{\upgamma}(t_n,\z_n)>\mathpzc{u}_{\upgamma}(t,\z),
		\end{align*}
		 which is a contradiction.
		  In a similar way, we can prove the weak sequential lower semicontinuity of $\mathpzc{v}_{\upgamma}(s,\x)-\delta e^{\mathpzc{k}_\upgamma(T-s)} \|\x\|_{\V}^2$ on $(0,T)\times\H$.  Finally, the definition of viscosity solution and the application of Weierstrass theorem yields that $\Phi$ attains a global maxima over $(0, T]\times(0, T]\times\H\times\H$ at some point $(\bar{t},\bar{s},\bar{\z},\bar{\x})\in(0, T]\times(0, T]\times\V\times\V$. One can assume the points of maxima to be strict maxima also. Furthermore, for a fixed $\delta>0$, the points $\bar{\z}$ and $\bar{\x}$ are bounded in $\V$  independently of $\lambda$, provided $k\leq3$.

		 In view of \cite[Theorem 3.50]{GFAS}, we have the following limits:
		\begin{align}\label{copm1}
			\lim\limits_{\eta\to0}\frac{(\bar{t}-\bar{s})^2}{2\eta}&=0 \ \text{ for fixed } \ \delta>0, \lambda>0,
		\end{align}
		and
		\begin{align}\label{copm2}
			\lim\limits_{\lambda\to0}\limsup\limits_{\eta\to0} \frac{\|\bar{\z}-\bar{\x}\|_{\H}^2}{2\lambda}&=0 \ \text{ for fixed } \ \delta>0.
		\end{align}
	  We assume that $\mathpzc{u}\not\leq\mathpzc{v}$ on $(0,T]\times\V$. Furthermore, the definition of viscosity solution yields $\bar{\z},\bar{\x}\in\V_2$, and from \eqref{copm1}-\eqref{copm2}, \eqref{F4} and \eqref{posneg},  it follows that for small $\upgamma$ and $\delta$, we have $\bar{t},\bar{s}<T$ if $\eta$ and $\lambda$ are sufficiently small (see \cite{smtm1} for detailed explanation).

	   \vskip 0.2cm
		\noindent
		\textbf{Step-II:} \emph{Applying definition of viscosity solution.} 
		Note that, the function 
		\begin{align*}
			(t,\z)\mapsto\Phi(t,\bar{s},\z,\bar{\x})&=\mathpzc{u}_{\upgamma}(t,\z)-\mathpzc{v}_{\upgamma}(\bar{s},\bar{\x})-
			\frac{\|\z-\bar{\x}\|_{\H}^2}{2\lambda}-\delta e^{\mathpzc{k}_\upgamma(T-t)} \|\z\|_{\V}^2\\&\quad-\delta e^{\mathpzc{k}_\upgamma(T-\bar{s})} \|\bar{\x}\|_{\V}^2-\frac{(t-\bar{s})^2}{2\eta}
		\end{align*} 
		has a global maximum at $(\bar{t},\bar{\z})$ in $(0,T)\times\H$. Then using the fact that $u_{\upgamma}$ is a viscosity subsolution, we have
		\begin{align}\label{supsoLdef}
        &-\delta \mathpzc{k}_\upgamma e^{\mathpzc{k}_\upgamma(T-\bar{t})}   \|\bar{\z}\|_{\V}^2+\frac{\bar{t}-\bar{s}}{\eta}
	    -
         \bigg(\mu\mathcal{A}\bar{\z},\frac{1}{\lambda}(\bar{\z}-\bar{\x})
         +2\delta e^{\mathpzc{k}_\upgamma(T-\bar{t})} (\mathcal{A}+\I)\bar{\z}\bigg)
          \nonumber\\&\quad-
         \bigg(\mathfrak{B}(\bar{\z})+\beta\mathfrak{C}(\bar{\z}),\frac{1}{\lambda}(\bar{\z}-\bar{\x})
         +2\delta e^{\mathpzc{k}_\upgamma(T-\bar{t})}(\mathcal{A}+\I)\bar{\z}\bigg)
        \nonumber\\&\quad+
         F\bigg(\bar{t},\bar{\z},\frac{1}{\lambda}(\bar{\z}-\bar{\x})
         +2\delta e^{\mathpzc{k}_\upgamma(T-\bar{t})}(\mathcal{A}+\I)\bar{\z}\bigg)
         \nonumber\\&\geq\frac{\upgamma}{T^2}.
		\end{align}
	 From assumption \eqref{F2} of Hypothesis \ref{hypF14}, we calculate
	\begin{align}\label{F2cal}
    &\bigg|F\big(\bar{t},\bar{\z},\frac{1}{\lambda}(\bar{\z}-\bar{\x})+2\delta e^{\mathpzc{k}_\upgamma(T-\bar{t})}(\mathcal{A}+\I)\bar{\z}\big)
   -F\big(\bar{t},\bar{\z},\frac{1}{\lambda}(\bar{\z}-\bar{\x})\big)\bigg| \nonumber\\&\leq
	\omega\left((1+\|\bar{\z}\|_{\V})\times2\delta e^{\mathpzc{k}_\upgamma(T-\bar{t})}\|(\mathcal{A}+\I)\bar{\z}\|_{\H}\right)
	\nonumber\\&\leq
	\frac{\upgamma}{2T^2}+2\delta e^{\mathpzc{k}_\upgamma(T-\bar{t})}
	\underbrace{C_\upgamma(1+\|\bar{\z}\|_{\V})\|(\mathcal{A}+\I)\bar{\z}\|_{\H}}_{\text{Cauchy-Schwarz}}
	\nonumber\\&\leq\frac{\upgamma}{2T^2}+
	\frac{2C_\upgamma^2 \delta}{\mu} e^{\mathpzc{k}_\upgamma(T-\bar{t})}(1+\|\bar{\z}\|_{\V}^2)+\mu\delta e^{\mathpzc{k}_\upgamma(T-\bar{t})} \|(\mathcal{A}+\I)\bar{\z}\|_{\H}^2.
		\end{align}
		Substituting \eqref{F2cal} into \eqref{supsoLdef} and rearranging the terms to get
			\begin{align}\label{supsoLdef1}
			&-\delta \mathpzc{k}_\upgamma e^{\mathpzc{k}_\upgamma(T-\bar{t})} \|\bar{\z}\|_{\V}^2+
			\frac{2C_\upgamma^2 \delta}{\mu} e^{\mathpzc{k}_\upgamma(T-\bar{t})}(1+\|\bar{\z}\|_{\V}^2)
			+\frac{\bar{t}-\bar{s}}{\eta}
			\nonumber\\&\quad-
			\bigg(\mu(\mathcal{A}+\I)\bar{\z}-\mu\bar{\z}+\mathfrak{B}(\bar{\z})+\mathfrak{C}(\bar{\z}),\frac{1}{\lambda}(\bar{\z}-\bar{\x})\bigg)
			\nonumber\\&\quad-
			\delta e^{\mathpzc{k}_\upgamma(T-\bar{t})}
			\underbrace{\big(\mu(\mathcal{A}+\I)\bar{\z}-2\mu\bar{\z}+2\mathfrak{B}(\bar{\z})+2\mathfrak{C}(\bar{\z}),(\mathcal{A}+\I)\bar{\z}\big)}_{I_1}
			\nonumber\\&\quad+
			F\big(\bar{t},\bar{\z},\frac{1}{\lambda}(\bar{\z}-\bar{\x})\big)
			\nonumber\\&\geq\frac{\upgamma}{2T^2}.
		\end{align}

	 Let us now estimate $I_1$ as follows:
	\vskip 2mm
	\noindent
	\textbf{Case-I of Table \ref{Table1}.}
	From \eqref{syymB3} and \eqref{torusequ}, we infer
	\begin{align}\label{rg3c}
		-&\mu\|(\mathcal{A}+\I)\bar{\z}\|_{\H}^2
		+2\mu\|\bar{\z}\|_{\V}^2-
		2(\mathfrak{B}(\bar{\z})+\beta\mathfrak{C}(\bar{\z}),
		(\mathcal{A}+\I)\bar{\z})\nonumber\\&
		\leq-\frac{\mu}{2}\|(\mathcal{A}+\I)\bar{\z}\|_{\H}^2-
		\beta\||\bar{\z}|^{\frac{r-1}{2}}\nabla\bar{\z}\|_{\H}^2+ 2(\mu+\varrho_1)\|\bar{\z}\|_{\V}^2-
		2\beta\|\bar{\z}\|_{\wi\L^{r+1}}^{r+1},
	\end{align}
	where $\varrho_1=\frac{r-3}{\mu(r-1)}\left[\frac{4}{\beta\mu (r-1)}\right]^{\frac{2}{r-3}}$.
	\vskip 2mm
	\noindent
\textbf{Case-II of Table \ref{Table1}.}
	On employing the Cauchy-Schwarz and Young's inequalities, we calculate
	\begin{align*}
		|(\mathfrak{B}(\bar{\z}),(\mathcal{A}+\I)\bar{\z})|&\leq\||\bar{\z}|\nabla\bar{\z}\|_{\H}
		\|(\mathcal{A}+\I)\bar{\z}\|_{\H}
	\leq\frac{\theta\mu}{2}\|(\mathcal{A}+\I)\bar{\z}\|_{\H}^2+
		\frac{1}{2\theta\mu}\||\bar{\z}|\nabla\bar{\z}\|_{\H}^2,
	\end{align*}
	for any $0<\theta<1.$ For $r=3$, we write \eqref{toreq} as
	\begin{align*}
		(\mathfrak{C}(\bar{\z}),(\mathcal{A}+\I)\bar{\z})=\||\bar{\z}|\nabla\bar{\z}\|_{\H}^{2}+ \frac{1}{2}\|\nabla|\bar{\z}|^2\|_{\H}^{2}+\|\bar{\z}\|_{\wi\L^{r+1}}^{r+1}.
	\end{align*}
	From \eqref{syymB3} and \eqref{torusequ}, we infer
	\begin{align}\label{re3c}
		-&\mu\|(\mathcal{A}+\I)\bar{\z}\|_{\H}^2+2\mu\|\bar{\z}\|_{\V}^2-
		2(\mathfrak{B}(\bar{\z})+\beta\mathfrak{C}(\bar{\z}),(\mathcal{A}+\I)\bar{\z})
		\nonumber\\&\leq -\mu(1-\theta)\|(\mathcal{A}+\I)\bar{\z}\|_{\H}^2+\mu\|\bar{\z}\|_{\V}^2-
		\left(2\beta-\frac{1}{\theta\mu}\right)\||\bar{\z}|\nabla\bar{\z}\|_{\H}^2-2\beta\|\bar{\z}\|_{\wi\L^{r+1}}^{r+1}.
	\end{align}

	On utilizing \eqref{rg3c}-\eqref{re3c} in \eqref{supsoLdef1}, we obtain
	\begin{align}\label{viscdef3}
	&\frac{\bar{t}-\bar{s}}{\eta}+\left(-\mathpzc{k}_\upgamma e^{\mathpzc{k}_\upgamma(T-\bar{t})}\|\bar{\z}\|_{\V}^2 +
		\frac{2C_\upgamma^2\delta}{\mu}(1+\|\bar{\z}\|_{\V}^2)+2 (\mu+\varrho_1)\|\bar{\z}\|_{\V}^2\right)
		\delta e^{\mathpzc{k}_\upgamma(T-\bar{t})}
		\nonumber\\&\quad-
		\bigg(\mu(\mathcal{A}+\I)\bar{\z}-\mu\bar{\z}+\mathfrak{B}(\bar{\z})+\mathfrak{C}(\bar{\z}),\frac{1}{\lambda}(\bar{\z}-\bar{\x})\bigg)+
		\mathcal{F}\big(\bar{t},\bar{\z},\frac{1}{\lambda}(\bar{\z}-\bar{\x})\big)
		\nonumber\\&\geq\frac{\upgamma}{2T^2}+\mathfrak{a}_3 \delta e^{\mathpzc{k}_\upgamma(T-\bar{t})} \||\bar{\z}|^{\frac{r-1}{2}}\nabla\bar{\z}\|_{\H}^2
		\nonumber\\&\quad+
		2\beta\delta e^{\mathpzc{k}_\upgamma(T-\bar{t})} \|\bar{\z}\|_{\wi\L^{r+1}}^{r+1}+
		\mathfrak{a}_2\mu\delta e^{\mathpzc{k}_\upgamma(T-\bar{t})} \|(\mathcal{A}+\I)\bar{\z}\|_{\H}^2,
	\end{align}
	where
	\begin{equation}\label{e3cbf1}
		\mathfrak{a}_2:=
		\left\{
		\begin{aligned}
			\frac12, \ \ &\text{for Case-I of Table \ref{Table1}},\\
			1-\theta, \ \ &\text{for Case-II of Table \ref{Table1}}.
		\end{aligned}
		\right.
	\end{equation}
	and 
	where
	\begin{equation}\label{e3cbf}
		\mathfrak{a}_3:=
		\left\{
		\begin{aligned}
			\beta, \ \ &\text{for Case-I of Table \ref{Table1}},\\
			2\beta-\frac{1}{\theta\mu}, \ \ &\text{for Case-II of Table \ref{Table1}}.
		\end{aligned}
		\right.
	\end{equation}
  	Let us now choose $$\mathpzc{k}_\upgamma=1+2\left(\frac{2 C_\upgamma^2 }{\mu}+2\mu +\mathfrak{a}_1
  \right),$$  where
  \begin{equation*}
  	\mathfrak{a}_1:=
  	\left\{
  	\begin{aligned}
  		2\varrho_1, \ \ &\text{for Case-I of Table \ref{Table1}},\\
  		0, \ \ &\text{for Case-II of Table \ref{Table1}}.
  	\end{aligned}
  	\right.
  \end{equation*} 
   Along with this, \eqref{viscdef3} gives 
	\begin{align}\label{viscdef4}
		&\frac{\bar{t}-\bar{s}}{\eta}-\frac{\delta}{2}\mathpzc{k}_\upgamma e^{\mathpzc{k}_\upgamma(T-\bar{t})} \|\bar{\z}\|_{\V}^2+\frac{\mathpzc{k}_\upgamma-1}{2}e^{\mathpzc{k}_\upgamma T}
	   -\mu\bigg((\mathcal{A}+\I)\bar{\z},\frac{\bar{\z}-\bar{\x}}{\lambda}\bigg)
	   +\mu\bigg(\bar{\z},\frac{\bar{\z}-\bar{\x}}{\lambda}\bigg) \nonumber\\&\quad-
		\bigg(\mathfrak{B}(\bar{\z})+\beta\mathfrak{C}(\bar{\z}),\frac{\bar{\z}-\bar{\x}}{\lambda}
		\bigg)+\mathcal{F}\bigg(\bar{t},\bar{\z},
		\frac{\bar{\z}-\bar{\x}}{\lambda}\bigg)
		\geq\frac{\upgamma}{2T^2}.
	\end{align}
	Similarly, the function 
	\begin{align*}
		(s,\x)\mapsto-\Phi(\bar{t},s,\bar{\z},\x)&=\mathpzc{v}_{\upgamma}(s,\x)-\mathpzc{u}_{\upgamma}(\bar{t},\bar{\z})+
		\frac{\|\bar{\z}-\x\|_{\H}^2}{2\lambda}+\delta e^{\mathpzc{k}_\upgamma(T-\bar{t})} \|\bar{\z}\|_{\V}^2\\&\quad+\delta e^{\mathpzc{k}_\upgamma(T-s)} \|\x\|_{\V}^2+\frac{(\bar{t}-s)^2}{2\eta}
	\end{align*} 
	has a global minimum at $(\bar{s},\bar{\x})$ in $(0,T)\times\H$.
	Similarly, one can obtain
	\begin{align}\label{viscdef5}
		&\frac{\bar{t}-\bar{s}}{\eta}+\frac{\delta}{2}\mathpzc{k}_\upgamma e^{\mathpzc{k}_\upgamma(T-\bar{s})} \|\bar{\x}\|_{\V}^2
		-\frac{\mathpzc{k}_\upgamma-1}{2}e^{\mathpzc{k}_\upgamma T}
	-\mu\bigg((\mathcal{A}+\I)\bar{\x},\frac{\bar{\z}-\bar{\x}}{\lambda}\bigg)
	+\mu\bigg(\bar{\x},\frac{\bar{\z}-\bar{\x}}{\lambda}\bigg) \nonumber\\&\quad-
	\bigg(\mathfrak{B}(\bar{\x})+\beta\mathfrak{C}(\bar{\x}),\frac{\bar{\z}-\bar{\x}}{\lambda}
	\bigg)+
	\mathcal{F}\bigg(\bar{s},\bar{\x},\frac{\bar{\z}-\bar{\x}}{\lambda}\bigg)
	\nonumber\\&\leq-\frac{\upgamma}{2T^2}.
	\end{align}
	On combining \eqref{viscdef4}-\eqref{viscdef5}, we find
	\begin{align}\label{viscdef6}
		&\frac{\delta}{2} \|\bar{\z}\|_{\V}^2+\frac{\delta}{2} \|\bar{\x}\|_{\V}^2
		+\frac{\mu}{\lambda}\|\nabla(\bar{\z}-\bar{\x})\|_{\H}^2
	+
		\bigg(\mathfrak{B}(\bar{\z})-\mathfrak{B}(\bar{\x}),\frac{\bar{\z}-\bar{\x}}{\lambda}\bigg)	\nonumber\\&\quad+\beta
		\bigg(\mathfrak{C}(\bar{\z})-\mathfrak{C}(\bar{\x}),\frac{\bar{\z}-\bar{\x}}{\lambda}\bigg)
		+\mathcal{F}\bigg(\bar{t},\bar{\z},\frac{\bar{\z}-\bar{\x}}{\lambda}\bigg)-
		\mathcal{F}\bigg(\bar{s},\bar{\x},\frac{\bar{\z}-\bar{\x}}{\lambda}\bigg)
		\nonumber\\&\leq-\frac{\upgamma}{T^2}.
	\end{align}
	 \vskip 0.2cm
	\noindent
	\textbf{Step-III:} \emph{Conclusion.} 
	For fixed $\upgamma$ and $\delta$, we have $\|\bar{\z}\|_{\V}, \|\bar{\x}\|_{\V}\leq R_\delta$ for some $R_\delta>0$. Let $D_{\upgamma,\delta}>0$ be such that
	\begin{align*}
	\omega_{R_\delta}(s)\leq\frac{\upgamma}{4T^2}+D_{\upgamma,\delta}s.
	\end{align*}
	In view of \eqref{F1} and \eqref{F3}, we calculate
	\begin{align}\label{Fhm}
     &\bigg|\mathcal{F}\bigg(\bar{t},\bar{\z},\frac{\bar{\z}-\bar{\x}}{\lambda}
     \bigg)-
	\mathcal{F}\bigg(\bar{s},\bar{\x},\frac{\bar{\z}-\bar{\x}}{\lambda}\bigg)
	\bigg|
	\nonumber\\&\leq
	\omega_{R_{\delta,\lambda}}(|\bar{t}-\bar{s}|)
	+\omega_{R_{\delta}}(\|\bar{\z}-\bar{\x}\|_{\V})
	+\omega\left(\|\bar{\z}-\bar{\x}\|_{\V}\frac{\|\bar{\z}-\bar{\x}\|_{\H}}
	{\lambda}\right)\nonumber\\&\leq
	\omega_{R_{\delta,\lambda}}(|\bar{t}-\bar{s}|)+\frac{3\upgamma}{4T^2}+D_{\upgamma,\delta}\|\bar{\z}-\bar{\x}\|_{\V}+C_\upgamma
	\|\bar{\z}-\bar{\x}\|_{\V}\frac{\|\bar{\z}-\bar{\x}\|_{\H}}{\lambda}
	\nonumber\\&\leq\omega_{R_{\delta,\lambda}}(|\bar{t}-\bar{s}|)+\frac{3\upgamma}{4T^2}+D_{\upgamma,\delta}
	\|\bar{\z}-\bar{\x}\|_{\V}+\frac{C_\upgamma^2} {\mu\lambda}\|\bar{\z}-\bar{\x}\|_{\H}^2
	+\frac{\mu}{4\lambda}\|\bar{\z}-\bar{\x}\|_{\V}^2.
	\end{align}
		 From \eqref{3.4} and \eqref{monoC2}, we calculate
	\begin{align}\label{rg3c1}
	&\frac{1}{\lambda}(\mathfrak{B}(\bar{\z})-\mathfrak{B}(\bar{\x}),\bar{\z}-\bar{\x})+\frac{\beta}{\lambda}
	(\mathfrak{C}(\bar{\z})-\mathfrak{C}(\bar{\x}),\bar{\z}-\bar{\x})\nonumber\\&\geq-
	\frac{\mu}{4\lambda}\|\nabla(\bar{\z}-\bar{\x})\|_{\H}^2 -\frac{\varrho_3}{\lambda}\|\bar{\z}-\bar{\x}\|_{\H}^2,
	\end{align}
where $\varrho_3:=\frac{r-3}{\mu(r-1)}\left[\frac{4}{\beta\mu (r-1)}\right]^{\frac{2}{r-3}}$. Combining \eqref{Fhm} and \eqref{rg3c1}, using in \eqref{viscdef6} and then, since $\bar{\z}$ and $\bar{\x}$ are in $\V$, we conclude that
	\begin{align}\label{viscdef7}
	&\frac{\delta}{2}\|\bar{\z}\|_{\V}^2+\frac{\delta}{2}\|\bar{\x}\|_{\V}^2
	+\frac{\mu}{2\lambda}\|\bar{\z}-\bar{\x}\|_{\V}^2
	\nonumber\\&\leq-\frac{\upgamma}{4T^2}+
	D_{\upgamma,\delta}\|\bar{\z}-\bar{\x}\|_{\V}+\omega_{R_{\delta,\lambda}}(|\bar{t}-\bar{s}|)+\left(\frac{C_\upgamma^2}{\mu\lambda}+\frac{3\mu}{4\lambda}+\frac{\varrho_3}{\lambda}\right)\|\bar{\z}-\bar{\x}\|_{\H}^2.
   \end{align}
   By using \eqref{copm1}-\eqref{copm2}, we finally have
  \begin{align*}
  	\frac{\mu}{2\lambda}\|\bar{\z}-\bar{\x}\|_{\V}^2-
  	D_{\upgamma,\delta}
  	\|\bar{\z}-\bar{\x}\|_{\V}
  \leq-\frac{\upgamma}{4T^2}+
  	\upsigma_2(\eta,\lambda;\delta,\upgamma),
  \end{align*}
	where, for fixed $\upgamma,\delta$, we have $\limsup\limits_{\lambda\to0}
	\limsup\limits_{\eta\to0}\upsigma_2(\eta,\lambda;\delta,\upgamma)=0$.
On taking the infimum, we obtain
	\begin{align}\label{contd1}
     \inf\limits_{\|\bar{\z}-\bar{\x}\|_{\V}>0}  
     \left\{\frac{\mu}{2\lambda}\|\bar{\z}-\bar{\x}\|_{\V}^2-
	D_{\upgamma,\delta}
	\|\bar{\z}-\bar{\x}\|_{\V}\right\}\leq-\frac{\upgamma}{4T^2}+
	\upsigma_2(\eta,\lambda;\delta,\upgamma).
	\end{align} 
	Now on taking $\limsup\limits_{\lambda\to0}\limsup\limits_{\eta\to0}$ in \eqref{contd1} and using the fact that $$\lim\limits_{\lambda\to0}\inf\limits_{r>0}\left(\frac{\mu r^2}{4\lambda}-D_{\upgamma,\delta}r\right)=0,$$
	we obtain
	\begin{align*}
	 0\leq-\frac{\upgamma}{4T^2} \ \text{ or } \ \upgamma<0,
	\end{align*}
	which is a contradiction to $\upgamma>0$ and hence $\mathpzc{u}\leq\mathpzc{v}$ on $(0,T]\times\V$.
		\end{proof}
		
	\section{Existence and uniqueness of viscosity solutions}\label{extunqvisc1}\setcounter{equation}{0}
	This section revisits the HJBE \eqref{HJBE} where the  Hamiltonian $F$ is given by \eqref{hamfun}. We demonstrate that
	the value function satisfies the HJBE \eqref{HJBE} in the viscosity sense.

\begin{theorem}\label{extunqvisc}
	Assume Hypothesis \ref{valueH} holds. Furthermore, consider the function $\f:[0,T]\times\U\to\V$ such that for each control parameter $\a\in\U$, the mapping $t\mapsto\f(t,\a)$ is uniformly continuous on $[0,T]$. Then, for $r$ in Table \ref{Table2}, the value function $\mathpzc{V}$ satisfies HJBE \eqref{detHJB1}, uniquely, in the viscosity sense within the class of viscosity solutions $u$ satisfying the polynomial growth condition:
	\begin{align*}
		|\mathpzc{u}(t,\z)|\leq C(1+\|\z\|_{\V}^k), \  \ (t,\z)\in(0,T)\times\V,
	\end{align*} 
	for some $C>0$ and $k\geq0$.
\end{theorem}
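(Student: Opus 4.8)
\emph{Overview.} The theorem contains two assertions: that the value function $\mathpzc{V}$ is a viscosity solution of \eqref{detHJB1}, and that it is the unique one in the stated polynomial-growth class. I would treat uniqueness first, since it reduces to the comparison principle. The only thing to check is that the Bellman Hamiltonian $\mathcal{F}$ of \eqref{hamfun} satisfies Hypothesis \ref{hypF14}: assumptions \eqref{F1} and \eqref{F4} follow directly from \eqref{vh2} and \eqref{vh3} (for this Hamiltonian the $\z$-dependence enters only through $\mathpzc{L}$, so the $\omega(\|\z-\x\|_{\V}\|\p\|_{\H})$ term in \eqref{F1} is superfluous), \eqref{F2} follows from $\sup_{t,\a}\|\f(t,\a)\|_{\H}<\infty$ — valid because $\f$ is bounded with values in $\V\hookrightarrow\H$ — by taking a linear modulus, and \eqref{F3} follows from the uniform continuity in $t$ of $\f$. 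Since $\mathpzc{V}$ obeys the growth bound \eqref{bdd} by \eqref{bddV1} and $\mathpzc{g}$ by \eqref{vh1}, Theorem \ref{comparison} applies; running it once with a competing viscosity solution $\mathpzc{u}$ as subsolution and $\mathpzc{V}$ as supersolution, and once with the roles reversed, forces $\mathpzc{u}\equiv\mathpzc{V}$.

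\emph{Existence: terminal condition and the $\V_2$-regularity of extrema.} The terminal condition $\mathpzc{V}(T,\z)=\mathpzc{g}(\z)$ is immediate from \eqref{valueF}. Fix a test function $\uppsi(t,\z)=\upvarphi(t,\z)+\mathfrak{h}(t)\|\z\|_{\V}^2$ and a point $(t_0,\z_0)\in(0,T)\times\V$ at which $\mathpzc{V}-\uppsi$ has a global maximum (for the subsolution inequality) or $\mathpzc{V}+\uppsi$ a global minimum (for the supersolution inequality). The first step — and a point where the structure of the test function is essential — is to show $\z_0\in\V_2$. Running a strong solution $\Z(\cdot)$ from $(t_0,\z_0)$ with a fixed control and combining the extremality of $(t_0,\z_0)$, the dynamic programming principle \eqref{vdpp} (together with the local regularity \eqref{bddV} of $\mathpzc{V}$) and the chain rule applied to $s\mapsto\uppsi(s,\Z(s))$, one finds that the contribution $2\mathfrak{h}(s)\bigl(\mu\mathcal{A}\Z(s),(\mathcal{A}+\I)\Z(s)\bigr)$ arising from the $(\mathcal{A}+\I)\z$ part of $\mathcal{D} h$ (cf. \eqref{apyv1}) carries the favourable sign needed to bound $\tfrac1\lambda\int_{t_0}^{t_0+\lambda}\|\mathcal{A}\Z(s)\|_{\H}^2\d s$ from above, uniformly in $\lambda$; the competing $\mathfrak{B}$- and $\mathfrak{C}$-cross terms are absorbed by \eqref{syymB3}, \eqref{torusequ} and the energy estimates \eqref{eqn-conv-1}--\eqref{eqn-conv-2}. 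Selecting a sequence $s_n\downarrow t_0$ along which $\|\mathcal{A}\Z(s_n)\|_{\H}$ stays bounded, using $\Z(s_n)\to\z_0$ in $\H$ (Proposition \ref{cts-dep-soln}) and the closedness of $\mathcal{A}$, one concludes $\z_0\in\D(\mathcal{A})=\V_2$. With this in hand, the \emph{subsolution inequality} follows by inserting into \eqref{vdpp} a constant control nearly optimal for $\mathcal{F}(t_0,\z_0,\mathcal{D}\uppsi(t_0,\z_0))$, applying the chain rule and extremality, dividing by $\lambda$, and letting $\lambda\downarrow0$, using $\Z(s)\to\z_0$ in $\V$ and the uniform continuity of $\upvarphi_t,\mathcal{D}\upvarphi$ to pass all the limits.

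\emph{The supersolution inequality.} For each small $\lambda>0$ one selects an $\varepsilon\lambda$-optimal control $\a_\lambda(\cdot)$, so that \eqref{vdpp} and minimality of $(t_0,\z_0)$ give $\uppsi(t_0+\lambda,\Z_\lambda(t_0+\lambda))-\uppsi(t_0,\z_0)\ge\int_{t_0}^{t_0+\lambda}\mathpzc{L}(\Z_\lambda(s),\a_\lambda(s))\d s-\varepsilon\lambda$; expanding the left-hand side via the chain rule, invoking $\mathpzc{L}(\Z_\lambda,\a_\lambda)-(\f(s,\a_\lambda),\mathcal{D}\uppsi)\ge\mathcal{F}\bigl(s,\Z_\lambda(s),-\mathcal{D}\uppsi(s,\Z_\lambda(s))\bigr)$, dividing by $\lambda$, and letting $\lambda\downarrow0$ then $\varepsilon\downarrow0$ produces the required inequality — \emph{provided} every term passes to the limit. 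The genuinely new ingredients, absent for the Navier--Stokes equations (where $(\mathfrak{B}(\Z),\mathcal{A}\Z)=0$ when $d=2$, or where the $\H$-norm test function of \cite{FGSSA1} annihilates such terms), are the two averages \eqref{extunqdff1} and \eqref{extunqdff2} produced by the $(\mathcal{A}+\I)\Z_\lambda$ component of $\mathcal{D}\uppsi$, together with the weak convergence $(\mathcal{A}+\I)\Z_\lambda(\cdot)\rightharpoonup(\mathcal{A}+\I)\z_0$ (see \eqref{wknm2}), which is available only because $\mathcal{A}+\I$ — unlike $\mathcal{A}$, for which $0$ is an eigenvalue on $\mathbb{T}^d$ when no zero-mean condition is imposed — is boundedly invertible. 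The $\mathfrak{B}$-average \eqref{extunqdff1} is passed to the limit in both $d\in\{2,3\}$ using the energy estimates \eqref{eqn-conv-1}--\eqref{eqn-conv-2} and the continuous-dependence estimates \eqref{ctsdep0.1}--\eqref{ctsdep0.2}.

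\emph{The main obstacle.} The $\mathfrak{C}$-average \eqref{extunqdff2} is the crux. Applying H\"older's inequality in time to $\int_{t_0}^{t_0+\lambda}\bigl(\mathfrak{C}(\Z_\lambda(s)),(\mathcal{A}+\I)\Z_\lambda(s)\bigr)\d s$ and using the available space--time integrability of $\Z_\lambda$ forces the exponent $\tfrac{4(r+1)}{5-r}$, finite only for $r<5$ in dimension three; in dimension two the embedding $\V\hookrightarrow\widetilde{\L}^{r+1}$, valid for every $r\ge1$, removes any constraint, so all $r\in(3,\infty)$ are admissible. The borderline case $d=3$, $r=5$ must be handled on its own: with $\z_0\in\V_2$ already secured in the first step, one invokes the higher-order ($\D(\mathcal{A})$) energy estimate for the strong solution and Gr\"onwall's inequality to push the limit through. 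I expect this $\mathfrak{C}$-term — and the $r=5$ case in particular — to be the principal difficulty of the proof; the remaining passages to the limit (for $\upvarphi_t$, $\mathcal{D}\upvarphi$, the $\f$-terms rewritten via the $\V$-inner product since $\f\in\V$, and the identification of the limiting Hamiltonian through the continuity properties \eqref{F1}--\eqref{F2}) are routine adaptations of the two-dimensional Navier--Stokes argument of \cite{FGSSA1}.
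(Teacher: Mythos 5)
Your proposal is correct and follows essentially the same route as the paper: uniqueness via verification that the Bellman Hamiltonian \eqref{hamfun} satisfies Hypothesis \ref{hypF14} so that Theorem \ref{comparison} applies, and existence via the dynamic programming principle and the chain rule on the test function, first extracting the uniform bound on $\frac1\lambda\int_{t_0}^{t_0+\lambda}\|(\mathcal{A}+\I)\Z_\lambda(s)\|_{\H}^2\,\d s$ to conclude $\z_0\in\V_2$, then passing to the limit term by term, with the $\mathfrak{B}$- and $\mathfrak{C}$-averages against $(\mathcal{A}+\I)\Z_\lambda$ as the genuinely new difficulties, the exponent $\frac{4(r+1)}{5-r}$ forcing $r<5$ in $d=3$, and the borderline $r=5$ case handled separately through the $\D(\mathcal{A})$ energy estimate and Gr\"onwall's inequality. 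Your identification of $\z_0\in\V_2$ via the closedness of $\mathcal{A}$ along a bounded sequence is the same argument the paper phrases through Banach--Alaoglu and uniqueness of weak limits.
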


\begin{proof}
	Note that under the framework of Hypothesis \ref{valueH}, the Hamiltonian $F$ specified in \eqref{hamfun} satisfies all criteria as outlined in Hypothesis \ref{hypF14}. Furthermore, Proposition \ref{valueP} guarantees that the value function $\mathpzc{V}$ satisfies the conditions \eqref{vdpp}-\eqref{bddV1}, and given \eqref{bddV1}, we infer that  $\mathpzc{V}$ is weakly sequentially continuous on $(0, T]\times\V$. To complete the proof, it remains to verify that $\mathpzc{V}$ constitutes a \emph{viscosity solution} to the HJBE \eqref{HJBE}. The uniqueness of the viscosity solution follows immediately from Theorem \ref{comparison}. The strategy of the proof is as follows:
	\begin{itemize}
		\item Demonstrate that the points of extrema (minima for supersolution and maxima for subsolution) in the definition of viscosity solution lie in the space $\V_2$. 
		\item Employ the dynamic programming principle \eqref{vdpp} along with the various energy estimates for solutions of the state equation \eqref{stap} to derive the supersolution and subsolution inequalities as mentioned in the Definition \ref{viscsoLndef} through limit arguments.
	\end{itemize}
	For brevity, we present only the proof of viscosity supersolution. The subsolution case follows analogously. Consider the test function $\psi(t,\z)=\upvarphi(t,\z)+\delta(t)\|\z\|_{\V}^2$ and suppose that $\mathpzc{V}+\psi$ attains a global minimum at $(t_0,\z_0)\in(0,T)\times\V$.

	\vskip 1mm
	\noindent
	\textbf{To prove $\z_0\in\V_2$.}  In view of \eqref{vdpp}, for every $\lambda>0,$ there exists $\a_\lambda(\cdot)\in\mathscr{U}$ such that
	\begin{align}\label{vdp1}
		\mathpzc{V}(t_0,\z_0)+\lambda^2>\int_{t_0}^{t_0+\lambda}
		\mathpzc{L}(\Z_\lambda(s),\a_\lambda(s))\d s+ \mathpzc{V}(t_0+\lambda,\Z_\lambda(t_0+\lambda)),
	\end{align}
	where $\Z_\lambda(\cdot)=\Z(\cdot;t_0,\z_0,\a_\lambda(\cdot))$.
	 Since  $(\mathpzc{V}+\psi)(t_0,\y_0)\leq(\mathpzc{V}+\psi)(t,\y)$ for every $(t,\y)\in(0,T)\times\V$, then from \eqref{vdp1}, we have
	\begin{align}\label{vdp3}
		\lambda^2-\int_{t_0}^{t_0+\lambda}\mathpzc{L}(\Z_\lambda(s),\a_\lambda(s))\d s&\geq \mathpzc{V}(t_0+\lambda,\Z_\lambda(t_0+\lambda))-\mathpzc{V}(t_0,\y_0)\nonumber\\&\geq
		-\upvarphi(t_0+\lambda,\Z_\lambda(t_0+\lambda))+\upvarphi(t_0,\z_0)
		\nonumber\\&\quad- \delta(t_0+\lambda)\|\Z_\lambda(t_0+\lambda)\|_{\V}^2+
		\delta(t_0)\|\z_0\|_{\V}^2.
	\end{align}
	Let us set $\mathpzc{a}=\inf\limits_{t\in[t_0,t_0+\lambda_0]}\delta(t)$ for some $\lambda_0>0$. Then by making the use of chain rule and dividing both sides by $\lambda$, for $\lambda<\lambda_0$, we obtain
	\begin{align}\label{vdp3.1}
		&\lambda-\frac{1}{\lambda}\E\int_{t_0}^{t_0+\lambda}\mathpzc{L}(\Z_\lambda(s),\a_\lambda(s))\d s
		\nonumber\\&\geq
		-\frac{1}{\lambda}\int_{t_0}^{t_0+\lambda}\bigg(\upvarphi_t(s,\Z_\lambda(s))-\big(\mu\mathcal{A}\Z_\lambda(s)+\mathfrak{B}(\Z_\lambda(s))+\beta\mathfrak{C}(\Z_\lambda(s)),\mathcal{D}\upvarphi(s,\Z_\lambda(s))\big)
		\nonumber\\&\qquad+
		\big(\f(s,\a_\lambda(s)),\mathcal{D}\upvarphi(s,\Z_\lambda(s))\big)\bigg)\d s
		-\frac{1}{\lambda}\int_{t_0}^{t_0+\lambda}\delta'(s)\|\Z_\lambda(s)\|_{\V}^2)\d s
		\nonumber\\&\qquad+ \frac{2}{\lambda}\int_{t_0}^{t_0+\lambda}\delta(s)\bigg(\mu\big(\mathcal{A}\Z_\lambda(s),(\mathcal{A}+\I)\Z_\lambda(s)\big)+
		(\mathfrak{B}(\Z_\lambda(s)),(\mathcal{A}+\I)\Z_\lambda(s))
		\nonumber\\&\qquad+
		\beta(\mathfrak{C}(\Z_\lambda(s)),(\mathcal{A}+\I)\Z_\lambda(s))-\big(\f(s,\a_\lambda(s)),
		(\mathcal{A}+\I)\Z_\lambda(s)\big)\big)\bigg)\d s.
	\end{align}
	By the definition of $\mathpzc{a}$ and from the equality \eqref{torusequ}, it then follows that
	\begin{align}\label{vdp4}
		&\frac{2\mathpzc{a}}{\lambda}\int_{t_0}^{t_0+\lambda} \bigg(\beta(\mathfrak{C}(\Z_{\lambda}),(\mathcal{A}+\I)\Z_{\lambda})+\mu\|(\mathcal{A}+\I)\Z_\lambda(s)\|_{\H}^2\bigg)\d s
		\nonumber\\&\leq\lambda+
		\frac{1}{\lambda}\int_{t_0}^{t_0+\lambda}\bigg(-\mathpzc{L}(\Z_\lambda(s),\a_\lambda(s))+\upvarphi_t(s,\Z_\lambda(s))+(\f(s,\a_\lambda(s)),\mathcal{D}\upvarphi(s,\Z_\lambda(s)))\nonumber\\&\qquad-\big(\mu\mathcal{A}\Z_\lambda(s)+\mathfrak{B}(\Z_\lambda(s))+\beta\mathfrak{C}(\Z_\lambda(s)),\mathcal{D}\upvarphi(s,\Z_\lambda(s))\big)\bigg)\d s \nonumber\\&\qquad+
		\frac{1}{\lambda}\int_{t_0}^{t_0+\lambda}(\delta'(s)+2\mu\delta(s))
		\|\Z_\lambda(s)\|_{\V}^2\d s
		-
		\frac{2}{\lambda}\int_{t_0}^{t_0+\lambda}
		\delta(s)\big((\mathfrak{B}(\Z_\lambda(s)),(\mathcal{A}+\I)\Z_\lambda(s))\d s
		\nonumber\\&\qquad+\frac{2}{\lambda}
	    \int_{t_0}^{t_0+\lambda}\delta(s)
		(\f(s,\a_\lambda(s)),(\mathcal{A}+\I)\Z_\lambda(s))\big)\d s.
	\end{align}
	We now estimate all the terms in the right hand side of \eqref{vdp4} separately. By the assumptions on $\upvarphi$, we deduce 
	\begin{align}\label{unsol1}
		\|\mathcal{D}\upvarphi(\cdot,\Z_\lambda)\|_{\H}\leq C(1+\|\Z_\lambda\|_{\H}), \ \ 
		\|\upvarphi_t(\cdot,\Z_\lambda)\|_{\H}\leq C(1+\|\Z_\lambda\|_{\H}).
	\end{align} 
	Using \eqref{unsol1}, we estimate following:
	\begin{align}
		|\mathpzc{L}(\Z_\lambda,\a_\lambda)|&\leq C\big(1+\|\Z_\lambda\|_{\V}^k\big),\label{vdp6}\\
		|(\f(\cdot,\a_\lambda),\mathcal{D}\upvarphi(\cdot,\Z_\lambda))|&\leq 
		C\big(1+\|\Z_\lambda\|_{\H}\big),\label{vdp6.1}\\
		|(\f(\cdot,\a_\lambda),(\mathcal{A}+\I)\Z_\lambda)|&=|((\mathcal{A}+\I)^{\frac12}\f(\cdot,\a_\lambda),(\mathcal{A}+\I)^{\frac12}\Z_\lambda)|\leq C\|\Z_\lambda\|_{\V},\label{vdp6.2}\\
		|(\f(\cdot,\a_\lambda),\mathcal{D}\upvarphi(\cdot,\Z_\lambda))|&\leq C\big(1+\|\Z_\lambda\|_{\H}\big).\label{vdp555}
	\end{align}
	We now consider following two cases for further calculations. 
	\vskip 2mm
	\noindent
	\textbf{Case-I of in Table \ref{Table1}.}
	By making the use of Cauchy Schwarz and Young's inequalities, \eqref{unsol1} and using the similar calculations as in \eqref{syymB3}, we estimate following:
	\begin{align}
	\mu|(\mathcal{A}\Z_\lambda,\mathcal{D}\upvarphi(\cdot,\Z_\lambda))|&\leq\mu|((\mathcal{A}+\I)\Z_\lambda,\mathcal{D}\upvarphi(\cdot,\Z_\lambda))|\nonumber\\&\leq \frac{\mu\mathpzc{a}}{2}\|(\mathcal{A}+\I)\Z_\lambda\|_{\H}^2+C(1+\|\Z_\lambda\|_{\H}^2),
	\label{vdp5}\\
		|(\mathfrak{B}(\Z_\lambda),(\mathcal{A}+\I)\Z_\lambda)|
		&\leq\frac{\mu}{2}
		\|(\mathcal{A}+\I)\Z_\lambda\|_{\H}^2+\frac{\beta}{4}\||\Z_\lambda|^{\frac{r-1}{2}}\nabla\Z_\lambda\|_{\H}^2+\varrho^*\|\nabla\Z_\lambda\|_{\H}^2,\label{vdp5.1}\\
		|(\mathfrak{B}(\Z_\lambda),\mathcal{D}\upvarphi(\cdot,\Z_\lambda))|&\leq C(1+\|\Z_\lambda\|_{\H}^2)
		+\frac{\beta\mathpzc{a}}{4}\||\Z_\lambda|^{\frac{r-1}{2}}\nabla\Z_\lambda\|_{\H}^2+\varrho^{**}\|\nabla\Z_\lambda\|_{\H}^2,\label{vdp5.2}
	\end{align}
	where 
	$\varrho^{**}:=\frac{r-3}{2\mathpzc{a}(r-1)}\left[\frac{4}{\beta\mathpzc{a} (r-1)}\right]^{\frac{2}{r-3}}$ and  $\varrho^*:=\frac{r-3}{2\mu(r-1)}\left[\frac{4}{\beta
		\mu(r-1)}\right]^{\frac{2}{r-3}}$. 
	By using a calculation similar to \eqref{ctsdep7}, \eqref{unsol1} and Remark \ref{rg3L3r}, we find
	\begin{align}\label{vdp55}
		|(\mathfrak{C}(\Z_\lambda),\mathcal{D}\upvarphi(\cdot,\Z_\lambda))|&\leq	\|\Z_\lambda\|_{\wi\L^{r+1}}^{\frac{r+3}{4}}\|\Z\|_{\wi\L^{3(r+1)}}^{\frac{3(r-1)}{4}}\|\mathcal{D}\upvarphi(\cdot,\Z_\lambda)\|_{\H}
		\nonumber\\&\leq
		\frac{\mathpzc{a}}{4}\||\Z_\lambda|^{\frac{r-1}{2}}
		\nabla\Z_\lambda\|_{\H}^2+
		C(1+\|\Z_\lambda\|_{\H})^{r+1}+\mathpzc{a}\|\Z_\lambda\|_{\wi\L^{r+1}}^{r+1}.
	\end{align}
	Also, from the equality \eqref{toreq}, we write
	\begin{align}\label{vdp5.3}
		(\mathfrak{C}(\Z_{\lambda}),(\mathcal{A}+\I)\Z_{\lambda})\geq
		\||\Z_\lambda(s)|^{\frac{r-1}{2}}\nabla\Z_\lambda(s)\|_{\H}^2+
		\|\Z_\lambda(s)\|_{\wi\L^{r+1}}^{r+1}.
	\end{align}
	\vskip 2mm
	\noindent
	\textbf{Case-II of in Table \ref{Table1}.}
	From calculations similar to \eqref{syymB3}, we obtain for $0<\theta<1$
	\begin{align}
		\mu|(\mathcal{A}\Z_\lambda,\mathcal{D}\upvarphi(\cdot,\Z_\lambda))|&\leq \frac{3\mu\mathpzc{a}}{2}\|(\mathcal{A}+\I)\Z_\lambda\|_{\H}^2+C(1+\|\Z_\lambda\|_{\H}^2),\label{vdp5.555}\\
		|(\mathfrak{B}(\Z_\lambda),(\mathcal{A}+\I)\Z_\lambda)|&\leq\||\Z_\lambda|\nabla\Z_\lambda\|_{\H}
		\|(\mathcal{A}+\I)\Z_\lambda\|_{\H}
		\nonumber\\&\leq\frac{\theta\mu}{2}\|(\mathcal{A}+\I)\Z_\lambda\|_{\H}^2+
		\frac{1}{2\theta\mu}\||\Z_\lambda|\nabla\Z_\lambda\|_{\H}^2,
		\label{vdp5.4}\\
		|(\mathfrak{B}(\Z_\lambda),\mathcal{D}\upvarphi(\cdot,\Z_\lambda))|&\leq C(1+\|\Z_\lambda\|_{\H}^2)
		+\frac{\beta\mathpzc{a}}{2}\||\Z_\lambda|\nabla\Z_\lambda\|_{\H}^2.
		\label{vdp5.8}
	\end{align}
	Similar to \eqref{vdp55}, we obtain the following estimate:
	\begin{align*}
		|(\mathfrak{C}(\Z_\lambda),\mathcal{D}\upvarphi(\cdot,\Z_\lambda))|\leq
		\frac{\mathpzc{a}}{2}\||\Z_\lambda|\nabla\Z_\lambda\|_{\H}^2+
		C(1+\|\Z_\lambda\|_{\H})^{r+1}+\mathpzc{a}\|\Z_\lambda\|_{\wi\L^{r+1}}^{r+1}.
	\end{align*}
	For $r=3$, we write \eqref{toreq} as
	\begin{align}\label{vdp5.5}
		(\mathfrak{C}(\Z_\lambda),(\A+\I)\Z_\lambda)\geq\||\Z_\lambda|\nabla \Z_{\lambda}\|_{\H}^{2} +\|\Z_\lambda\|_{\wi\L^{4}}^{4}.
	\end{align}
	
	Combining \eqref{vdp6}-\eqref{vdp5.5} into \eqref{vdp4},
	and using \eqref{eqn-conv-1} and \eqref{eqn-conv-2}, we obtain
	\begin{align}\label{vdp7}
		\frac{\mathpzc{a}}{\lambda}\int_{t_0}^{t_0+\lambda}& \bigg(\mathfrak{a}_3
		\||\Z_\lambda(s)|^{\frac{r-1}{2}}\nabla\Z_\lambda(s)\|_{\H}^2+
		2\beta\|\Z_\lambda(s)\|_{\wi\L^{r+1}}^{r+1}+
		\frac{\mathfrak{a}_2\mu}{2}\|(\A+\I)\Z_\lambda(s)\|_{\H}^2\bigg)\d s\leq C,
	\end{align}
   where the constants $\mathfrak{a}_2$ and $\mathfrak{a}_3$ are defined in \eqref{e3cbf1}-\eqref{e3cbf}. Consequently, there exists a sequence $\{\lambda_n\}_{n\geq1}$, with $\lambda_n\to0$ and a sequence  $\{t_n\}_{n\geq1}$ in $(t_0,t_0+\lambda_n)$ such that
	\begin{align}\label{vdp7.1}
		\|\Z_{\lambda_n}(t_n)\|_{\V_2}^2\leq C, 
	\end{align}  
	and thus by an application of the Banach-Alaoglu theorem, we have 
following weak convergence along a subsequence (still denoted by $\lambda_n$ and $t_n$)
	\begin{align}\label{weakL}
		\Z_{\lambda_n}(t_n)\rightharpoonup\bar{\z} \ \text{ in } \ \V_2, \ \text{ as } \ n\to\infty.
	\end{align}
	for some $\bar{\z}\in\V_2$. However, in view of \eqref{ctsdep0.1}, we have following strong convergence:
	\begin{align}\label{strgL}
		\Z_{\lambda_n}(t_n)\to\z_0 \ \text{ in } \ \H.
	\end{align}
	Therefore, by using the fact that weak limits are uniqu, \eqref{weakL} and \eqref{strgL}, we obtain
	\begin{align*}
		\z_0=\bar{\z}\in\V_2.
	\end{align*}

\vskip 0.2mm
\noindent
\textbf{To prove the supersolution inequality:}  To establish the supersolution inequality, at least along a subsequence, we analyse the limit as $\lambda\to0$ in \eqref{vdp4}. For the convergence of the terms involving the norms of the space $\H$ and $\V$, one may apply the standard compactness argument due to the uniform energy estimates \eqref{eqn-conv-1} and \eqref{eqn-conv-2}. 

We will begin by demonstrating the convergence of the cost term on the right-hand side of \eqref{vdp4}. From uniform energy estimates \eqref{eqn-conv-1} and \eqref{eqn-conv-2}, we have
\begin{align*}
	\sup\limits_{s\in[t_0,T]}\|\Z_\lambda(s)\|_{\V}\leq C(\|\z_0\|_{\V})=R.
\end{align*}
Then, by using Jensen's and Holder's inequalities, \eqref{ctsdep0.2}, we calculate
\begin{align}\label{ss1} 
	\bigg|\frac{1}{\lambda}\int_{t_0}^{t_0+\lambda}\big[\mathpzc{L}(\Z_\lambda(s),\a_\lambda(s))-\mathpzc{L}(\z_0,\a_\lambda(s))\big]\d s \bigg|&\leq
	\frac{1}{\lambda}\int_{t_0}^{t_0+\lambda}\sigma_r\big(\|\Z_\lambda(s)-\z_0\|_{\V}\big)\d s
	\nonumber\\&\leq
	\sigma_r\bigg(\frac{1}{\lambda}\int_{t_0}^{t_0+\lambda}\|\Z_\lambda(s)-\z_0\|_{\V}\d s\bigg)
	\nonumber\\&\leq
	\sigma_r\bigg(\frac{1}{\lambda}\int_{t_0}^{t_0+\lambda}\sqrt{\omega_{\z_0}
		(\lambda)}\d s\bigg),
\end{align}
where right hand side is going to zero by letting $\lambda\to0$.
Similalry, by using the continuity of $\upvarphi_t$ (see Definition \ref{testD}), we obtain
\begin{align*}
	\bigg|\frac{1}{\lambda}\int_{t_0}^{t_0+\lambda}\big(\upvarphi_t(s,\Z_\lambda(s))-
	\upvarphi_t(t_0,\z_0)\big)\d s\bigg|\leq\frac{1}{\lambda}\int_{t_0}^{t_0+\lambda}
	\omega_1(\lambda+\|\Z_\lambda(s)-\z_0\|_{\H})\d s,
\end{align*}
where $\omega_1$ is some local modulus of continuity. Let us now discuss the limit as $\lambda\to0$ in the following terms:
\begin{align}
	&\frac{1}{\lambda}\int_{t_0}^{t_0+\lambda}(\mathcal{A}\Z_\lambda(s),\mathcal{D}\upvarphi(s,\Z_\lambda(s)))\d s, \label{pas1}\\	&\frac{1}{\lambda}\int_{t_0}^{t_0+\lambda}(\mathfrak{B}(\Z_\lambda(s)),\mathcal{D}\upvarphi(s,\Z_\lambda(s)))\d s, \label{pas2}\\
	&\frac{1}{\lambda}\int_{t_0}^{t_0+\lambda}(\beta\mathfrak{C}(\Z_\lambda(s)),\mathcal{D}\upvarphi(s,\Z_\lambda(s)))\d s,  \label{pas3}\\ 
	&\frac{1}{\lambda}\int_{t_0}^{t_0+\lambda}\big(\f(s,\a_\lambda(s)),
	\mathcal{D}\upvarphi(s,\Z_\lambda(s)\big)\d s, \label{pas4}\\
	&\frac{1}{\lambda}\int_{t_0}^{t_0+\lambda}\delta(s)\big(\f(s,\a_\lambda(s)), (\mathcal{A}+\I)\Z_\lambda(s)\big)\big)\d s, \label{pas5}\\
	&\frac{1}{\lambda}\int_{t_0}^{t_0+\lambda}\delta(s)\|(\mathcal{A}+\I)\Z_\lambda(s)\|_{\H}^2\d s, \label{pas6}\\
	&\frac{1}{\lambda}\int_{t_0}^{t_0+\lambda}\delta(s)\big(\Z_\lambda(s),(\mathcal{A}+\I)\Z_\lambda(s)\big)\d s,
	\label{pas8}\\
	&\frac{1}{\lambda}\int_{t_0}^{t_0+\lambda}\delta(s)(\mathfrak{B}(\Z_\lambda(s)),(\mathcal{A}+\I)\Z_\lambda(s))\d s, \label{pas9}\\
	&\frac{1}{\lambda}\int_{t_0}^{t_0+\lambda}\delta(s)(\mathfrak{C}(\Z_\lambda(s)),(\mathcal{A}+\I)\Z_\lambda(s))\d s. \label{pas10}
\end{align}
For the rest of the proof, we will use various moduli, denoted by $\upsigma(\cdot)$.
\vskip 0.2mm
\noindent
\textbf{Passing limit into \eqref{pas4}.} Since $\f:[0,T]\times\U\to\V$ is uniformly continuous, therefore, we have 
\begin{align}\label{modf}
	\|\f(s,\a_\lambda(s))-\f(t,\a_\lambda(s))\|_{\V}\leq\omega_{\f}(|s-t|), \ \text{ for all } \ s,t\in[t_0,t_0+\lambda],
\end{align} 
for some modulus of continuity $\omega_{\f}$. Similarly, due to the uniform continuity of $\mathcal{D}\upvarphi$ on $[t_0,t_0+\lambda]\times\H$, for every $\lambda>0$, (see Definition \ref{testD}), we write
\begin{align}\label{modphi}
	\|\mathcal{D}\upvarphi(s,\Z_\lambda(s)-\mathcal{D}\upvarphi(t_0,\z_0)\|_{\H}\leq
	\omega_2(\lambda+\|\Z_\lambda(s)-\z_0\|_{\H}), \ \text{ for all } \ s,t\in[t_0,t_0+\lambda],
\end{align}
where $\omega_2$ is some local modulus of continuity. From \eqref{modf}-\eqref{modphi}, we estimate \eqref{pas4} as
\begin{align*}
	&\bigg|\frac{1}{\lambda_n}\int_{t_0}^{t_0+\lambda_n}\big[\big(\f(s,\a_{\lambda_n}(s)),
	\mathcal{D}\upvarphi(s,\Z_{\lambda_n}(s)\big)-\big(\f(t_0,\a_{\lambda_n}(s)),
	\mathcal{D}\upvarphi(t_0,\z_0)\big)\big]\d s \bigg|
	\nonumber\\&\leq
	\frac{C}{\lambda_n}\int_{t_0}^{t_0+\lambda_n}\|\f(s,\a_{\lambda_n}(s))\|_{\V}
	\|\mathcal{D}\upvarphi(s,\Z_{\lambda_n}(s)-\mathcal{D}\upvarphi(t_0,\z_0)\|_{\H}\d s
	\nonumber\\&\quad+
	\frac{C}{\lambda_n}\int_{t_0}^{t_0+{\lambda_n}}\|\f(s,\a_{\lambda_n}(s))-\f(t_0,\a_{\lambda_n}(s)\|_{\V}\|\mathcal{D}\upvarphi(t_0,\z_0)\|_{\H}\d s
	\nonumber\\&\leq
	\frac{C}{\lambda_n}\int_{t_0}^{t_0+\lambda_n}
	\omega_2(\lambda_n+\|\Z_{\lambda_n}(s)-\z_0\|_{\H})\d s+
	\frac{C}{\lambda_n}\int_{t_0}^{t_0+{\lambda_n}}\omega_{\f}(\lambda_n)\d s
	\nonumber\\&\leq\upsigma(\lambda_n).
\end{align*}

\vskip 0.2mm
\noindent
\textbf{Passing limit into \eqref{pas6} and \eqref{pas8}.} 
By the application of H\"older's inequality and \eqref{vdp7}, we find
\begin{align}\label{vdp8}
	\left\|\frac{1}{\lambda}\int_{t_0}^{t_0+\lambda}\sqrt{\delta(s)}
	(\mathcal{A}+\I)\Z_\lambda(s)\d s \right\|_{\H}^2\leq
	\frac{1}{\lambda}\int_{t_0}^{t_0+\lambda}
	\delta(s)\|(\mathcal{A}+\I)\Z_\lambda(s)\|_{\H}^2\d s
	\leq C.
\end{align}
Therefore, there exists a sequence $\lambda_n\to0$ and $\wi\z\in\H$ such that 
\begin{align}\label{wknm}
	\wi\z_n:=\frac{1}{\lambda_n}\int_{t_0}^{t_0+\lambda_n}\sqrt{\delta(s)}(\mathcal{A}+\I)\Z_{\lambda_n}(s)\d s\rightharpoonup\wi\z \  \text{ in } \ \H,
\end{align}
as $n\to\infty$. 
Arguing similarly as we did in \eqref{ss1}, one can show that 
\begin{align}\label{wknmdiff}
	(\mathcal{A}+\I)^{-1}\wi\z_n=\frac{1}{\lambda_n}\int_{t_0}^{t_0+\lambda_n}\sqrt{\delta(s)}\Z_{\lambda_n}(s)\d s\to\sqrt{\delta(t_0)}\z_0 \ \text{ in } \H
	\ \text{ as } n\to\infty.
\end{align}
 Then the uniqueness of weak limits yields $$\wi\z=\sqrt{\delta(t_0)}(\mathcal{A}+\I) \z_0.$$
Moreover, in view of \eqref{eqn-conv-1} and \eqref{eqn-conv-2}, \eqref{ctsdep0.1}-\eqref{ctsdep0.2}, one can also verify the following limits:
\begin{align}
\frac{1}{\lambda_n}\int_{t_0}^{t_0+\lambda_n}\delta(s)\|\Z_{\lambda_n}(s)\|_{\H}^2\d s&\to\delta(t_0) \|\z_0\|_{\H}^2
	\label{wknm1.1}
\end{align}
and 
\begin{align}
	\frac{1}{\lambda_n}\int_{t_0}^{t_0+\lambda_n} \delta(s)\|\nabla\Z_{\lambda_n}(s)\|_{\H}^2\d s&\to\delta(t_0) \|\nabla\z_0\|_{\H}^2 ,\label{wknm1.2}
\end{align} 
in $\H$ as $n\to\infty$. From \eqref{vdp8}, \eqref{wknm}, Jensen's inequality and then weak lower semicontinuity property of norm yields the following estimate:
\begin{align}\label{wknm1}
	\liminf_{n\to\infty}\frac{1}{\lambda_n}\int_{t_0}^{t_0+\lambda_n}\delta(s) \|(\mathcal{A}+\I)\Z_{\lambda_n}(s)\|_{\H}^2
	\d s&\geq\liminf_{n\to\infty} \left\|\frac{1}{\lambda_n}\int_{t_0}^{t_0+{\lambda_n}}
	\sqrt{\delta(s)}(\mathcal{A}+\I)\Z_{\lambda_n}(s)\d s \right\|_{\H}^2\nonumber\\&\geq
	\delta(t_0)\|(\mathcal{A}+\I)\z_0\|_{\H}^2.
\end{align}
This will take care of the term \eqref{pas4}. A similar argument as we performed  above yields 
\begin{align}\label{wknm2}
	\frac{1}{\lambda_n}\int_{t_0}^{t_0+\lambda_n}(\mathcal{A}+\I)\Z_{\lambda_n}(s)\d s \rightharpoonup(\mathcal{A}+\I)\z_0 \  \text{ in } \ \H
	\  \text{ as } \ n\to\infty.
\end{align}

\vskip 0.2mm
\noindent
\textbf{Passing limit into the linear term \eqref{pas1}.} 
Let $\omega_{\upvarphi}$ denotes  a modulus of continuity for $\mathcal{D}\upvarphi$. Then, from the application of H\"older's inequality, \eqref{vdp7}, \eqref{ctsdep0.1} and \eqref{wknm2}, we estimate the following:
\begin{align}\label{wknm3}
	&\bigg|\frac{1}{\lambda_n}\int_{t_0}^{t_0+\lambda_n}\big(\mathcal{A}
	\Z_{\lambda_n}(s),\mathcal{D}\upvarphi(s,\Z_{\lambda_n}(s))\big)\d s-
	\big(\mathcal{A}\z_0,\mathcal{D}\upvarphi(t_0,\z_0)\big)\bigg|
	\nonumber\\&\leq
	\left(\frac{1}{\lambda_n}\int_{t_0}^{t_0+\lambda_n}\|\mathcal{A}
	\Z_{\lambda_n}(s)\|_{\H}^2\d s\right)^{\frac12} \left(\frac{1}{\lambda_n}\int_{t_0}^{t_0+\lambda_n}\big(\omega_\upvarphi\big(\lambda_n+\|\Z_{\lambda_n}(s)-\z_0\|_{\H}\big)\big)^2\d s\right)^{\frac12}
	\nonumber\\&\quad+
	\bigg|\bigg(\frac{1}{\lambda_n}\int_{t_0}^{t_0+\lambda_n}
	\mathcal{A}\big(\Z_{\lambda_n}(s)-\z_0\big),\mathcal{D}\upvarphi(t_0,\z_0)
	\bigg)\bigg|\nonumber\\&\leq\upsigma(\lambda_n).
\end{align}

\vskip 0.2mm
\noindent
\textbf{Passing limit into the term \eqref{pas5}.} Using \eqref{ctsdep0.2} and definition of modulus of continuity, one can conclude that 
{\small
	\begin{align*} 
		&\bigg|\frac{1}{\lambda_n}\int_{t_0}^{t_0+\lambda_n}\delta(s)\big(\f(s,\a_{\lambda_n}(s)), (\mathcal{A}+\I)\Z_{\lambda_n}(s)\big)d s-
		\frac{1}{\lambda_n}\int_{t_0}^{t_0+\lambda_n}\delta(t_0)\big(\f(s,\a_{\lambda_n}(s)), (\mathcal{A}+\I)\z_0\big)\d s\bigg|\nonumber\\&\leq
		\frac{1}{\lambda_n}\int_{t_0}^{t_0+\lambda_n}\delta(s)\bigg|\big((\mathcal{A}+\I)^{\frac12}\f(s,\a_{\lambda_n}(s)),(\mathcal{A}+\I)^{\frac12}\Z_{\lambda_n}(s)-(\mathcal{A}+\I)^{\frac12}\z_0\big)\bigg|\d s
		\nonumber\\&\quad+
		\frac{1}{\lambda_n}\int_{t_0}^{t_0+\lambda_n}\big|\delta(s)-\delta(t_0)\big|
		\big|\big((\mathcal{A}+\I)^{\frac12}\f(s,\a_{\lambda_n}(s)),(\mathcal{A}+\I)^{\frac12}\z_0\big)\big|\d s
		\nonumber\\&\leq
		C\left(\frac{1}{\lambda_n}\int_{t_0}^{t_0+\lambda_n}\|\Z_{\lambda_n}(s)-\z_0\|_{\V}^2\d s\right)^{\frac12}+
		\frac{C}{\lambda_n}\int_{t_0}^{t_0+\lambda_n}\big|\delta(s)-\delta(t_0)\big|
		\d s\nonumber\\&\leq\upsigma(\lambda_n).
\end{align*}}

\vskip 0.2mm
\noindent
\textbf{Passing limit into the Navier-Stokes nonlinearity \eqref{pas2} and \eqref{pas9}.} 
By using \eqref{syymB}, we write
\begin{align}\label{wknm5}
	&\bigg|\frac{1}{\lambda_n}\int_{t_0}^{t_0+\lambda_n}(\mathfrak{B}(\Z_{\lambda_n}(s)),\mathcal{D}\upvarphi(s,\Z_{\lambda_n}(s)))\d s-
	(\mathfrak{B}(\z_0),\mathcal{D}\upvarphi(t_0,\z_0))\bigg|
	\nonumber\\&\leq
	\underbrace{\bigg|\frac{1}{\lambda_n}\int_{t_0}^{t_0+\lambda_n}
		(\mathfrak{B}(\Z_{\lambda_n}(s)),\mathcal{D}\upvarphi(s,\Z_{\lambda_n}(s))-\mathcal{D}\upvarphi(t_0,\z_0))\d s \bigg|}_{:=J_1}
	\nonumber\\&\quad+
	\underbrace{\bigg|\frac{1}{\lambda_n}\int_{t_0}^{t_0+\lambda_n}
		(\mathfrak{B}(\Z_{\lambda_n}(s)-\z_0,\Z_{\lambda_n}(s)),\mathcal{D}\upvarphi(t_0,\z_0))\d s \bigg|}_{:=J_2}
	\nonumber\\&\quad+
	\underbrace{\bigg|\frac{1}{\lambda_n}\int_{t_0}^{t_0+\lambda_n}(\mathfrak{B}(\z_0,\Z_{\lambda_n}(s)-\z_0),\mathcal{D}\upvarphi(t_0,\z_0))\d s \bigg|}_{:=J_3}.
\end{align}	
By using Agmon's and H\"older's inequalities, and energy estimates \eqref{eqn-conv-1} and \eqref{eqn-conv-2}, and \eqref{vdp7}, we calculate
\begin{align}\label{J1B}
	J_1&\leq\frac{1}{\lambda_n}\int_{t_0}^{t_0+\lambda_n}
	\|\mathfrak{B}(\Z_{\lambda_n}(s))\|_{\H}\left(\omega_\upvarphi\big(\lambda_n+\|\Z_{\lambda_n}(s)-\z_0\|_{\H}\big)\right)\d s
	\nonumber\\&\leq
	\frac{1}{\lambda_n}\int_{t_0}^{t_0+\lambda_n}
	\|\Z_{\lambda_n}(s)\|_{\H}^{1-\frac{d}{4}}\|(\mathcal{A}+\I)\Z_{\lambda_n}(s)\|_{\H}^{\frac{d}{4}}\|\nabla\Z_{\lambda_n}(s)\|_{\H}
	\left(\omega_\upvarphi\big(\lambda_n+\|\Z_{\lambda_n}(s)-\z_0\|_{\H}\big)
	\right)\d s   
	\nonumber\\&\leq
	\frac{C}{\lambda_n}\int_{t_0}^{t_0+\lambda_n}
     \|(\mathcal{A}+\I)\Z_{\lambda_n}(s)\|_{\H}^{\frac{d}{4}}
	\left(\omega_\upvarphi\big(\lambda_n+\|\Z_{\lambda_n}(s)-\z_0\|_{\H}\big)
	\right)\d s     
	\nonumber\\&\leq
     \left(\frac{1}{\lambda_n}\int_{t_0}^{t_0+\lambda_n}\|(\mathcal{A}+\I)\Z_{\lambda_n}(s)\|_{\H}^{2}\d s
	\right)^{\frac{d}{8}} \left(\frac{1}{\lambda_n}\int_{t_0}^{t_0+\lambda_n}
	\big(\omega_\upvarphi\big(\lambda_n+\|\Z_{\lambda_n}(s)-\z_0\|_{\H}
	\big)\big)^{\frac{8}{8-d}}\d s\right)^{\frac{8-d}{8}}.
\end{align}
Similalry, we calculate
\begin{align}\label{J2B}
	J_2&\leq\frac{1}{\lambda_n}\int_{t_0}^{t_0+\lambda_n}
	\|\mathfrak{B}(\Z_{\lambda_n}(s)-\z_0,\Z_{\lambda_n}(s))\|_{\H}\left(\omega_\upvarphi\big(\lambda_n+\|\Z_{\lambda_n}(s)-\z_0\|_{\H}\big)\right)\d s
	\nonumber\\&\leq
	\frac{1}{\lambda_n}\int_{t_0}^{t_0+\lambda_n}
	\|\Z_{\lambda_n}(s)-\z_0\|_{\H}^{1-\frac{d}{4}}\|(\mathcal{A}+\I)(\Z_{\lambda_n}(s)-\z_0)\|_{\H}^{\frac{d}{4}}\|\nabla\Z_{\lambda_n}(s)\|_{\H}
	\nonumber\\&\qquad\times
	\left(\omega_\upvarphi\big(\lambda_n+\|\Z_{\lambda_n}(s)-\z_0\|_{\H}\big)
	\right)\d s \nonumber\\&\leq
	\frac{C}{\lambda_n}\int_{t_0}^{t_0+\lambda_n}
	\|\Z_{\lambda_n}(s)-\z_0\|_{\H}^{1-\frac{d}{4}}\left(\|(\mathcal{A}+\I)\Z_{\lambda_n}(s)\|_{\H}^{\frac{d}{4}}+\|(\mathcal{A}+\I)\z_0\|_{\H}^{\frac{d}{4}}\right)
	\nonumber\\&\qquad\times
	\left(\omega_\upvarphi\big(\lambda_n+\|\Z_{\lambda_n}(s)-\z_0\|_{\H}\big)
	\right)\d s
	\nonumber\\&\leq
	\left(\frac{1}{\lambda_n}\int_{t_0}^{t_0+\lambda_n}
	\|\Z_{\lambda_n}(s)-\z_0\|_{\H}^2 \d s\right)^{\frac{4-d}{8}} \bigg[\left(\frac{1}{\lambda_n}\int_{t_0}^{t_0+\lambda_n}\|(\mathcal{A}+\I)\Z_{\lambda_n}(s)\|_{\H}^{2}\d s\right)^{\frac{d}{8}}+1\bigg] \nonumber\\&\qquad\times
	\left(\frac{1}{\lambda_n}\int_{t_0}^{t_0+\lambda_n}
	\big(\omega_\upvarphi\big(\lambda_n+\|\Z_{\lambda_n}(s)-\z_0\|_{\H}
	\big)\big)^4\d s\right)^{\frac{1}{4}}.
\end{align}
Also, we estimate the final term in \eqref{wknm5} as
\begin{align}\label{J3B}
	J_3&\leq\frac{1}{\lambda_n}\int_{t_0}^{t_0+\lambda_n}
	\|\mathfrak{B}(\z_0,\Z_{\lambda_n}(s)-\z_0)\|_{\H}\left(\omega_\upvarphi\big(\lambda_n+\|\Z_{\lambda_n}(s)-\z_0\|_{\H}\big)\right)\d s
	\nonumber\\&\leq
	\frac{1}{\lambda_n}\int_{t_0}^{t_0+\lambda_n}
	\|\z_0\|_{\H}^{1-\frac{d}{4}}\|(\mathcal{A}+\I)\z_0\|_{\H}^{\frac{d}{4}}
	\|\nabla(\Z_{\lambda_n}(s)-\z_0)\|_{\H}
	\left(\omega_\upvarphi\big(\lambda_n+\|\Z_{\lambda_n}(s)-\z_0\|_{\H}\big)
	\right)\d s
	\nonumber\\&\leq C
	\left(\frac{1}{\lambda_n}\int_{t_0}^{t_0+\lambda_n}
	\|\nabla(\Z_{\lambda_n}(s)-\z_0)\|_{\H}^{2}\d s\right)^{\frac{1}{2}} 
	\left(\frac{1}{\lambda_n}\int_{t_0}^{t_0+\lambda_n}
	\big(\omega_\upvarphi\big(\lambda_n+\|\Z_{\lambda_n}(s)-\z_0\|_{\H}
	\big)\big)^2\d s\right)^{\frac{1}{2}}.
\end{align}
Therefore \eqref{wknm5} together with \eqref{J1B}-\eqref{J3B}, yields
\begin{align*}
\bigg|\frac{1}{\lambda_n}\int_{t_0}^{t_0+\lambda_n}(\mathfrak{B}(\Z_{\lambda_n}(s)),\mathcal{D}\upvarphi(s,\Z_{\lambda_n}(s)))\d s-
(\mathfrak{B}(\z_0),\mathcal{D}\upvarphi(t_0,\z_0))\bigg|
\leq\upsigma(\lambda_n).
\end{align*}
Similar calculations can be performed for \eqref{pas9} (see Appendix \ref{wknBA1} for a detailed explanation)  to obtain 
\begin{align}\label{appB}
	\bigg|\frac{1}{\lambda_n}\int_{t_0}^{t_0+\lambda_n}\delta(s)
	\big(\mathfrak{B}(\Z_{\lambda_n}(s)),\mathcal{A}\Z_{\lambda_n}(s)\big)\d s-
	\delta(t_0)\big(\mathfrak{B}(\z_0),\mathcal{A}\z_0\big)
	\bigg|\leq\upsigma(\lambda_n).
\end{align}

\vskip 0.2mm
\noindent
\textbf{Passing limit into Forchheimer nonlinearity \eqref{pas3}.} 
We now deal with the term \eqref{pas3}. For this, we write 
\begin{align}\label{wknm4}
	&\bigg|\frac{1}{\lambda_n}\int_{t_0}^{t_0+\lambda_n}(\mathfrak{C}(\Z_{\lambda_n}(s)),\mathcal{D}\upvarphi(s,\Z_{\lambda_n}(s)))\d s-
	(\mathfrak{C}(\z_0),\mathcal{D}\upvarphi(t_0,\z_0))\bigg|
	\nonumber\\&\leq
	\bigg|\frac{1}{\lambda_n}\int_{t_0}^{t_0+\lambda_n}(\mathfrak{C}
	(\Z_{\lambda_n}(s)),\mathcal{D}\upvarphi(s,\Z_{\lambda_n}(s))-\mathcal{D}\upvarphi(t_0,\z_0)) 
	\d s\bigg|\nonumber\\&\quad+
	\bigg|\frac{1}{\lambda_n}\int_{t_0}^{t_0+\lambda_n}(\mathfrak{C}
	(\Z_{\lambda_n}(s))-\mathfrak{C}(\z_0),\mathcal{D}\upvarphi(t_0,\z_0))\d s \bigg|.
\end{align}	
Using the calculation \eqref{vdp55}, H\"older's inequality 
and \eqref{eqn-conv-2}, we calculate
\begin{align}\label{wknm4.1}
	&\bigg|\frac{1}{\lambda_n}\int_{t_0}^{t_0+\lambda_n}(\mathfrak{C}
	(\Z_{\lambda_n}(s)),\mathcal{D}\upvarphi(s,\Z_{\lambda_n}(s))-\mathcal{D}\upvarphi(t_0,\z_0)) 
	\d s\bigg|\nonumber\\&\leq C
	\bigg|\frac{1}{\lambda_n}\int_{t_0}^{t_0+\lambda_n}
	\left(\||\Z_{\lambda_n}(s)|^{\frac{r-1}{2}}\nabla\Z_{\lambda_n}(s)\|_{\H}^2+\|\Z_{\lambda_n}\|_{\wi\L^{r+1}}^{r+1}\right)^{\frac{r}{r+1}}\omega_\upvarphi\big(\lambda_n+\|\Z_{\lambda_n}(s)-\z_0\|_{\H}\big)\d s\bigg|\nonumber\\&\leq
	C\bigg|\frac{1}{\lambda_n}\int_{t_0}^{t_0+\lambda_n} \left(\||\Z_{\lambda_n}(s)|^{\frac{r-1}{2}}\nabla\Z_{\lambda_n}(s)\|_{\H}^{\frac{2r}{r+1}}+
	\|\Z_{\lambda_n}(s)\|_{\wi\L^{r+1}}^r\right)\omega_\upvarphi\big(\lambda_n+\|\Z_{\lambda_n}(s)-\z_0\|_{\H}\big)\d s\bigg|
	\nonumber\\&\leq C
	\left(\frac{1}{\lambda_n}\int_{t_0}^{t_0+\lambda_n}
	\||\Z_{\lambda_n}(s)|^{\frac{r-1}{2}}\nabla\Z_{\lambda_n}(s)\|_{\H}^2\d s \right)^{\frac{r}{r+1}}\left(\frac{1}{\lambda_n}\int_{t_0}^{t_0+\lambda_n}
	\big(\omega_\upvarphi\big(\lambda_n+\|\Z_{\lambda_n}(s)-\z_0\|_{\H}\big)
	\big)^{r+1}\d s\right)^{\frac{1}{r+1}}\nonumber\\&\quad+
	C\left(\frac{1}{\lambda_n}\int_{t_0}^{t_0+\lambda_n}
	\|\Z_{\lambda_n}(s)\|_{\wi\L^{r+1}}^{r+1}\d s\right)^{\frac{r}{r+1}} \left(\frac{1}{\lambda_n}\int_{t_0}^{t_0+\lambda_n}
	\big(\omega_\upvarphi\big(\lambda_n+\|\Z_{\lambda_n}(s)-\z_0\|_{\H}\big)
	\big)^2\d s\right)^{\frac{1}{2}}.
\end{align}
In a similar way, the application of Taylor's formula, H\"older's inequality, interpolation inequality, $\V\hookrightarrow\wi\L^6$, and \eqref{ctsdep0.2}, we calculate
\begin{align}\label{wknm4.2}
	&\bigg|\frac{1}{\lambda_n}\int_{t_0}^{t_0+\lambda_n}(\mathfrak{C}
	(\Z_{\lambda_n}(s))-\mathfrak{C}(\z_0),\mathcal{D}\upvarphi(t_0,\z_0))\d s \bigg|
\nonumber\\&\leq
	\frac{1}{\lambda_n}\int_{t_0}^{t_0+\lambda_n}
	\||\Z_{\lambda_n}(s)|+|\z_0|\|_{\wi\L^{3(r+1)}}^{r-1}\|\Z_{\lambda_n}(s)-\z_0\|_{\wi\L^{\frac{6(r+1)}{r+5}}}\|\mathcal{D}\upvarphi(t_0,\z_0)\|_{\H}\d s
	\nonumber\\&\leq
    \left(\frac{1}{\lambda_n}\int_{t_0}^{t_0+\lambda_n}
	\||\Z_{\lambda_n}(s)|+|\z_0|\|_{\wi\L^{3(r+1)}}^{r+1}\d s \right)^ {\frac{r-1}{r+1}}\left(\frac{1}{\lambda_n}\int_{t_0}^{t_0+\lambda_n}
	\|\Z_{\lambda_n}(s)-\z_0\|_{\V}^2\d s \right)^{\frac12} 
	\nonumber\\&\leq C
	\left(\frac{1}{\lambda_n} \int_{t_0}^{t_0+\lambda_n}
	\omega_{\z_0}(\lambda_n)\d s\right),
\end{align}
where in the last line of above inequality, we used \eqref{vdp7}. On combining, \eqref{wknm4.1}-\eqref{wknm4.2}, we finally obtain
\begin{align*}
\bigg|\frac{1}{\lambda_n}\int_{t_0}^{t_0+\lambda_n}(\mathfrak{C}(\Z_{\lambda_n}(s)),\mathcal{D}\upvarphi(s,\Z_{\lambda_n}(s)))\d s-
(\mathfrak{C}(\z_0),\mathcal{D}\upvarphi(t_0,\z_0))\bigg|\leq\upsigma(\lambda_n).
\end{align*}
In a similar way, one can establish that (see Appendix \ref{wknCA1} for a detailed explanation) 
\begin{align}\label{wknCA12}
	\bigg|\frac{1}{\lambda_n}\int_{t_0}^{t_0+\lambda_n}&\delta(s)
	\big(\mathfrak{C}(\Z_{\lambda_n}(s)),(\mathcal{A}+\I)\Z_{\lambda_n}(s)\big)\d s-
	\delta(t_0)\big(\mathfrak{C}(\z_0),(\mathcal{A}+\I)\z_0\big)\bigg|
	\leq\upsigma(\lambda_n),
\end{align}
for $r$ given  in Table \ref{Table2}. Finally, we combine the above convergences to establish the supersolution inequality.

\vskip 0.2mm
\noindent
\textbf{Passing limit into \eqref{vdp3.1}: Supersolution inequality.} Let us rewrite \eqref{vdp3.1}, along a subsequence, as follows:
{\small	\begin{align}\label{vdp3.11}
		&\lambda_n-\frac{1}{\lambda_n}\int_{t_0}^{t_0+\lambda_n}\mathpzc{L}(\Z_{\lambda_n}(s),\a_{\lambda_n}(s))\d s
		\nonumber\\&\geq
		-\frac{1}{\lambda_n}\int_{t_0}^{t_0+\lambda_n}\bigg(\upvarphi_t(s,\Z_{\lambda_n}(s))-\big(\mu\mathcal{A}\Z_{\lambda_n}(s)+\mathfrak{B}(\Z_{\lambda_n}(s))+\beta\mathfrak{C}(\Z_{\lambda_n}(s)),\mathcal{D}\upvarphi(s,\Z_{\lambda_n}(s))\big)\bigg)\d s
		\nonumber\\&\quad-
	    \int_{t_0}^{t_0+\lambda_n}\big(\f(s,\a_{\lambda_n}(s)),\mathcal{D}\upvarphi(s,\Z_{\lambda_n}(s))\big)\d s-
		\frac{2}{\lambda_n}\int_{t_0}^{t_0+\lambda_n}\delta(s)
		\big(\f(s,\a_{\lambda_n}(s)),(\mathcal{A}+\I)\Z_{\lambda_n}(s)\big)\d s
		\nonumber\\&\quad
		-\frac{1}{\lambda_n}\int_{t_0}^{t_0+\lambda_n}\delta'(s)\|\Z_{\lambda_n}(s))\|_{\V}^2\d s+\frac{2\mu}{\lambda_n} \int_{t_0}^{t_0+\lambda_n}\delta(s)\big(\mathcal{A}\Z_{\lambda_n}(s),(\mathcal{A}+\I)\Z_{\lambda_n}(s)\big)\d s
		\nonumber\\&\quad+
		\frac{2}{\lambda_n}\int_{t_0}^{t_0+\lambda_n}\delta(s)(\mathfrak{B}(\Z_{\lambda_n}(s)),(\mathcal{A}+\I)\Z_{\lambda_n}(s))\d s+
		\frac{2\beta}{\lambda_n}\int_{t_0}^{t_0+\lambda_n}\delta(s)(\mathfrak{C}(\Z_{\lambda_n}(s)),(\mathcal{A}+\I)\Z_{\lambda_n}(s))\d s.
\end{align}}
On combining \eqref{ss1}-\eqref{pas10} and rearranging the term, we finally arrive at (along a subsequence) the following inequality:
\begin{align}\label{supsoLe}
	&-\psi_t(t_0,\z_0)+\mu\big(\mathcal{A}\z_0,\mathcal{D}\psi(t_0,\z_0)\big)+
	\big(\mathfrak{B}(\z_0),\mathcal{D}\psi(t_0,\z_0)\big)+\beta\big(\mathfrak{C}(\z_0),\mathcal{D}\psi(t_0,\z_0)\big)
	\nonumber\\&\quad+\frac{1}{\lambda_n}
	\bigg[\int_{t_0}^{t_0+\lambda_n}\big[(\f(t_0,\a_{\lambda_n}(s)),-\mathcal{D}\psi(t_0,\z_0))+\mathpzc{L}(\z_0,\a_{\lambda_n}(s))\big]\d s\bigg] 
	\nonumber\\&\leq\upsigma(\lambda_n).
\end{align}
Finally, by taking the infimum over the control $\a\in\U$ inside the integral and then letting $n\to\infty$, we obtain the supersolution inequality. This establishes that $\mathpzc{V}$ is indeed a viscosity solution of the HJBE \eqref{HJBE}, thereby completing the proof.
\end{proof}

	\begin{appendix}\renewcommand{\thesection}{\Alph{section}}
		\numberwithin{equation}{section}
		
		\section{Some useful calculations}\label{useca}
		The aim of this appendix is to justify the convergences given in \eqref{pas9} and \eqref{pas10}. 
		
	\subsection{Explanation of \eqref{pas9} in Theorem \ref{extunqvisc}}\label{wknBA1}
     Let us calculate \eqref{pas9} as $n\to\infty$. We write 
		\begin{align}\label{wknBA}
		&\bigg|\frac{1}{\lambda_n}\int_{t_0}^{t_0+\lambda_n}\delta(s)
		\big(\mathfrak{B}(\Z_{\lambda_n}(s)),\mathcal{A}\Z_{\lambda_n}(s)\big)-
		\delta(t_0)\big(\mathfrak{B}(\z_0),\mathcal{A}\z_0\big)\bigg|\nonumber\\&\leq 
			\frac{1}{\lambda_n}\int_{t_0}^{t_0+\lambda_n}\delta(s)\bigg|\big(\mathfrak{B}(\Z_{\lambda_n}(s)),\mathcal{A}\Z_{\lambda_n}(s)\big)-\big(\mathfrak{B}(\z_0),\mathcal{A}\z_0
			\big)\bigg|\d s
			\nonumber\\&\quad+
			\frac{1}{\lambda_n}\int_{t_0}^{t_0+\lambda_n}\big|\delta(s)-\delta(t_0)\big|
			\big|\big(\mathfrak{B}(\z_0),\mathcal{A}\z_0\big)\big|\d s
			\nonumber\\&\leq
			\underbrace{\frac{1}{\lambda_n}\int_{t_0}^{t_0+\lambda_n}\delta(s)\bigg|\big(\mathfrak{B}(\Z_{\lambda_n}(s))-\mathfrak{B}(\z_0),\mathcal{A}\Z_{\lambda_n}(s)\big)\bigg|\d s}_{\text{$J_4$}}
			\nonumber\\&\quad+
			\underbrace{	 \frac{1}{\lambda_n}\int_{t_0}^{t_0+\lambda_n}\delta(s)\bigg|\big(\mathfrak{B}(\z_0),\mathcal{A}\Z_{\lambda_n}(s)-\mathcal{A}\z_0\big)\bigg|\d s}_{\text{$J_5$}}
			\nonumber\\&\quad+
			\underbrace{ \frac{1}{\lambda_n}\int_{t_0}^{t_0+\lambda_n}|\delta(s)-\delta(t_0)|
				\big|\big(\mathfrak{B}(\z_0),\mathcal{A}\z_0\big)\big|\d s}_{\text{$J_7$}}.
		\end{align}
		From \eqref{agmon} and H\"older's inequality, we estimate $J_4$ as
		\begin{align*}
		J_4&\leq\frac{1}{\lambda_n}\int_{t_0}^{t_0+\lambda_n}\delta(s)\bigg|\big(\mathfrak{B}(\Z_{\lambda_n}(s))-\mathfrak{B}(\z_0),\mathcal{A}\Z_{\lambda_n}(s)\big)\bigg|\d s
		\nonumber\\&\leq
		\frac{1}{\lambda_n}\int_{t_0}^{t_0+\lambda_n}\delta(s)\|\mathfrak{B}(\Z_{\lambda_n}(s)-\z_0,\Z_{\lambda_n}(s))\|_{\H}\|\mathcal{A}\Z_{\lambda_n}(s)\|_{\H}\d s
		\nonumber\\&\quad+
		\frac{1}{\lambda_n}\int_{t_0}^{t_0+\lambda_n}\delta(s)\|\mathfrak{B}(\z_0,\Z_{\lambda_n}(s)-\z_0)\|_{\H}\|\mathcal{A}\Z_{\lambda_n}(s)\|_{\H}\d s
		\nonumber\\&\leq
		\frac{1}{\lambda_n}\int_{t_0}^{t_0+\lambda_n}\delta(s)
		\|\Z_{\lambda_n}(s)-\z_0\|_{\H}^{1-\frac{d}{4}}\|(\mathcal{A}+\I)(\Z_{\lambda_n}(s)-\z_0)\|_{\H}^{\frac{d}{4}}\|\nabla\Z_{\lambda_n}(s)\|_{\H}
		\|\mathcal{A}\Z_{\lambda_n}(s)\|_{\H}\d s
		\nonumber\\&\quad+
		\frac{1}{\lambda_n}\int_{t_0}^{t_0+\lambda_n}\delta(s)\|\z_0\|_{\H}^{1-\frac{d}{4}}\|(\mathcal{A}+\I)\z_0\|_{\H}^{\frac{d}{4}}
		\|\nabla(\Z_{\lambda_n}(s)-\z_0)\|_{\H}\|\mathcal{A}\Z_{\lambda_n}(s)\|_{\H}\d s
		\nonumber\\&\leq
		\frac{C}{\lambda_n}\int_{t_0}^{t_0+\lambda_n}
		\|\Z_{\lambda_n}(s)-\z_0\|_{\H}^{1-\frac{d}{4}}\|(\mathcal{A}+\I)\Z_{\lambda_n}(s)\|_{\H}^{\frac{d}{4}+1}\d s \nonumber\\&\quad+
		\frac{C}{\lambda_n}\int_{t_0}^{t_0+\lambda_n}
		\|\Z_{\lambda_n}(s)-\z_0\|_{\H}^{1-\frac{d}{4}}\|\mathcal{A}\Z_{\lambda_n}(s)\|_{\H}
	    \d s\nonumber\\&\quad+
	    \frac{C}{\lambda_n}\int_{t_0}^{t_0+\lambda_n}
		\|\nabla(\Z_{\lambda_n}(s)-\z_0)\|_{\H}\|\mathcal{A}\Z_{\lambda_n}(s)\|_{\H}\d s
		\nonumber\\&\leq
		C\left(\frac{1}{\lambda_n}\int_{t_0}^{t_0+\lambda_n}
		\|\Z_{\lambda_n}(s)-\z_0\|_{\H}^4 \d s\right)^{\frac{4-d}{8}}  \left(\frac{1}{\lambda_n}\int_{t_0}^{t_0+\lambda_n}
		\|(\mathcal{A}+\I)\Z_{\lambda_n}(s)\|_{\H}^{2}\d s
		\right)^{\frac{4+d}{8}}
		\nonumber\\&\quad+
		C\left(\frac{1}{\lambda_n}\int_{t_0}^{t_0+\lambda_n}
		\|\Z_{\lambda_n}(s)-\z_0\|_{\H}^{2-\frac{d}{2}}\d s\right)^{\frac12}  
		\left(\frac{1}{\lambda_n}\int_{t_0}^{t_0+\lambda_n}
		\|\mathcal{A}\Z_{\lambda_n}(s)\|_{\H}^{2}\d s
		\right)^{\frac12}
		\nonumber\\&\quad+
		C\left(\frac{1}{\lambda_n}\int_{t_0}^{t_0+\lambda_n}
		\|\Z_{\lambda_n}(s)-\z_0\|_{\V}^2 \d s\right)^{\frac{1}{2}}  \left(\frac{1}{\lambda_n}\int_{t_0}^{t_0+\lambda_n}\|\mathcal{A}\Z_{\lambda_n}(s)\|_{\H}^{2}\d s\right)^{\frac{1}{2}},
		\end{align*}
		where the last inequality follows from \eqref{eqn-conv-1},\eqref{eqn-conv-2}, \eqref{ctsdep0.1}-\eqref{ctsdep0.2} and \eqref{vdp7}.
		Similarly, by using H\"older's inequality, \eqref{wknm2}, \eqref{eqn-conv-1} and \eqref{eqn-conv-2}, one can justify the convergence of the terms $J_5$ and $J_7$ in \eqref{wknBA} as $n\to\infty$. With this, we finally establish \eqref{appB}.
		\subsection{Explanation of \eqref{pas10} in Theorem \ref{extunqvisc}}\label{wknCA1}
	     Let us now estimate the limit of the term \eqref{pas10} as $n\to\infty$. We write 
			\begin{align}\label{wknCA}
				&\bigg|\frac{1}{\lambda_n}\int_{t_0}^{t_0+\lambda_n}\delta(s)\big(\mathfrak{C}(\Z_{\lambda_n}(s)),(\mathcal{A}+\I)\Z_{\lambda_n}(s)\big)\d s-
				\delta(t_0)\big(\mathfrak{C}(\z_0),(\mathcal{A}+\I)\z_0\big)\bigg|\nonumber\\&\leq 
				\frac{1}{\lambda_n}\int_{t_0}^{t_0+\lambda_n}\delta(s)\bigg|\big(\mathfrak{C}(\Z_{\lambda_n}(s)),(\mathcal{A}+\I)\Z_{\lambda_n}(s)\big)-\big(\mathfrak{C}(\z_0),(\mathcal{A}+\I)\z_0\big)\bigg|\d s
				\nonumber\\&+
				\frac{1}{\lambda_n}\int_{t_0}^{t_0+\lambda_n}\big|\delta(s)-\delta(t_0)\big|\big|\big(\mathfrak{C}(\z_0),(\mathcal{A}+\I)\z_0\big)\big|\d s
				\nonumber\\&\leq
				\underbrace{\frac{1}{\lambda_n}\int_{t_0}^{t_0+\lambda_n}\delta(s)\bigg|\big(\mathfrak{C}(\Z_{\lambda_n}(s))-\mathfrak{C}(\z_0),(\mathcal{A}+\I)\Z_{\lambda_n}(s)\big)\bigg|\d s}_{\text{$J_4$}}
				\nonumber\\&+
				\underbrace{	 \frac{1}{\lambda_n}\int_{t_0}^{t_0+\lambda_n}\delta(s)\bigg|\big(\mathfrak{C}(\z_0),(\mathcal{A}+\I)\Z_{\lambda_n}(s)-(\mathcal{A}+\I)\z_0\big)\bigg|
				\d s}_{\text{$J_5$}}
				\nonumber\\&+
				\underbrace{ \frac{1}{\lambda_n}\int_{t_0}^{t_0+\lambda_n}|\delta(s)-\delta(t_0)|
				\big|\big(\mathfrak{C}(\z_0),(\mathcal{A}+\I)\z_0\big)\big|\d s}_{\text{$J_7$}}.
			\end{align}
			We now estimate $J_4$, which presents the greatest difficulty among the terms. By applying Taylor's formula along with the H\"older's and interpolation inequalities, we obtain the following estimate for $J_4$:
			\begin{align*}
				J_4&\leq\frac{1}{\lambda_n}\int_{t_0}^{t_0+\lambda_n}\delta(s)\bigg|\big(\mathfrak{C}(\Z_{\lambda_n}(s))-\mathfrak{C}(\z_0),(\mathcal{A}+\I)\Z_\lambda(s)\big)\bigg|\d s
				\nonumber\\&\leq
				\frac{C}{\lambda_n}\int_{t_0}^{t_0+\lambda_n}
				\||\Z_{\lambda_n}(s)|^{r-1}+|\z_0|^{r-1}\|_{\wi\L^3}
				\|\Z_{\lambda_n}(s)-\z_0\|_{\wi\L^6}
				\|(\mathcal{A}+\I)\Z_{\lambda_n}(s)\|_{\H}\d s
				\nonumber\\&\leq
				\frac{C}{\lambda_n}\int_{t_0}^{t_0+\lambda_n}
				\left(\|\Z_{\lambda_n}(s)\|_{\wi\L^{3(r-1)}}^{r-1}+\|\z_0\|_{\wi\L^{3(r-1)}}^{r-1}\right)\|\Z_{\lambda_n}(s)-\z_0\|_{\wi\L^6}
				\|(\mathcal{A}+\I)\Z_{\lambda_n}(s)\|_{\H}\d s 
			\end{align*}
			
	We now consider following cases:
	\vskip 2mm
	\noindent
	\textbf{Case-I of in Table \ref{Table2}.}
	The application of Sobolev embedding $\V\hookrightarrow\wi\L^{r+1}$, for any $r\in(3,\infty)$, H\"older's inequality, uniform energy estimates \eqref{eqn-conv-1} and \eqref{eqn-conv-2}, and continuous dependence estimate \eqref{ctsdep0.2} yields the following estimate:
	\begin{align*}
			J_4&\leq\frac{1}{\lambda_n}\int_{t_0}^{t_0+\lambda_n}\delta(s)\bigg|\big(\mathfrak{C}(\Z_{\lambda_n}(s))-\mathfrak{C}(\z_0),(\mathcal{A}+\I)\Z_\lambda(s)\big)\bigg|\d s
			\nonumber\\&\leq
			\frac{C}{\lambda_n}\int_{t_0}^{t_0+\lambda_n}
			\left(\|\Z_{\lambda_n}(s)\|_{\V}^{r-1}+\|\z_0\|_{\V}^{r-1}\right)\|\Z_{\lambda_n}(s)-\z_0\|_{\V}
			\|(\mathcal{A}+\I)\Z_{\lambda_n}(s)\|_{\H}\d s
			\nonumber\\&\leq
			\frac{C}{\lambda_n}\int_{t_0}^{t_0+\lambda_n}
			\|\Z_{\lambda_n}(s)-\z_0\|_{\V}
			\|(\mathcal{A}+\I)\Z_{\lambda_n}(s)\|_{\H}\d s
			\nonumber\\&\leq C
			\left(\int_{t_0}^{t_0+\lambda_n}
			\|(\mathcal{A}+\I)\Z_{\lambda_n}(s)\|_{\H}^{2}\d s\right)^{\frac12}
			\left(\frac{1}{\lambda_n}\int_{t_0}^{t_0+\lambda_n}
			\|\Z_{\lambda_n}(s)-\z_0\|_{\V}^2\d s \right)^{\frac12}
			\nonumber\\&\leq\upsigma(\lambda_n).
	\end{align*}
    
	\vskip 2mm
	\noindent
		\textbf{Case-II of in Table \ref{Table2}.} By using the Sobolev embedding $\V\hookrightarrow\wi\L^{r+1}$ for any $r\in(3,5)$ and similar calculations as we did above, we find
	{\small	\begin{align*}
			&J_4\leq\frac{C}{\lambda_n}\int_{t_0}^{t_0+\lambda_n}
			\|\Z_{\lambda_n}(s)\|_{\wi\L^{3(r-1)}}^{r-1}\|\Z_{\lambda_n}(s)-\z_0\|_{\V}
			\|(\mathcal{A}+\I)\Z_{\lambda_n}(s)\|_{\H}\d s
			\nonumber\\&\quad+
			\frac{C}{\lambda_n}\int_{t_0}^{t_0+\lambda_n}
			\|\Z_{\lambda_n}(s)-\z_0\|_{\V}\|(\mathcal{A}+\I)\Z_{\lambda_n}(s)\|_{\H}
			\d s
			\nonumber\\&\leq
			\frac{C}{\lambda_n}\int_{t_0}^{t_0+\lambda_n}
			\|\Z_{\lambda_n}(s)\|_{\wi\L^{r+1}}\|\Z_{\lambda_n}(s)\|_{\wi\L^{3(r+1)}}^{r-2}\|\Z_{\lambda_n}(s)-\z_0\|_{\V}
			\|(\mathcal{A}+\I)\Z_{\lambda_n}(s)\|_{\H}\d s
			\nonumber\\&\quad+
			\frac{C}{\lambda_n}\int_{t_0}^{t_0+\lambda_n}
			\|\Z_{\lambda_n}(s)-\z_0\|_{\V}\|(\mathcal{A}+\I)\Z_{\lambda_n}(s)\|_{\H}\d s
			\nonumber\\&\leq C
			\bigg[\left(\int_{t_0}^{t_0+\lambda_n}\|\Z_{\lambda_n}(s)\|_{\wi\L^{3(r+1)}}^{r+1}\d s\right)^{\frac{r-2}{r+1}}
			\left(\frac{1}{\lambda_n}\int_{t_0}^{t_0+\lambda_n}
			\|\Z_{\lambda_n}(s)-\z_0\|_{\V}^{\frac{r+1}{5-r}}\d s \right)^{\frac{5-r}{r+1}}
			\nonumber\\&\quad+
			\left(\frac{1}{\lambda_n}\int_{t_0}^{t_0+\lambda_n}
			\|\Z_{\lambda_n}(s)-\z_0\|_{\V}^2\d s \right)^{\frac12}\bigg]\left(\int_{t_0}^{t_0+\lambda_n}
			\|(\mathcal{A}+\I)\Z_{\lambda_n}(s)\|_{\H}^{2}\d s\right)^{\frac12}
			\nonumber\\&\leq\upsigma(\lambda_n).
	\end{align*}}

	   The convergence of other terms in \eqref{wknCA} can be shown similarly as we have demonstrated for the convective term in \eqref{wknBA}.

   \section{Existence of viscosity solution for $r=5$}\label{casere5}
   In this appendix, we demonstrate the existence of a viscosity solution for the HJBE \eqref{detHJB1} when $r=5$. Specifically, we prove the existence of the limit specified in \eqref{pas10} as $n\to\infty$ for the case where the absorption exponent $r=5$. It represents a significant advantage of our work compared to the stochastic case (see \cite{smtm1}), where we were not able establish the case for $r=5$. 
   
   \subsection{Energy estimates in $\D(\mathcal{A})$}\label{daengca}
  In this section, we establish the energy estimates for $\D(\mathcal{A})$, which is useful to justify the limit \eqref{pas10} in the proof of the existence of viscosity solutions.
   \begin{proposition}\label{DAENG} 
   		Assume $\Z(\cdot)=\Z(\cdot;t,\z,\a(\cdot))$ is a strong solution to the system \eqref{stap} with $\z\in\D(\mathcal{A})$. Then, for $r$ given in Table \ref{Table2}, the following uniform energy estimate holds: 
   		\begin{align*}
   			&\|\mathcal{A}\Z(s)\|_{\H}^2+
   			\int_t^{s}\|\mathcal{A}^{\frac32}\Z(\tau)\|_{\H}^2\d\tau
   			\nonumber\\&\leq 
   			C\left(\mu,\beta,T,\|\z\|_{\D(\mathcal{A})},\sup\limits_{\tau\in[t,T]} \|\Z(\tau)\|_{\V},\int_0^T\|\Z(s)\|_{\wi\L^{3(r+1)}}^{r+1},\int_t^{T}\|\f(\tau,\a(\tau))\|_{\V}^2\d\tau\right),
   	\end{align*}
   for all $s\in[t, T]$.
   \end{proposition}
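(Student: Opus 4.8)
The plan is to test the abstract equation \eqref{stap} against $\mathcal{A}^2\Z(s)$ in $\H$. Since $\mathcal{A}$ and $\mathcal{P}$ commute on $\mathbb{T}^d$, this is legitimized by carrying out the computation on the spectral Galerkin approximations of \eqref{stap} and passing to the limit using the a priori bounds of Proposition \ref{weLLp}(ii). Using $\big(\tfrac{\d\Z}{\d s},\mathcal{A}^2\Z\big)=\tfrac12\frac{\d}{\d s}\|\mathcal{A}\Z\|_{\H}^2$ and $(\mathcal{A}\Z,\mathcal{A}^2\Z)=\|\mathcal{A}^{3/2}\Z\|_{\H}^2$, one obtains
\begin{align*}
\tfrac12\frac{\d}{\d s}\|\mathcal{A}\Z(s)\|_{\H}^2+\mu\|\mathcal{A}^{3/2}\Z(s)\|_{\H}^2&=-(\mathcal{A}^{1/2}\mathfrak{B}(\Z(s)),\mathcal{A}^{3/2}\Z(s))-\beta(\mathcal{A}^{1/2}\mathfrak{C}(\Z(s)),\mathcal{A}^{3/2}\Z(s))\\
&\quad+(\mathcal{A}^{1/2}\f(s,\a(s)),\mathcal{A}^{3/2}\Z(s)).
\end{align*}
The goal is to estimate the right-hand side so that $\tfrac{\mu}{2}\|\mathcal{A}^{3/2}\Z\|_{\H}^2$ is retained on the left and one is left with a linear differential inequality $\frac{\d}{\d s}\|\mathcal{A}\Z\|_{\H}^2+\mu\|\mathcal{A}^{3/2}\Z\|_{\H}^2\le g(s)\|\mathcal{A}\Z\|_{\H}^2+h(s)$ in which $g,h\in\mathrm{L}^1(t,T)$; this is closed by Grönwall's inequality together with $\|\mathcal{A}\Z(t)\|_{\H}^2\le\|\z\|_{\D(\mathcal{A})}^2$.

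The forcing term is immediate: $\|\mathcal{A}^{1/2}\f\|_{\H}=\|\nabla\f\|_{\H}\le\|\f\|_{\V}$, so Young's inequality gives $|(\mathcal{A}^{1/2}\f,\mathcal{A}^{3/2}\Z)|\le\tfrac{\mu}{8}\|\mathcal{A}^{3/2}\Z\|_{\H}^2+C\|\f(s,\a(s))\|_{\V}^2$, the last term going into $h$. For the Navier--Stokes term, since $\mathcal{P}$ commutes with $\nabla$ on $\mathbb{T}^d$ one has $\|\mathcal{A}^{1/2}\mathfrak{B}(\Z)\|_{\H}\le\|\nabla((\Z\cdot\nabla)\Z)\|_{\H}\lesssim\|\nabla\Z\|_{\wi\L^4}^2+\|\Z\|_{\wi\L^{\infty}}\|\mathcal{A}\Z\|_{\H}$; applying Agmon's inequality \eqref{agmon}, the interpolation (Gagliardo--Nirenberg) inequalities, and Young, this contribution is absorbed as $\tfrac{\mu}{8}\|\mathcal{A}^{3/2}\Z\|_{\H}^2+g_1(s)\|\mathcal{A}\Z\|_{\H}^2+h_1(s)$, where $g_1(s)=C\big(1+\|\nabla\Z(s)\|_{\H}\|\mathcal{A}\Z(s)\|_{\H}+\|\Z(s)\|_{\H}^{1/2}\|\mathcal{A}\Z(s)\|_{\H}^{3/2}\big)$ (with more room when $d=2$), which lies in $\mathrm{L}^1(t,T)$ by \eqref{eqn-conv-2} and Hölder's inequality in time; alternatively one may use the fractional Leibniz rule \eqref{fracLeb}.

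The Forchheimer term is the crux. Using $\mathcal{P}\nabla=\nabla\mathcal{P}$ and the pointwise bound $|\nabla(|\Z|^{r-1}\Z)|\le r\,|\Z|^{r-1}|\nabla\Z|$ gives $\|\mathcal{A}^{1/2}\mathfrak{C}(\Z)\|_{\H}\lesssim\||\Z|^{r-1}\nabla\Z\|_{\H}$, and Hölder's inequality ($\tfrac12=\tfrac13+\tfrac16$) with $\V\hookrightarrow\wi\L^6$ yields $\||\Z|^{r-1}\nabla\Z\|_{\H}\le\|\Z\|_{\wi\L^{3(r-1)}}^{r-1}\|\nabla\Z\|_{\wi\L^6}\lesssim\|\Z\|_{\wi\L^{3(r-1)}}^{r-1}\big(1+\|\mathcal{A}\Z\|_{\H}\big)$; hence, after Young, $\beta|(\mathcal{A}^{1/2}\mathfrak{C}(\Z),\mathcal{A}^{3/2}\Z)|\le\tfrac{\mu}{8}\|\mathcal{A}^{3/2}\Z\|_{\H}^2+g_2(s)\|\mathcal{A}\Z\|_{\H}^2+h_2(s)$ with $g_2(s)=C\|\Z(s)\|_{\wi\L^{3(r-1)}}^{2(r-1)}$ and $h_2(s)=C\|\Z(s)\|_{\wi\L^{3(r-1)}}^{2(r-1)}$. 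The whole scheme hinges on $g_2,h_2\in\mathrm{L}^1(t,T)$: interpolating $\wi\L^{3(r-1)}$ between $\wi\L^6$ (which is $\V$-, hence uniformly, bounded) and $\wi\L^{3(r+1)}$ and using $\int_0^T\|\Z(s)\|_{\wi\L^{3(r+1)}}^{r+1}\d s<\infty$ from Definition \ref{def-var-strong}, a short exponent computation (reducing to $(r-5)(r+1)\le0$) shows $\int_t^T\|\Z(s)\|_{\wi\L^{3(r-1)}}^{2(r-1)}\d s<\infty$ exactly for the values of $r$ in Table \ref{Table2} — in $d=3$ precisely the ceiling $r\le5$, while in $d=2$ it holds for every $r>3$ through $\V\hookrightarrow\wi\L^p$. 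The case $r=5$ is admissible here because the estimate is pathwise and Grönwall's inequality applies, in contrast with the second-order stochastic HJBE of \cite{smtm1}.

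Collecting the three estimates gives the differential inequality with $g=g_1+g_2\in\mathrm{L}^1(t,T)$ and $h=h_1+h_2+C\|\f(\cdot,\a(\cdot))\|_{\V}^2\in\mathrm{L}^1(t,T)$, whose $\mathrm{L}^1$-norms are controlled by $\sup_{\tau\in[t,T]}\|\Z(\tau)\|_{\V}$, $\int_0^T\|\Z(s)\|_{\wi\L^{3(r+1)}}^{r+1}\d s$ and $\int_t^T\|\f(\tau,\a(\tau))\|_{\V}^2\d\tau$. Grönwall's inequality then yields the bound on $\sup_{s\in[t,T]}\|\mathcal{A}\Z(s)\|_{\H}^2$, and integrating the inequality over $[t,s]$ gives the bound on $\int_t^s\|\mathcal{A}^{3/2}\Z(\tau)\|_{\H}^2\d\tau$, both with a constant of the announced form; the smoothness used in the formal step is removed via the Galerkin limit and weak lower-semicontinuity of the norms. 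The main obstacle is exactly the Forchheimer estimate: extracting a full dissipative gain $\mu\|\mathcal{A}^{3/2}\Z\|_{\H}^2$ out of $(\mathcal{A}^{1/2}\mathfrak{C}(\Z),\mathcal{A}^{3/2}\Z)$ while leaving only time-integrable coefficients, which is what forces the restriction $r\le5$ in three dimensions.
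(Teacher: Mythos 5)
Your proposal is correct and follows essentially the same route as the paper: test the equation against $\mathcal{A}^2\Z$, shift half-powers of $\mathcal{A}$ onto $\mathfrak{B}(\Z)$ and $\mathfrak{C}(\Z)$, absorb the dissipative term, bound the Forchheimer contribution through $\|\Z\|_{\wi\L^{3(r-1)}}^{r-1}$, interpolate against the $\mathrm{L}^{r+1}(t,T;\wi\L^{3(r+1)})$ bound of the strong solution to obtain an $\mathrm{L}^1$-in-time Grönwall coefficient, and observe that this forces $r\le 5$ in $d=3$. The only (immaterial) deviations are that you interpolate $\wi\L^{3(r-1)}$ between $\wi\L^{6}$ and $\wi\L^{3(r+1)}$ rather than between $\wi\L^{r+1}$ and $\wi\L^{3(r+1)}$, and you let part of the convective estimate enter the Grönwall coefficient $g$ instead of the inhomogeneity $h$ as in \eqref{daeng2}.
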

   
   \begin{proof}
   According to \cite[Theorem 4.4]{SMTM}, if $\f\in\W^{1,1}(0,T;\H)$ and $\z\in\D(\mathcal{A})$, then \eqref{stap}  admits a unique strong solution $\Z$ such that 
   $$\Z\in\W^{1,\infty}([t,T];\H)\ \text{ and }\ \mathcal{A}\Z\in\mathrm{L}^{\infty}(t,T;\H).$$ For $\f\in\mathrm{L}^2(t,T;\V)\subset \mathrm{L}^2(t,T;\H)$, one can use the density of $\W^{1,1}(t,T;\H)$ in $\mathrm{L}^2(t,T;\H)$  to obtain 
   $$\Z \in\mathrm{L}^{\infty}(t,T;\D(\mathcal{A}))\cap\mathrm{L}^{2}(t,T;\D(\mathcal{A}^{\frac{3}{2}})),$$  such that $\Z_t\in\mathrm{L}^2(t,T;\H).$ 
   Therefore, by using the absolute continuity lemma (\cite[Lemma 1.2, Chapter III]{Te}), we take the inner product with $\mathcal{A}^2\Z$ in \eqref{stap} to see that 
   	\begin{align}\label{daeng}
   		&\|\mathcal{A}\Z(s)\|_{\H}^2+\mu
   		\int_t^{s}\|\mathcal{A}^{\frac32}\Z(\tau)\|_{\H}^2\d\tau
   		\nonumber\\&= \|\mathcal{A}\z\|_{\H}^2 -\int_t^{s}\big(\mathfrak{B}(\Z(\tau)),\mathcal{A}^2\Z(\tau)\big)\d\tau
   		-\beta \int_t^{s}
   		\big(\mathfrak{C}(\Z(\tau)),\mathcal{A}^2\Z(\tau)\big)\d\tau
   		\nonumber\\&\quad+ \int_t^{s}\big(\f(\tau,\a(\tau)),\mathcal{A}^2\Z(\tau)\big)\d\tau,
   	\end{align}
   	for all $s\in[t,T]$. From Young's inequality, equality \eqref{torusequ} and the estimate \eqref{syymB3}, we calculate following:
   	\begin{align}\label{daeng1}
   		\big(\f,\mathcal{A}^2\Z\big)=\big(\mathcal{A}^{\frac{1}{2}}\f,
   		\mathcal{A}^{\frac{3}{2}}\Z\big)
   		\leq
   		\|\mathcal{A}^{\frac{1}{2}}\f\|_{\H}\|\mathcal{A}^{\frac{3}{2}}\Z\|_{\H}
   		\leq
   		\frac{\mu}{4}\|\mathcal{A}^{\frac{3}{2}}\Z\|_{\H}^2+
   		\frac{1}{\mu}\|\f\|_{\V}^2.
   	\end{align}
   	Let us now estimate the terms containing bilinear and nonlinear operators in the right hand side of \eqref{daeng}. We first estimate $(\mathfrak{B}(\Z),\mathcal{A}^2\Z)$, by using \eqref{fracLeb}, H\"older's and Young's inequalities as
   	\begin{align}\label{daeng2}
   		\big|(\mathfrak{B}(\Z),\mathcal{A}^2\Z)\big|&=
   		\big|(\mathcal{A}^{\frac{1}{2}}\mathfrak{B}(\Z),\mathcal{A}^{\frac{3}{2}}\Z)\big|
   		\nonumber\\&\leq
   		\|\mathcal{A}^{\frac{1}{2}}\mathfrak{B}(\Z)\|_{\H}\|\mathcal{A}^{\frac{3}{2}}\Z\|_{\H}
   		\nonumber\\&\leq C
   		\|\Z\|_{\wi\L^6}\|\mathcal{A}\Z\|_{\wi\L^3}
   		\|\mathcal{A}^{\frac{3}{2}}\Z\|_{\H}
   		\nonumber\\&\leq C
   		\|\Z\|_{\wi\L^6}\|\mathcal{A}^{\frac{5}{4}}\Z\|_{\H}
   		\|\mathcal{A}^{\frac{3}{2}}\Z\|_{\H}
   		\nonumber\\&\leq C
   		\|\Z\|_{\wi\L^6}\|\mathcal{A}^{\frac12}\Z\|_{\H}^{\frac{1}{4}}
   		\|\mathcal{A}^{\frac{3}{2}}\Z\|_{\H}^{\frac{7}{4}}
   		\nonumber\\&\leq 
   		\frac{\mu}{8}\|\mathcal{A}^{\frac{3}{2}}\Z\|_{\H}^2
   		+C\|\Z\|_{\V}^{10}.
   	\end{align}
   	Similarly, we calculate $(\mathfrak{C}(\Z),\mathcal{A}^2\Z)$ as 
   	\begin{align}\label{daeng3}
   		\big|(\mathfrak{C}(\Z),\mathcal{A}^2\Z)\big|&\leq C
   		\|\Z\|_{\wi\L^{3(r-1)}}^{r-1}\|\mathcal{A}^{\frac{1}{2}}\Z\|_{\wi\L^6}
   		\|\mathcal{A}^{\frac{3}{2}}\Z\|_{\H}
   		\nonumber\\&\leq
   		C\|\Z\|_{\wi\L^{r+1}}^2 \|\Z\|_{\wi\L^{3(r+1)}}^{2(r-2)}
   		(\|\mathcal{A}\Z\|_{\H}^2+\|\mathcal{A}^{\frac12}\Z\|_{\H}^2)
   	+	\frac{\mu}{8}\|\mathcal{A}^{\frac{3}{2}}\Z\|_{\H}^2.
   	\end{align}
   	Combining \eqref{daeng1}-\eqref{daeng3} in the equality \eqref{daeng} gives the following estimate:
   	\begin{align}\label{daeng4}
   		&\|\mathcal{A}\Z(s)\|_{\H}^2+\frac{\mu}{2}
   		\int_t^{s}\|\mathcal{A}^{\frac32}\Z(\tau)\|_{\H}^2\d\tau
   		\nonumber\\&\leq\|\mathcal{A}\z\|_{\H}^2+ C\int_t^{s}\|\Z(\tau)\|_{\V}^{10}\d\tau
   		+C\int_t^{s}
   		\|\Z(\tau)\|_{\wi\L^{r+1}}^2 \|\Z(\tau)\|_{\wi\L^{3(r+1)}}^{2(r-2)}
   		\|\mathcal{A}\Z(\tau)\|_{\H}^2\d\tau
   		\nonumber\\&\quad+C\int_t^{s}
   		\|\Z(\tau)\|_{\wi\L^{r+1}}^2 \|\Z(\tau)\|_{\wi\L^{3(r+1)}}^{2(r-2)}
   		\|\mathcal{A}^{\frac12}\Z(\tau)\|_{\H}^2\d\tau
   		+\frac{1}{\mu}\int_t^{s}
   		\|\f(\tau,\a(\tau))\|_{\V}^2\d\tau,
   	\end{align}
   	for all $s\in[t,T]$. 
   	To proceed further, we consider following two cases:
   	
   	\vskip 0.2mm
   	\noindent
   	\textbf{Case-I of Table \ref{Table2}.} By using $\V\hookrightarrow\wi\L^{r+1}$ for any $r\geq1$ and \eqref{eqn-conv-1} and \eqref{eqn-conv-2}, inequality \eqref{daeng4} yields
   	\begin{align}\label{daeng5}
   		&\|\mathcal{A}\Z(s)\|_{\H}^2+\frac{\mu}{2}
   		\int_t^{s}\|\mathcal{A}^{\frac32}\Z(\tau)\|_{\H}^2\d\tau
   		\nonumber\\&\leq C+\|\mathcal{A}\z\|_{\H}^2+\frac{1}{\mu}\int_t^{s}
   		\|\f(\tau,\a(\tau))\|_{\V}^2\d\tau+C\int_t^{s}\|\mathcal{A}\Z\|_{\H}^2\d\tau.
   	\end{align}
   	On employing Gr\"onwall's inequality, we obtain
   	\begin{align*}
   		&\sup\limits_{s\in[t,T]}\|\mathcal{A}\Z(s)\|_{\H}^2+
   		\int_t^{T}\|\mathcal{A}^{\frac32}\Z(\tau)\|_{\H}^2\d\tau
   		\nonumber\\&\leq C\left(\mu,\beta,T,\|\z\|_{\D(\mathcal{A})},\sup\limits_{\tau\in[t,T]} \|\Z(\tau)\|_{\V},\int_t^{T}\|\f(\tau,\a(\tau))\|_{\V}^2\d\tau\right).
   	\end{align*}
   	\vskip 0.2mm
   	\noindent
   	\textbf{Case-II and Case-III of Table \ref{Table2}.} Using  the energy estimates \eqref{eqn-conv-1} and \eqref{eqn-conv-2}, inequality \eqref{daeng} reduces to 
   	\begin{align}\label{daeng6}
   		&\|\mathcal{A}\Z(s)\|_{\H}^2+\frac{\mu}{2}
   		\int_t^{s}\|\mathcal{A}^{\frac32}\Z(\tau)\|_{\H}^2\d\tau
   		\nonumber\\&\leq 
   		C+\|\mathcal{A}\z\|_{\H}^2+\frac{1}{\mu}\int_t^{s}
   		\|\f(\tau,\a(\tau))\|_{\V}^2\d\tau+C\int_t^{s}
   		\|\Z(\tau)\|_{\wi\L^{r+1}}^2 \|\Z(\tau)\|_{\wi\L^{3(r+1)}}^{2(r-2)}\d\tau
   		\nonumber\\&\quad+
   		C\int_t^{s}
   		\|\Z(\tau)\|_{\wi\L^{r+1}}^2 \|\Z(\tau)\|_{\wi\L^{3(r+1)}}^{2(r-2)}
   		\|\mathcal{A}\Z(\tau)\|_{\H}^2\d\tau,
   	\end{align}
   	for all $s\in[t,T]$. By using H\"older's inequality and utilizing the uniform energy estimates \eqref{eqn-conv-1} and \eqref{eqn-conv-2}, one can show the following bound:
   	\begin{align}\label{daeng7}
   		&\int_t^{T}
   		\|\Z(\tau)\|_{\wi\L^{r+1}}^2 \|\Z(\tau)\|_{\wi\L^{3(r+1)}}^{2(r-2)}\d\tau
   		\nonumber\\&=
   		\left\{
   		\begin{aligned}
   			&\left(\int_t^T\|\Z(\tau)\|_{\wi\L^{r+1}}^{r+1}\d\tau\right)^{\frac{2}{r+1}}
   			\left(\int_t^T\|\Z(\tau)\|_{\wi\L^{3(r+1)}}^{r+1}\d\tau\right)^{\frac{2(r-2)}{r+1}}, \  &&\text{when} \ r\in(3,5),\\
   			&\sup_{\tau\in [t,T]}	\|\Z(\tau)\|_{\wi\L^6}^2\int_t^{T}
   		 \|\Z(\tau)\|_{\wi\L^{18}}^{6}\d\tau,
   			\  &&\text{when} \ r=5.
   		\end{aligned}
   		\right.
   		\nonumber\\&\leq C. 
   	\end{align}
   	Therefore, on employing Gr\"onwall's inequality together with \eqref{daeng7}, we conclude from \eqref{daeng6} that 
   	\begin{align*}
   		&\|\mathcal{A}\Z(s)\|_{\H}^2+
   		\int_t^{s}\|\mathcal{A}^{\frac32}\Z(\tau)\|_{\H}^2\d\tau
   		\nonumber\\&\leq 
   		C\left(\mu,\beta,T,\|\z\|_{\D(\mathcal{A})},\sup\limits_{\tau\in[t,T]} \|\Z(\tau)\|_{\V},\int_0^T\|\Z(s)\|_{\wi\L^{3(r+1)}}^{r+1},\int_t^{T}\|\f(\tau,\a(\tau))\|_{\V}^2\d\tau\right),
   	\end{align*}
   	for all $s\in[t,T]$.
   \end{proof}

  \subsection{Justification of the term $J_4$ in \eqref{wknCA} as $n\to\infty$ for $r=5$}\label{dar5}
	By using Sobolev's embedding $\V\hookrightarrow\wi\L^{6}$, H\"older's inequality, uniform energy estimates \eqref{eqn-conv-1} and \eqref{eqn-conv-2}, and continuous dependence estimate \eqref{ctsdep0.2}, we calculate 
	\begin{align*}
		&J_4\leq\frac{C}{\lambda_n}\int_{t_0}^{t_0+\lambda_n}
		\|\Z_{\lambda_n}(s)\|_{\wi\L^{12}}^{4}\|\Z_{\lambda_n}(s)-\z_0\|_{\V}
		\|(\mathcal{A}+\I)\Z_{\lambda_n}(s)\|_{\H}\d s
		\nonumber\\&\quad+
		\frac{C}{\lambda_n}\int_{t_0}^{t_0+\lambda_n}
		\|\Z_{\lambda_n}(s)-\z_0\|_{\V}\|(\mathcal{A}+\I)\Z_{\lambda_n}(s)\|_{\H}
		\d s
		\nonumber\\&\leq
		\frac{C}{\lambda_n}\int_{t_0}^{t_0+\lambda_n}
		\|\Z_{\lambda_n}(s)\|_{\wi\L^{6}}\|\Z_{\lambda_n}(s)\|_{\wi\L^{18}}^{3}\|\Z_{\lambda_n}(s)-\z_0\|_{\V}
		\|(\mathcal{A}+\I)\Z_{\lambda_n}(s)\|_{\H}\d s
		\nonumber\\&\quad+
		\frac{C}{\lambda_n}\int_{t_0}^{t_0+\lambda_n}
		\|\Z_{\lambda_n}(s)-\z_0\|_{\V}\|(\mathcal{A}+\I)\Z_{\lambda_n}(s)\|_{\H}\d s
		\nonumber\\&\leq C
		\bigg[\left(\int_{t_0}^{t_0+\lambda_n}\|\Z_{\lambda_n}(s)\|_{\wi\L^{18}}^{6}\d s\right)^{\frac{1}{2}}
		\left(\frac{1}{\lambda_n}\int_{t_0}^{t_0+\lambda_n}
		\|\Z_{\lambda_n}(s)-\z_0\|_{\V}^{2}\d s \right)^{\frac{1}{2}}
		\nonumber\\&\quad+
		\left(\frac{1}{\lambda_n}\int_{t_0}^{t_0+\lambda_n}
		\|\Z_{\lambda_n}(s)-\z_0\|_{\V}^2\d s \right)^{\frac12}\bigg]\left(\int_{t_0}^{t_0+\lambda_n}
		\|(\mathcal{A}+\I)\Z_{\lambda_n}(s)\|_{\H}^{2}\d s\right)^{\frac12}
		\nonumber\\&\leq\upsigma(\lambda_n).
\end{align*}
With this, we finally show that \eqref{wknCA12} holds for $r=5$. As a consequence, the supersolution inequality \eqref{supsoLe} holds true for $r=5$ also, thus proving the existence of viscosity solution for $r=5$.
	\end{appendix}

	\medskip
	\noindent
	\textbf{Acknowledgments:} The first author gratefully acknowledges the financial support provided by the Ministry of Education, Government of India (MHRD). M. T. Mohan's research is supported by the National Board of Higher Mathematics (NBHM), Department of Atomic Energy, Government of India, under Project No. 02011/13/2025/NBHM(R.P)/R\&D II/1137.

	\medskip\noindent	\textbf{Declarations:} 
	
	\noindent 	\textbf{Ethical Approval:}   Not applicable 
	
	
	\noindent  \textbf{Conflict of interest: }On behalf of all authors, the corresponding author states that there is no conflict of interest.
	
	\noindent 	\textbf{Authors' contributions: } All authors have contributed equally. 
	
	\noindent 	\textbf{Funding: } NBHM, India, 02011/13/2025/NBHM(R.P)/R\&D II/1137 (M. T. Mohan)
	
	\noindent 	\textbf{Availability of data and materials: } Not applicable.

\end{document}